\tikzset{>=stealth}
\newtheorem{mydef-no-caption}{Definition}
	{\begin{mydef-no-caption}{\ifnotmtarg{#1}{\textnormal{(\textbf{#1})}~}}}%
	{\end{mydef-no-caption}}
\theoremstyle{plain}
\newtheorem{thm}{Theorem}[section]
\newtheorem{prop}[thm]{Proposition}
\newtheorem{lem}[thm]{Lemma}
\newtheorem{cor}[thm]{Corollary}
\theoremstyle{definition}
\newtheorem{rmk}[thm]{Remark}
\newtheorem{defn}[thm]{Definition}
\numberwithin{equation}{section}
\renewcommand{\bar}{\overline}
\newcommand{\N}{{\mathbb N}}
\newcommand{\R}{{\mathbb R}}
\newcommand{\lan}{\langle}
\newcommand{\ran}{\rangle}
\newcommand{\pa}{\partial}
\newcommand{\bsls}{\backslash}
\newcommand{\tr}{\mathrm{tr}}
\newcommand{\Div}{\mathrm{div}}
\newcommand{\D}{\nabla}
\newcommand{\bigO}{\mathcal{O}}
\newcommand{\area}{\mathrm{Area}}
\newcommand{\Ric}{\mathrm{Ric}}
\newcommand{\dist}{\mathrm{dist}}
\newcommand{\gr}{\mathrm{Graph}}
\newcommand{\fgr}{\mathfrak{Graph}}
\newcommand{\cA}{\mathcal{A}}
\newcommand{\cC}{\mathcal{C}}
\newcommand{\fM}{\mathfrak{M}}
\newcommand{\fg}{\mathfrak{g}}
\newcommand{\fD}{\mathfrak{D}}
\newcommand{\rH}{\mathrm{H}}
\newcommand{\rK}{\mathrm{K}}
\newcommand{\cN}{\mathcal{N}}
\newcommand{\cL}{\mathcal{L}}
\newcommand{\cF}{\mathcal{F}}
\newcommand{\cG}{\mathcal{G}}
\newcommand{\cP}{\mathcal{P}}
\newcommand{\rR}{\mathrm{R}}
\newcommand{\cS}{\mathcal{S}}
\newcommand{\cT}{\mathcal{T}}
\newcommand{\al}{\alpha}
\newcommand{\ga}{\gamma}
\newcommand{\Ga}{\Gamma}
\newcommand{\la}{\lambda}
\newcommand{\La}{\Lambda}
\newcommand{\si}{\sigma}
\newcommand{\Si}{\Sigma}
\newcommand{\De}{\Delta}
\newcommand{\de}{\delta}
\newcommand{\ka}{\kappa}
\newcommand{\vare}{\varepsilon}
\newcommand{\om}{\omega}
\newcommand{\Om}{\Omega}
\renewcommand{\th}{\theta}
\newcommand{\Th}{\Theta}
\newcommand{\Up}{\Upsilon}
\DeclarePairedDelimiter\abs{\lvert}{\rvert}
\DeclarePairedDelimiter\norm{\lVert}{\rVert}
\title{On blowup of regularized solutions to Jang equation and constant expansion surfaces}
\author{Kai-Wei Zhao}
\date{}
\begin{document}
\begin{abstract}
In this paper, we analyze the blowup behavior of regularized solutions to Jang equation inside apparent horizons. This extends the analyses outside apparent horizons done by Schoen-Yau in \cite{SY2}. We will take two natural geometric treatments to blowup sequences: dilation and translation. First, we show that the limits of properly translated solutions are constant expansion surfaces. Second, we characterize the limits of properly rescaled solutions. Third, we discuss the structure of blowup regions enclosed by apparent horizons. Lastly, we elaborate on a special case of low-speed blowup. 
\end{abstract}

\maketitle

\section{Introduction}
\subsection{Initial data set and apparent horizon}
An \emph{initial data set} is a triple $(M,g,k)$ where $M$ is a complete smooth 3-manifold without boundary, equipped with a Riemannian metric $g$ and a symmetric $(0,2)$-tensor $k$. In general relativity, initial data sets arises as embedded spacelike hypersurfaces of time-orientable Lorentzian manifolds $(\fM, \fg)$, referred to as spacetime, with induced metric $g$ and second fundamental form $k$. The constraint equations defines \emph{local mass density} $\mu$ and the \emph{local current desity} $J$ of an initial data set $(M,g,k)$ as 
\begin{align*}
    \mu: = \frac{1}{2}\big(R_g - \abs{k}^2_g + \tr_g(k)^2\big),\quad J:= \Div\big(k - \tr_g(k) g\big)
\end{align*}
where $R_g$ denotes the scalar curvature of $(M,g)$. An initial data set $(M,g,k)$ satisfies \emph{dominant energy condition} if $\mu\geq \abs{J}_g$ holds on $M$.

An initial data set $(M,g,k)$ is \emph{asymptotically flat} if there is a compact subset $K\subset M$ such that $M\bsls K$ consists of finite number of connected components $M_1, \ldots, M_p$ each of which is diffeomorphic to $\R^3\bsls \bar{B}$ for some closed ball $\bar{B}$ in $\R^3$. In this paper, we will also require several drop-off rates on such coordinate charts at each infinity end: 
\begin{align}\label{condition: asymp flat}
    g_{ij}-\de_{ij} = \bigO^2(\abs{x}^{-1}), \quad R_g = \bigO^1(\abs{x}^{-4}), \quad k_{ij}\in \bigO^2(\abs{x}^{-2}), \quad \sum_{i = 1}^3 k_{ii} = \bigO(\abs{x}^{-3}).
\end{align}
Here, a function $f = \bigO^k(|x|^{-p})$ means that $\sup_{M\backslash K} \sum_{i = 0}^k |x|^{p + i} |\pa^i f| < \infty$.

Suppose $(M,g,k)$ is an initial data set in spacetime $(\fM, \fg)$, then there is a future pointing normal $\eta$ of $M$ in $\fM$ such that $\fg(\eta, \eta) = -1$, and at any $p\in M$, $k(X,Y) = \fg(\fD_X \eta, Y)$ for all $X,Y\in T_pM$. Here, $\fD$ is the Levi-Civita connection of the spacetime $(\fM, \fg)$. Let $\Om\subseteq M$ be an open region and $\Si = \pa\Om$ be a smooth embedded two-sided surface and let $\nu$ be the unit normal vector field on $\Si$ pointing out of $\Om$ in $M$. Let $h$ denote the second fundamental form of $\Si$ in $M$ with respect to $\nu$ so that $h_p(X,Y): = g(\D_X \nu, Y)$ for all $X,Y\in T_p\Si$ for all $p\in\Si$ and let $\rH$ denote the mean curvature with respect to $\nu$. Now we think of $\Si$ as a spacelike 2-surface embedded in $\fM$. There are two independent canonical null normal vector fields $l^+:= \eta+\nu$ and $l^-:= \eta-\nu$ on $\Si$ in $\fM$. Then we can define the null second fundamental form $\mathfrak{h}^\pm$ of $\Si$ in $\fM$ along $l^{\pm}$ by $\mathfrak{h}^{\pm}_p(X,Y):=\fg(\fD_X l^{\pm}, Y) = (\pm h+k)(X,Y)$ for all $X,Y\in T_p\Si$ for all $p\in\Si$. We call respectively $\th^\pm := \tr_\Si \mathfrak{h}^{\pm} = \pm \mathrm{H}_\Si+\tr_\Si k$ the outer (+) or inner (-) \emph{null expansion} of $\Si$. If $\Si$ is a 2-surface with $\th^+_\Si = 0$, then $\Si$ is called a \emph{marginally outer trapped surface} (MOTS). If $\th^-_\Si=0$, then $\Si$ is called a \emph{marginally inner trapped surface} (MITS). We call $\Si$ an \emph{apparent horizon} if it is either a MOTS or MITS. Throughout this paper, we will always take a unconventional definition of expansion $\th_\Si := \rH_\Si - \rK_\Si$ and specify the choice of unit normal vector for the purpose of simplicity.

\subsection{Jang equation and regularized equations}
In 1981, R. Schoen and S.T. Yau \cite{SY2} proved general positive energy theorem by reducing the problem to the time-symmetric case which they had proved in 1979 \cite{SY1}. They noticed that the positive energy theorem in the time-symmetric case characterizes Euclidean space $\mathbb{R}^n$ as the space of zero energy. On the other hand the spacetime positive energy theorem would characterizes an arbitrary spacelike hypersurface in Minkowski space. Therefore they considered the equation introduced by P.S. Jang \cite{J} which consists of such characterization.

Given a Riemannian manifold $(M^3, g)$, it is a spacelike hypersurface in Minkowski space $\mathbb{R}^{1,3}$ if and only if it is the graph of a function $f(x)$ defined on $M$ with metric
\begin{align*}
    g_{ij} = \delta_{ij} - \pa_i f \pa_j f
\end{align*}
with $|Df|^2 < 1$. The equation is equivalent to 
\begin{align*}
    \delta_{ij} = g_{ij} + \D_i f \D_j f
\end{align*}
where $\D$ is the connection on $M$. It it convenient to think of $\R^3$ as a graph of $f$ in Riemannian manifold $M\times \R$ with product metric $g+ dt^2 $. Jang also observed that the second fundamental form $k$ of $M$ in Minkowski space agrees with the one of the graph of $f$ in $M\times \R$
\begin{align*}
    k_{ij} = \frac{\D_i \D_j f}{\sqrt{1+|\D f|^2}}
\end{align*}
with respect to the downward unit normal vector. Consequently, the triple $(M,g,k)$ is embeddable in Minkowski space as a spacelike hypersurface with metric $g$ and second fundamental form $k$ if and only if there is a function $f$ defined on $M$ whose graph has flat metric and second fundamental form $k$. Nevertheless, the system of equations is overdetermined and is usually unsovable. The Jang equation is a trace equation of the system:
\begin{align*}
    \sum_{i,j} \big(g^{ij} - \frac{f^i f^j}{\sqrt{1+|\D f|^2}}\big)\big(\frac{\D_i \D_j f}{\sqrt{1+|\D f|^2}}-k_{ij}\big) = 0
\end{align*}
where $f^i= g^{ij} f_j $ and the first factor is the inverse of induced metric on the graph of $f$ in $(M\times \R, dt^2+g)$. In the equation, the first term
\begin{align*}
    \sum_{i,j} \big(g^{ij} - \frac{f^i f^j}{\sqrt{1+|\D f|^2}}\big)\frac{\D_i \D_j f}{\sqrt{1+\abs{\D f}^2}} = \Div_M (\frac{\D f}{\sqrt{1+|\D f|^2}}) =: \rH(f)
\end{align*}
is the mean curvature of graph($f$) with respect to downward unit normal and the second term 
\begin{align*}
    \sum_{i,j} \big(g^{ij} - \frac{f^if^j}{\sqrt{1+|\D f|^2}}\big)k_{ij} = \tr_{\gr(f)} k =: \rK(f).
\end{align*}
is the trace on the tangent space of graph($f$) of the tensor $k$ extended trivially in the vertical direction to $M\times \R$, i.e. $k(\pa_t, \cdot) = 0$. Then Jang equation is a marginally trapped surface equation
\begin{align*}
    \mathrm{H}(f) - \mathrm{K}(f) = 0
\end{align*}
in the new initial data set $(M\times \R, dt^2+g, k)$.

To study the existence and regularity properties of Jang equation, Schoen-Yau in section 4 of \cite{SY2} (also cf. \cite{E2}) introduced a family of regularized equations and they showed (Lemma 3 in \cite{SY2}) that for every $s>0$ there exists a unique smooth solution $f_s$ of regularized equation 
\begin{align}\label{eq: regularized eq}
    (g^{ij}- \frac{f_s^i f_s^j}{1+|\D f_s|^2})\Big(\frac{\D_i\D_j f_s}{\sqrt{1+|\D f_s|^2}} - k_{ij}\Big) = sf_s.
\end{align}
satisfying $\lim_{x\to \infty} f_s(x) = 0$ at each infinity end. By the uniform a priori estimate based on stability argument in \cite{SY2}, a sequence of regularized solutions converges smoothly to a solution to Jang equation, while the regularized solutions could blow up to $\pm\infty$ in some bounded region enclosed by apparent horizons. Schoen-Yau analyzed the blowup behavior of the solution to Jang equation on approach to apparent horizons from outside, but what happens inside the apparent horizon remains unknown. 

In this present paper, we will take two natural geometric treatments to blowup sequences: \emph{dilation} and \emph{translation}. The key observation leading to the blowup analyses in this paper is that the capillary terms in the regularized equations can be regarded as properly rescaled solutions. In particular, a subsequence of capillary rescaled solutions converges to a Lipschitz function, called a \emph{capillary blowdown limit}. We can think of the capillary blowdown limit as a compressed profile of the limit of a blowup sequence. It turns out that capillary blowdown limit play an important role in the analysis of the limit of translated regularized solutions. Roughly speaking, the capillary blowdown limit portraits a crude but global picture about blowup behavior; the limit of translated solutions elaborates on rather accurate but local information.

In Section 2, we start by defining several frequently used notations in this paper. Next, we recall some important properties of regularized solutions in \cite{SY2}, including a priori estimates and Harnack-type inequality for a more general equation. Next, we introduce Fermi coordinates, which is technically useful, and the linearized operator of expansion. Then we extend the definition of stability of marginally trapped surfaces in \cite{AMS} to constant expansion surface.

In Section 3, we first show the existence of capillary blowdown limit for any blowup sequence and prove a basic lemma, which states that any limit of translated regularized solutions must stay in the cylinder over a level-set of capillary blowdown limit. Next, we prove several existence results of limits of translated regularized solutions. In each corresponding scenario, we prove the characterization theorem, which states that any limit of translated regularized solutions is a either graphical or cylindrical \emph{constant expansion surface} in $M\times \R$, on which the constant expansion is determined by the capillary blowdown limit. Lastly, we show that all these constant expansion surfaces arise as the limits of translated regularized solutions are stable.

In Section 4, we find that capillary blowdown limits are weak viscosity solutions to a singular elliptic geometric equation, which is related to foliations of constant expansion surfaces. Next, we prove the a priori estimates for foliations of stable constant expansion surfaces. We then construct a smooth (local) solution to the aforementioned geometric equation around any stable constant expansion surface. A comparison theorem between capillary blowdown limit and the constructed smooth solution is given at the end of Section 4.

In Section 5, the main theorem is the structure theorem of blowup regions of regularized solutions. To get started, we show two properties of thin maximal domains: The first property states that any thin maximal domain is an annular region; The second property states that if the maximal domain is thin enough, then its closure contains part of a closed smooth marginally stable constant expansion surface. Next, we prove the structure theorem. Briefly speaking, the idea is to apply the existence and characterization theorems in Section 3 at points in a fixed countable dense subset of blowup regions. Using diagonal argument gives a subsequence of regularized solutions such that its properly translated sequences around every points in the blowup regions converge to a collection of disjoint graphical or cylindrical constant expansion surfaces in $M\times \R$. This collection of disjoint constant expansion surfaces leads to a partition of the blowup regions as a disjoint union of maximal domains (of graphical solutions to constant expansion equations) and foliations of constant expansion surfaces. Furthermore, these maximal domains and sheets of foliations of constant expansion surfaces make up the level-sets of capillary blowdown limit. Therefore, the structure of capillary blowdown limit is also understood. At the end of the section, we show a volume estimate and a boundary area estimate for blowup regions as immediate applications of the structure theorem.

In Section 6, we elaborate on properties of trivial capillary blowdown limit which is identically zero everywhere. This is a special case of slow blowup behavior. We first show that trivial capillary blowdown limit is rigid in the sense that if one sequence of blowup sequence of regularized solutions has trivial capillary blowdown limit, then all other sequences do. Next, we prove that trivial capillary blowdown limit gives a topological restriction to blowup regions assuming dominant energy condition. In the model case where the blowup region is one maximal domain of a solution to Jang equation, we use the stability inequality and a gluing lemma to construct a closed orientable smooth 3-manifold of positive Yamabe type. It is known that a closed orientable smooth 3-manifold of positive Yamabe type is a connected sum of finite numbers of space forms and $S^2 \times S^1$'s. We conclude the same result for general case by applying the structure theorem of blowup region and property of thin maximal domains in Section 5.

\subsection*{Acknowledgments}
This paper forms part of my thesis. I am extremely grateful to my advisor Rick Schoen for introducing this problem to me, and for his constant support and encouragement, especially during the pandemic. I would like to sincerely thank Michael Eichmair for clarifying the proof of stability of MOTS and for many helpful suggestions. Thanks very much to Long-Sin Li, Chao-Ming Lin, Hongyi Sheng, and Tin-Yau Tsang for great discussions. 

\section{Preliminaries}
\subsection{Notations}
Let $u$ be a function defined on $M$ and let $C\in \R$ be a number. Let
\begin{equation*}
	E_C^+(u) := \{x\in M: u(x) > C\}
\end{equation*} 
denote the super-level set of $u$, and let
\begin{equation*}
	E_C^-(u) := \{x\in M: u(x) < C\}
\end{equation*}
denote the sub-level set of $u$.

\subsection{Properties of regularized solutions}
\begin{prop}[\cite{SY2}, Proposition 1 and 2] \label{Prop: LocalEst}
Let $F\in C^1(M)$ and $\mu_1, \mu_2$ be constants so that 
\begin{align}
    \sup_M |F| \leq \mu_1,\quad \sup_M |\D F|\leq \mu_2.
\end{align}
Suppose $f$ is a $C^2$ solution to 
\begin{align}
    \mathrm{H}(f) - \mathrm{K}(f) = F(x).
\end{align}
Then
\begin{enumerate}
    \item[(1)] There exists $c_1 = c_1(M,g,k,\mu_1,\mu_2)$ such that the norm of the second fundamental form 
\begin{equation}
	|h|^2 \leq c_1.
\end{equation} 
    \item[(2)] There is $\rho = \rho(M,g,k,\mu_1,\mu_2)>0$ such that for every $X_0\in \gr(f)$ and $(y^1,y^2,y^3, y^4)$ normal coordinates in $M\times \R$ on which $T_{X_0} \gr(f)$ is the $y^1y^2y^3$-space, the local defining function $w(y)$ for $\gr(f)$ is defined on $\{y=(y^1,y^2,y^3): |y|\leq \rho\}$ with 
    \begin{align*}
        \gr(f)\cap B^4 (X_0;\frac{\rho}{2}) \subseteq \{(y, w(y)): |y|\le \rho\}
    \end{align*} and satisfies for any $\al\in (0,1)$ there is a constant $c_2 = c_2(M,g,k,\mu_1,\mu_2,\al)>0$ such that
    \begin{align*}
        \norm{w}_{3,\al;\{y:|y|\leq \rho\}} \leq c_2.
    \end{align*}
    Here, $B^4(X_0, r)$ denotes the geodesic ball in $(M\times \R, g + dt^2)$ and $\|w\|_{3,\al; \{y:|y|\leq \rho\}}$ denotes the $C^{3,\al}$-Holder norm on the Euclidean ball $\{y:|y|\leq \rho\}$ in the tangent space.
    \item[(3)] There are constants $c_3,c_4$ depending on $M,g,k,\mu_1,\mu_2$ such that the following Harnack-type inequalities hold
    \begin{align*}
        &\sup_{\gr(f)\cap B^4 (x_0;\frac{\rho}{2})} \lan \nu, -\pa_t \ran \leq c_3  \inf_{\gr(f)\cap B^4(X_0;\frac{\rho}{2})} \lan \nu, -\pa_t \ran\\ 
        &\sup_{\gr(f)\cap B^4 (x_0;\frac{\rho}{2})} |\bar{\D} \log \lan \nu, -\pa_t \ran| \leq c_4.
    \end{align*}
    Here, $\nu$ is the downward pointing normal of $\gr(f)$ in $M\times \R$ and $\bar{\D}$ denotes the Levi-Civita connection on $\gr(f)$.
\end{enumerate}
\end{prop}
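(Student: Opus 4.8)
I would establish the three statements as follows. Write $N := M\times\R$ with product metric $\bar g = g+dt^2$, set $\Si := \gr(f)$ with downward unit normal $\nu$, and observe that $\Si$ has mean curvature $H_\Si = \rH(f) = \rK(f) + F = \tr_\Si k + F$ in $N$, so $\sup_\Si|H_\Si|\le\mu_1 + 3\sup_M|k|$. Two structural facts organize everything. \emph{First}: since $\pa_t$ is parallel on $N$ and $k,F$ depend only on $x\in M$, every vertical translate $\gr(f+c)$ solves the same equation, so $N$ is foliated by solutions, with lapse $u := \lan\nu,-\pa_t\ran = (1+|\D f|^2)^{-1/2}>0$. \emph{Second}: the Jacobi-type identity for the parallel field $\pa_t$, combined with $\bar\D^N_{\pa_t}k=0$, $\bar\D^N_{\pa_t}F=0$, and the identity $\bar\D u = h\big((-\pa_t)^\top,\cdot\big)$ — which converts the one term of $\lan(-\pa_t)^\top,\bar\D H_\Si\ran$ not already carrying a factor $u$, namely $2k$ contracted with the shape operator applied to $(-\pa_t)^\top$, into a contraction of $k$ with $\bar\D u$ — shows that $u>0$ satisfies the \emph{homogeneous} linear elliptic equation
\begin{equation*}
  \bar\De u + \lan b,\bar\D u\ran + \big(|h|^2 + \overline{\Ric}_N(\nu,\nu) + c\big)u = 0,\qquad |b|\le C\big(\textstyle\sup_M|k|\big),\ \ |c|\le C\big(\textstyle\sup_M(|k|+|\D k|),\,\mu_2\big),
\end{equation*}
where $\bar\De,\bar\D$ denote the induced Laplacian and connection on $\Si$.

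The heart is (1). The plan is to show that $\Si$ is an \emph{almost-minimizer of perimeter} in $N$: a $(\Lambda,r_0)$-minimizer with $\Lambda := \mu_1 + 3\sup_M|k|$ and $r_0$ a fixed geometric scale (bounded below in terms of $\dist(\,\cdot\,,\pa)$, the injectivity radius of $N$, and bounds on $g$). The translate-foliation supplies a unit field $\hat\nu$ on a neighborhood — the downward normal of the leaf through each point — with $\hat\nu=\nu$ on $\Si$ and $\Div_N\hat\nu = H_{\mathrm{leaf}}$, so $|\Div_N\hat\nu|\le\Lambda$; comparing $\Si$ with an arbitrary competitor via the divergence theorem gives, for $E := \{t<f(x)\}$,
\begin{equation*}
  P\big(E;B^4(X_0,r)\big) \le P\big(F;B^4(X_0,r)\big) + \Lambda\,\abs{E\triangle F}\quad\text{whenever}\ E\triangle F\Subset B^4(X_0,r),\ r<r_0,
\end{equation*}
and, capping $\Si$ off through $\pa B^4(X_0,r)$, also the Euclidean area bound $\area(\Si\cap B^4(X_0,r))\le Cr^3$. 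Because area-minimizing boundaries in $\R^{n+1}$ are smooth for $n\le 6$ and the hypersurface dimension here is $3$, the interior curvature estimate for $(\Lambda,r_0)$-minimizers applies and yields $|h|^2\le c_1(M,g,k,\mu_1,\mu_2)$; concretely, if this failed one would rescale about a point of maximal curvature — using the area bound and the fact that $\Lambda$ scales to $0$ — to produce a complete area-minimizing hypersurface in $\R^4$ with $|h|(0)=1$, which is forced to be a hyperplane, a contradiction.

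Given (1), part (2) is local elliptic regularity: where $|h|^2\le c_1$ there is a uniform $\rho>0$ so that, in $\bar g$-normal coordinates $(y^1,\dots,y^4)$ at $X_0$ with $T_{X_0}\Si=\{y^4=0\}$, the component $\Si\cap B^4(X_0,\rho/2)$ is a single graph $y^4=w(y)$ over $\{|y|\le\rho\}$ with $\|w\|_{C^{1,1}}$ bounded; $w$ solves the quasilinear prescribed-mean-curvature equation with right-hand side assembled from $g,k,F$, uniformly elliptic at this scale, so De Giorgi–Nash gives $w\in C^{1,\al}$ and Schauder bootstrapping (using $g,k\in C^2$ and the bound on $F$) gives $\|w\|_{3,\al}\le c_2$. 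For (3), by (1)–(2) the coefficients of the displayed equation for $u$ are bounded on $\Si\cap B^4(X_0,\rho/2)$, whose intrinsic geometry is controlled by (2), and $u>0$; the Moser–Harnack inequality then gives $\sup u\le c_3\inf u$, and inserting the resulting oscillation bound into the equation $\bar\De\log u + \lan b,\bar\D\log u\ran + |\bar\D\log u|^2 + |h|^2 + \overline{\Ric}_N(\nu,\nu) + c = 0$ for the bounded function $\log u$, a standard interior gradient estimate gives $|\bar\D\log u|\le c_4$.

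I expect (1) to be the main obstacle. The real difficulty is that $\gr(f)$ carries no a priori area bound in $N$ — a vertical graph can have arbitrarily large area and can limit onto a vertical cylinder — which rules out a direct maximum-principle argument with cutoffs on $\Si$; the translate-foliation is precisely what repairs this, simultaneously furnishing the Euclidean area bound and the almost-minimizing property, after which the low dimensionality of $N$ (absence of singular area-minimizing hypercones) closes the curvature estimate. Parts (2) and (3) are then routine.
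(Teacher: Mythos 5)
Your proposal correctly isolates the two structural observations that also drive the Schoen--Yau argument: the translate foliation of $M\times\R$ by the graphs $\gr(f+c)$ furnishes (a) a unit vector field $\hat\nu$ extending $\nu$ with $\Div_N\hat\nu$ bounded by $\mu_1+C\sup_M|k|$, hence a calibration giving a local area bound, and (b) a \emph{positive} solution $u=\lan\nu,-\pa_t\ran$ of the linearized operator. In particular, your remark that the only potentially dangerous inhomogeneous term $\lan\D^\Si F,(-\pa_t)^\top\ran$ is actually $u\lan\D F,\nu\ran$, because $\D F$ is horizontal and $-\pa_t$ is vertical, is precisely why the Moser--Harnack machinery applies to $u$ as a solution of a \emph{homogeneous} equation; this observation is implicit in \cite{SY2} and you state it cleanly. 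Parts (2) and (3) then match the original in substance, though in your framework graphicality at a uniform scale is established before the pointwise curvature bound rather than deduced from it.

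Where you take a genuinely different route is (1). Schoen--Yau (and this paper, in the analogous estimate for foliations of stable CES in Section~4.2) do not pass through the regularity theory of $(\Lambda,r_0)$-almost-minimizers. Instead they run the Schoen--Simon--Yau machine: the existence of the positive Jacobi-type solution $u$ gives a stability inequality with arbitrary test functions $\varphi\in C^1_c(\Si)$; this is combined with Simons' inequality for $|h|$, with the Michael--Simon Sobolev inequality (whose local validity is exactly what the calibration area bound secures), and with De Giorgi--Nash--Moser iteration, to produce a pointwise $|h|^2$ bound directly, with an explicit dependence on $(M,g,k,\mu_1,\mu_2)$. Your alternative --- use the calibration to show $\{t<f(x)\}$ is a $(\Lambda,r_0)$-minimizer of perimeter, then argue by compactness and blow-up against a point of almost-maximal curvature, using that $\Lambda$ scales to zero, the area bound scales correctly, and the only complete minimizing boundary in $\R^4$ with Euclidean area growth is a hyperplane --- is sound and by now standard, but it imports the $\epsilon$-regularity and compactness theory for almost-minimizing boundaries and yields a constant $c_1$ only by contradiction. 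The SSY route buys self-containment at the level of PDE estimates and explicit constants (and is the one the rest of this paper reuses); the GMT route buys conceptual brevity and generality, at the cost of invoking a heavier regularity package. Both hinge on $n=3\le 6$.

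Two small imprecisions worth fixing: the zeroth-order coefficient $c$ in your linearized equation also depends on $\mu_1$, through the $\Th(2\tr_M k+\Th)$ term with $\Th=F$; and ``rescale about a point of maximal curvature'' should be replaced by a point-picking argument (maximize $|h|\cdot\dist(\cdot,\pa)$, say), since a maximum of $|h|$ itself need not exist. Neither affects the outcome.
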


\begin{rmk}
In \cite{SY2}, they also assumed the bound for $|\D^2F|$, but this condition was not used in their proof.
\end{rmk}

To apply the local estimates in Proposition \ref{Prop: LocalEst} to the regularized equations (\ref{eq: regularized eq}), Schoen and Yau proved by maximum principle argument that there are constants $\mu_1 = \max_M |\tr_g k|$ and $\mu_2 = \mu_2(|\mathrm{Ric}|_{C^0(M)}, |k|_{C^1(M)})$ such that 
\begin{align}
	|sf_s|\leq \mu_1 \quad \mbox{and} \quad   |s \D f_s|\leq \mu_2 \quad \mbox{in $M$}.
\end{align}
Using the a priori estimates Proposition \ref{Prop: LocalEst}, they showed the existence of convergent sequence $f_{s_j}$ and the regularity of the limit function $f_0$ in the following proposition. Moreover, since the capillary terms in (\ref{eq: regularized eq} converge to zero in the region where $f_0$ is finite, $f_0$ is a solution to Jang equation.
\begin{prop}[cf. \cite{SY2}, \cite{E2}]\label{prop: SY Jang equation}
There exists a positive sequence $s_j\to 0$ and disjoint open sets $\Omega_+, \Omega_-, \Omega_0$ with the following properties:
\begin{enumerate}
    \item[(1)] $f_{s_j}$ diverges to $\pm \infty$ on $\Omega_\pm$ respectively and $f_{s_j}$ converges to a smooth function $f_0$ on $\Omega_0$ which satisfies Jang equation $\mathrm{H}(f_0) - \mathrm{K}(f_0) = 0$ and drops off at the rate $f_0\in \bigO^{3}(\abs{x}^{-1/2})$ at each infinity of $M$.
    \item[(2)] The sets $\Omega_+$ and $\Omega_-$ have compact closures and $M = \bar{\Omega}_+\cup \bar{\Omega}_- \cup \bar{\Omega}_0$. Each connected component $\Sigma_\pm$ of $\pa\Omega_\pm$ is a closed properly embedded smooth apparent horizon in $M$ satisfying $\mathrm{H}_{\Sigma_\pm} \pm \tr_{\Sigma_\pm} k = 0$ where $\mathrm{H}_{\Sigma_\pm}$ is computed with respect to the unit normal on $\pa\Omega_\pm$ pointing out of $\Om_\pm$. No two connected components of $\Om_+$ (respectively $\Om_-$) can share a common boundary. Moreover, as $a\to \pm \infty$ the hypersurfaces $\gr(f_0 - a, \Omega_0)$ converge to the cylinder $(\pa \Omega_\pm \cap \pa \Omega_0)\times \R$ uniformly in the sense of $C^{2,\al}_{loc}$.
    \item[(3)] $\gr(f_{s_j})$ converges smoothly to a hypersurface $S$ in $M\times \R$. Each component of $S$ is either a component of $\gr(f_0,\Omega_0)$ or a cylinder $\Si\times\R$ over a component $\Sigma$ of $\pa\Omega_+\cap \pa \Om_-$. Any two components of $S$ are separated by a positive distance.
\end{enumerate}
\end{prop}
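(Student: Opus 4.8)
The plan is to follow the construction of Schoen--Yau in \cite{SY2} (compare also \cite{E2}), organizing it around the uniform estimates of Proposition \ref{Prop: LocalEst} applied to the regularized equations \eqref{eq: regularized eq}. First I would recall that the capillary bounds $|s f_s| \le \mu_1$ and $|s\D f_s| \le \mu_2$ obtained by the maximum principle allow us to view each $f_s$ as a $C^2$ solution of $\rH(f_s) - \rK(f_s) = F_s$ with $F_s := s f_s$ satisfying $\|F_s\|_{C^0} \le \mu_1$ and $\|\D F_s\|_{C^0} \le \mu_2$, uniformly in $s$. Hence part (1) of Proposition \ref{Prop: LocalEst} gives a uniform bound $|h_{s}|^2 \le c_1$ on the second fundamental form of $\gr(f_s)$ in $M\times\R$, and part (2) gives uniform $C^{3,\al}$ control of the local graphing functions over balls of a fixed radius $\rho$ in the tangent space. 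Since the hypersurfaces $\gr(f_s)$ have uniformly bounded geometry in this sense, a compactness argument (Arzelà--Ascoli on the local graphing functions, with a diagonal argument over a countable exhaustion) produces a sequence $s_j \to 0$ along which $\gr(f_{s_j})$ converges in $C^{3,\al}_{loc}$ to a smooth hypersurface $S$ in $M\times\R$, proving the convergence assertion in part (3).

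Next I would set $\Omega_0 := \{x\in M : \limsup_j |f_{s_j}(x)| < \infty\}$ and, on $\Omega_0$, pass to a further subsequence so that $f_{s_j}$ converges to a function $f_0$; the $C^{3,\al}_{loc}$ convergence of the graphs upgrades this to smooth convergence where the limit graph is vertical-slope-free, so $f_0 \in C^\infty(\Omega_0)$. On $\Omega_0$ the capillary term $s_j f_{s_j} \to 0$ (it is bounded by $s_j \cdot \sup_{K}|f_{s_j}|$ on compacts of $\Omega_0$ where $f_{s_j}$ stays bounded), so $f_0$ solves the Jang equation $\rH(f_0) - \rK(f_0) = 0$. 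The asymptotic decay $f_0 \in \bigO^3(|x|^{-1/2})$ at each end follows from the drop-off conditions \eqref{condition: asymp flat} together with a barrier/maximum-principle argument at infinity applied uniformly to the $f_{s_j}$ (this is exactly the asymptotic analysis in \cite{SY2}); set $\Omega_\pm := \{x : f_{s_j}(x) \to \pm\infty\}$, which are open since blowup at one point forces blowup on a neighborhood by the Harnack inequality of Proposition \ref{Prop: LocalEst}(3) (the tilt factor $\langle \nu, -\pa_t\rangle$ is comparable on balls of size $\rho/2$). This also gives $M = \bar\Omega_+ \cup \bar\Omega_- \cup \bar\Omega_0$, and the decay forces $\Omega_\pm$ to have compact closure.

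For part (2), the boundary structure comes from analyzing $S$ near $\pa\Omega_\pm$: the Harnack inequality controls the tilt, so along a sequence of points approaching $\pa\Omega_+$ the graphs become more and more vertical, and the uniform second-fundamental-form bound forces a component of $S$ over a neighborhood of such a boundary point to be a vertical cylinder $\Sigma \times \R$; taking $a \to +\infty$ in $\gr(f_0 - a, \Omega_0)$ realizes this cylinder as a $C^{2,\al}_{loc}$ limit. Writing out the mean curvature of $\Sigma\times\R$ in $M\times\R$ and the trace of $k$ along it, the Jang equation degenerates in the limit to the apparent horizon equation $\rH_{\Sigma_\pm} \pm \tr_{\Sigma_\pm} k = 0$ with the stated orientation of the normal (the sign being dictated by whether we are on the $+\infty$ or $-\infty$ side). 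Smoothness and proper embeddedness of $\Sigma_\pm$ follow from the $C^{3,\al}$ estimates, and the "no two components share a boundary" and "separation by positive distance" statements follow because two sheets of $S$ meeting or touching would violate either the strong maximum principle for the Jang operator (in $\Omega_0$) or the uniform regularity of the limit (the graphs would have to have unbounded second fundamental form to reconnect).

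The main obstacle I expect is the boundary analysis in part (2): extracting from the smooth limit $S$ the precise statement that each component of $\pa\Omega_\pm$ is a \emph{closed, smooth, properly embedded} apparent horizon, and that the convergence $\gr(f_0 - a) \to \Sigma\times\R$ holds in $C^{2,\al}_{loc}$ rather than merely in a weak/Hausdorff sense. This requires carefully tracking the tilt factor $\langle \nu, -\pa_t\rangle$ via the Harnack inequality — it must go to $0$ uniformly on approach to $\pa\Omega_\pm$, so that the rescaled (translated) graphs $\gr(f_0 - a)$ have bounded geometry and subconverge, and the limit is genuinely a cylinder over a single smooth surface with no pinching or multiplicity. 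Everything else (the compactness, the passage to the Jang equation on $\Omega_0$, the asymptotics) is a fairly direct consequence of Proposition \ref{Prop: LocalEst} and the decay hypotheses \eqref{condition: asymp flat}.
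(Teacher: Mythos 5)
The paper does not prove this proposition; it cites it to \cite{SY2} and \cite{E2} as known background. Your reconstruction follows the same Schoen--Yau scheme those references use, and the overall architecture (uniform capillary bounds $\Rightarrow$ a priori estimates $\Rightarrow$ subsequential graph convergence $\Rightarrow$ splitting into graphical and cylindrical components $\Rightarrow$ cylindrical components are apparent horizons) is correct.

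Two places where your sketch is thinner than what is actually required. First, defining $\Omega_0 := \{x : \limsup_j |f_{s_j}(x)| < \infty\}$ does not obviously produce an open set: a point $x$ on the base $\Si$ of a cylindrical component of $S$ can have $f_{s_j}(x)$ bounded along a subsequence (the cylinder passes through all heights) even though $|\D f_{s_j}(x)| \to \infty$ and $f_{s_j}$ diverges on both sides of $\Si$. The clean definition, used in \cite{SY2} and \cite{E2}, is to declare $x\in\Omega_0$ when $\gr(f_{s_j})$ converges near $(x, f_{s_j}(x))$ to a graphical component; openness and smoothness of $f_0$ then come from the local $C^{3,\al}$ estimate, and $\Omega_\pm$ are defined by divergence of $f_{s_j}$, with the density of $\Omega_0\cup\Omega_+\cup\Omega_-$ in $M$ coming from the fact that the exceptional set is contained in the union of finitely many embedded surfaces. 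Second, "a barrier/maximum-principle argument at infinity" glosses over the most technical part of part (1): the decay $f_0\in \bigO^3(|x|^{-1/2})$ requires constructing explicit super/sub-solutions adapted to the drop-off rates \eqref{condition: asymp flat}, establishing the uniform decay for the $f_{s_j}$ (not just $f_0$), and then upgrading to derivative decay via interior Schauder estimates; this is Proposition 3 of \cite{SY2} and is not a one-line consequence. Neither issue is a wrong turn, but both would need to be filled in for the argument to close.
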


\subsection{Fermi coordinates}
Let $(M^{n+1},g)$ be a $(n+1)$-dimensional Riemannian manifold. Let $\Si^n\subset M$ be a smooth embedded two-sided $n$-dimensional surface assigned with unit normal vector field $\nu$.  Let $U', U$ be open subset of $M$, let $y^1, \ldots, y^n$ be a coordinate system on $\Si \cap U'$ and let $\de>0$ small such that the map $\Up: \Si\cap U'\times (-\de, \de) \rightarrow U$ given by 
\begin{align*}
    \Up(y, \si) = \exp_y(\si \nu (y))
\end{align*} is bijective. This means that for any point $p\in U$, there exists a unique $y\in \Si$ and $\si\in (-\de, \de)$ such that $p = \Up(y, \si)$. We then introduce the Fermi coordinates $y^1, \ldots, y^n, \si$ on $U$ through the map $\Up$. We denote basis vectors by $\pa_i = \frac{\pa}{\pa y^i}$ for all $i$ and $\pa_\si = \frac{\pa}{\pa \si}$. By properties of exponential map, we have $\lan \pa_i, \pa_\si\ran(p) = 0$ for $1\leq i \leq n$ and $\D_{\pa_\si} \pa_\si(p) = 0$ for all $p\in U$. In Fermi coordinates the metric $g$ in $U$ can be written as
\begin{align*}
    \sum_{i,j = 1}^n g_{ij}(y, \si) dy^i dy^j + d\si^2 = g(y, \si).
\end{align*}
where $g_{ij} = \lan \pa_i, \pa _j\ran$. If we define the surface $\Si_\si = \{\si \equiv \mathrm{const}\}$ in Fermi coordinates then
\begin{align*}
	&g\big|_{\Si_\si} = \sum_{i,j = 1}^n g_{ij}(y, \si) dy^i dy^j,\\
	&\pa_\si g_{ij}(y, \si) = 2 h_{ij}(y, \si).
\end{align*}
where $h_{ij}(y, \si) := \lan \D_{\pa_i} \pa_\si, \pa_j \ran(y, \si)$ is the second fundamental form of $\Si_\si$ with respect to $\pa_\si$. When $\si\equiv 0$, the Fermi coordinate system nicely captures the geometry of $\Si$ in $M$. We denote half geodesic tubular neighborhood around $\Si$ on the $\pm\nu$-side with thickness $\de$ respectively by
\begin{align*}
    \cN_{\pm,\de}(\Si, \nu) := \{\Up(y, \pm\si): x\in \Si, 0 \leq \si < \de)\},
\end{align*}
and the full tubular neighborhood around $\Si$ with thickness $2\de$ by
\begin{align*}
    \cN_\de(\Si) := \{y\in M: \dist(x, \Si)< \de\}.
\end{align*}

Sometimes we will analyze the properties of constant expansion surfaces near another. It would be useful to consider graphs in Fermi coordinates. Let $w\in C^\infty(\Si)$ with $|w|< \de$, let $\fgr(w) = \{\Up(y, w(y)): y\in \Si\}$ denote the graph of $w$ in Fermi coordinates.

\subsection{Linearization of the expansion}
Let $\Si^n\subset M^{n+1}$ be a smooth embedded two-sided hypersurface in an initial data set $(M^n, g, k)$ and let $\nu$ be the normal vector field assigned to $\Si$. Let $\Phi_\tau$ be a smooth one-parameter family of differeomorphisms of $M$ for $\tau\in (-\vare, \vare)$ so that $\Phi_0$ is the identity map. Suppose that $\frac{d}{d\tau}|_{\tau=0} \Phi_\tau = X + \varphi \nu$ where $X$ is a tangential vector field, $\varphi$ is a smooth function on $\Si$. Denote $\Si_\tau : = \Phi_\tau(\Si)$. We have the following variation formulas (c.f. \cite{M1} Lemma 5.1 and \cite{AEM} section 2.2)
\begin{align}
    \frac{d}{d\tau}\Big|_{\tau=0} \,\Phi^*_\tau \mathrm{H}_{\Si_\tau} &= \lan\D^\Si \,\mathrm{H}_\Si, X\ran - \De^\Si \varphi - \big(|h|^2_\Si + \Ric(\nu,\nu)\big)\varphi\\
    \frac{d}{d\tau}\Big|_{\tau=0} \,\Phi^*_\tau \tr_{\Si_\tau}(k) &= \lan \D^\Si \,\tr_\Si(k), X\ran + 2k(\nu, \D^\Si \varphi) + \D_{\nu} \big(\tr_{M}(k)\big)\varphi - (\D_{\nu} k)(\nu,\nu)\varphi
\end{align}
where $\D^\Si$ and  $\De^\Si$ denote respectively the gradient operator and non-positive Laplacian operator on $\Si$ equipped with induced metric, $|h|_\Si^2$ denotes the square norm of the second fundamental form of $\Si$ in $M$ with respect to $\nu$, and $\Ric$ and $D$ denote ambient Ricci curvature and  Levi-Civita connection in $M$. Now let $\xi := \big(k(\nu,\cdot)^\sharp\big)^\top \in \Gamma(\mathrm{T}\Si)$, we have
\begin{align*}
    (D_{\nu}\, k)(\nu,\nu) = -\mathrm{H}_\Si\, k(\nu,\nu) + \lan h, k\ran_\Si + (\Div_{M}\, (k))(\nu)- \Div_\Si\, (\xi).
\end{align*}
Using the Gauss equation and the definition of local density mass $\mu$ of $(M, g, k)$, we can compute
\begin{align*}
    \Ric(\nu,\nu) = \mu +\frac{1}{2}\Big(-\rR_\Si + |k|_{g} - \big(\tr_{g} \, k\big)^2 - |h|^2_\Si + \mathrm{H}^2_\Si\Big)
\end{align*}
and using definition of local current density $J$ of $(M, g, k)$ we have
\begin{align*}
    \big(\Div_{M}\,(k)\big)(\nu) = J(\nu) + D_{\nu} (\tr_\Si \,k).
\end{align*}
Define $\th_{\Si_\tau}:= \rH_{\Si_\tau} - \rK_{\Si_\tau}$ to be the expansion of $\Si_\tau$. Combining all above identities, we obtain
\begin{align*}
    \frac{d}{d\tau}\Big|_{\tau=0} \,\Phi^*_\tau \,\th_{\Si_\tau} &= \lan D^\Si \,\th_\Si , X \ran - \De^\Si \varphi - 2\lan\xi, \D^\Si \varphi \ran\\
    &\quad + \bigg(\frac{1}{2}\rR_\Si - \frac{1}{2}|h - k|^2_\Si - \mu + J(\nu) - \Div_\Si \xi - |\xi|^2 - \frac{1}{2}\th_\Si\,\big( 2\tr_{M}\,(k) + \th_\Si \big)\bigg)\varphi.
\end{align*}
If $\Phi$ is a normal deformation of the constant expansion hypersurface $\Si$ which means that $X=0$ and $\th_\Si \equiv \Th$ is constant, then we can define the \emph{linearized operator of expansion} by
\begin{align}\label{op1}
    \mathcal{L}_\Si \varphi :=\frac{d}{d\tau}\Big|_{\tau=0} \,\Phi^*_\tau \,\th_{\Si_\tau} =  - \De^\Si \varphi - 2\lan \xi, \D^\Si \varphi \ran 
    + \bigg(\cP - \Div_\Si \xi - |\xi|^2 - \frac{1}{2}\Th\,\big(2\tr_{M}\,(k) + \Th\big)\bigg)\varphi
\end{align}
where $\cP = \frac{1}{2}\rR_\Si - \frac{1}{2}|h - k|^2_\Si - \mu + J(\nu)$. If $\varphi>0$, we have a simpler expression
\begin{align}\label{op2}
    \varphi^{-1}\cL_{\Si} \varphi &= -\Div_{\Si}(\xi + \D^\Si \log \varphi) - |\xi + \D^\Si \log \varphi|_\Si^2 + \cP - \frac{1}{2}\Th\,\big( 2\tr_{M}\,(k) + \Th\big).
\end{align}

The linear operator $\cL_\Si$ is not self-adjoint due to the first-order derivative contributed by $k$. Thus, apparent horizons do not arise as critical points of a standard variational problem in terms of initial data set $(M,g,k)$. When $\Si$ is closed, the Krein-Rutman theorem in general elliptic operator theory implies that the \emph{principal eigenvalue} $\la_1 = \la_1(\cL_\Si)$ is real and that there is a smooth positive eigenfunction $\beta$ defined on $\Si$ satisfying $\cL_\Si \beta = \la_1 \beta$. Recall that the principal eigenvalue is the eigenvalue of $\cL_\Si$ having the minimal real part. Moreover, $\la_1$ is \emph{simple}. Namely, the dimension of the eigenspace corresponding to $\la_1$ is one. More discussion can be found in \cite{AMS} section 4. We say that a constant expansion surface $\Si$ is \emph{stable} if the principal eigenvalue $\la_1$ of $\cL_\Si$ is nonnegative.
\begin{prop}[cf. \cite{AM}, also \cite{AEM} and Proposition \ref{prop: stability of CES} for constant expansion surfaces in the present paper for simplified proof] \label{prop: stability for MOTS}
The closed smooth apparent horizons appear as components of $\pa \Om_\pm , \pa \Om_0$ in Proposition \ref{prop: SY Jang equation} are stable. 
\end{prop}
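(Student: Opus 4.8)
The plan is to realize each such apparent horizon $\Si$ as the ``profile'' of a positive Jacobi-type field on the cylinder $\Si\times\R$, and to deduce $\la_1(\cL_\Si)\ge 0$ from a differential inequality along the $\R$-factor. By Proposition~\ref{prop: SY Jang equation}, a component $\Si$ of $\pa\Om_+$, $\pa\Om_-$, or $\pa\Om_0$ falls into one of two cases: (a) $\Si$ lies on $\pa\Om_0$, and the hypersurfaces $\gr(f_0-a,\Om_0)$ converge in $C^{2,\al}_{loc}$ to $\Si\times\R$ as $a\to\pm\infty$; or (b) $\Si$ is a component of $\pa\Om_+\cap\pa\Om_-$, and $\Si\times\R$ is a component of $S=\lim_j\gr(f_{s_j})$, so $\gr(f_{s_j})\to\Si\times\R$ in $C^{2,\al}_{loc}$. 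In either case there is a sequence $\Si_i\subset M\times\R$, with downward unit normal $\nu_i$, of hypersurfaces of vanishing (case (a)) or small (case (b)) expansion converging locally smoothly to $\Si\times\R$; write $f$ for the associated $f_0-a$ resp.\ $f_{s_j}$, and take the normal of $\Si$ to be the limit of the $\nu_i$, so that $\th_\Si=0$ and all linearized operators below are computed accordingly.

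The key observation is that the tilt $u_i:=\lan\nu_i,-\pa_t\ran=(1+\abs{\D f}^2)^{-1/2}>0$ is an (approximate) Jacobi field on $\Si_i$. Since $\pa_t$ is a Killing field of $(M\times\R,g+dt^2)$ leaving $k$ invariant, the expansion is invariant under $t$-translations; applying the variation formula for the expansion from Section~2 (from which (\ref{op1}) is obtained) to the flow of $\pa_t$ (with tangential part $X=\pa_t^\top$ and normal component $\varphi=-u_i$) then gives $L_iu_i=\lan\D^{\Si_i}\th_{\Si_i},\pa_t^\top\ran$, where $L_i$ is the linearized expansion operator of $\Si_i$. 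In case (a), $\th_{\Si_i}\equiv0$, so $L_iu_i=0$; in case (b), $\th_{\Si_i}=s_jf_{s_j}=s_j\,(t|_{\Si_i})$, so $\D^{\Si_i}\th_{\Si_i}=s_j\pa_t^\top$ and $L_iu_i=s_j\abs{\pa_t^\top}^2=s_j(1-u_i^2)\ge0$. I would then renormalize: fix $p_i\in\Si_i$ at height $t=0$ above a fixed point of $\Si$, put $c_i:=u_i(p_i)$ and $\hat u_i:=u_i/c_i$, so $\hat u_i(p_i)=1$. Each $\Si_i$ satisfies Proposition~\ref{Prop: LocalEst} with $F=0$ resp.\ $F=s_jf_{s_j}$ (hence $\abs F\le\mu_1$, $\abs{\D F}\le\mu_2$), so the Harnack inequality of Proposition~\ref{Prop: LocalEst}(3) for $\lan\nu_i,-\pa_t\ran$ holds on balls of a fixed radius; chaining finitely many over $\Si\times[-T,T]$ — the $\Si_i$ have uniformly bounded geometry near $\Si$ by Proposition~\ref{Prop: LocalEst}(2) — bounds $\hat u_i$ above and below there, uniformly in $i$. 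Together with interior Schauder estimates for $L_i\hat u_i=(s_j/c_i)(1-u_i^2)$ and a diagonal argument in $T$, a subsequence of $\hat u_i$ converges in $C^{2,\al}_{loc}$ to some $\hat u_\infty>0$ on $\Si\times\R$. Moreover $c_i\to0$ (the $\Si_i$ become vertical near $\Si$), and $\abs{s_j\D f_{s_j}}\le\mu_2$ together with $c_i=(1+\abs{\D f_{s_j}(p_i)}^2)^{-1/2}$ forces $s_j/c_i\le\mu_2/\sqrt{1-c_i^2}$, so along a further subsequence $s_j/c_i\to\ga\in[0,\mu_2]$ (with $\ga=0$ in case (a)). Since the coefficients of $L_i$ converge locally uniformly to those of the linearized expansion operator $\cL_{\Si\times\R}$ of the zero-expansion cylinder, and $1-u_i^2\to1$, the limit satisfies $\cL_{\Si\times\R}\hat u_\infty=\ga\ge0$.

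To conclude, note that since $k(\pa_t,\cdot)=0$ and the product metric is flat in the $\pa_t$-direction, a direct computation from (\ref{op1}) gives $\cL_{\Si\times\R}=-\pa_t^2+\cL_\Si$, with $\cL_\Si$ acting in the $\Si$-variables only. Let $\be^\ast>0$ be a principal eigenfunction on the closed surface $\Si$ of the formal adjoint $\cL_\Si^\ast$, whose principal eigenvalue equals $\la_1:=\la_1(\cL_\Si)$ (Krein--Rutman, applied now to $\cL_\Si^\ast$). With $\phi(t):=\int_\Si\hat u_\infty(\cdot,t)\,\be^\ast\,dA$ we have $\phi>0$, and integrating $\cL_{\Si\times\R}\hat u_\infty=\ga$ against $\be^\ast$ gives $-\phi''+\la_1\phi=\ga\int_\Si\be^\ast\,dA\ge0$. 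If $\la_1<0$, writing $\la_1=-\om^2$ this reads $\phi''+\om^2\phi\le0$ for a positive $\phi$ on all of $\R$, which is impossible (a Wronskian comparison of $\phi$ with $\sin(\om(t-a))$ on an interval $[a,a+\pi/\om]$ yields a contradiction). Hence $\la_1\ge0$, so $\Si$ is stable.

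I expect the main obstacle to be the renormalization step: producing a \emph{nondegenerate} positive limit $\hat u_\infty$. This relies on the uniform Harnack inequality of Proposition~\ref{Prop: LocalEst}(3) and the uniformly bounded geometry of the $\Si_i$ near $\Si$, so that the normalization at $p_i$ controls $\hat u_i$ on fixed compact pieces of $\Si\times\R$ independently of $i$, and — in case (b) — on the estimate $s_j/c_i\le\mu_2/\sqrt{1-c_i^2}$, without which the limiting equation could degenerate. The remaining ingredients (the variation identity for $u_i$, the splitting $\cL_{\Si\times\R}=-\pa_t^2+\cL_\Si$, and the one-dimensional comparison argument) are routine.
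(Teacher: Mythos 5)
Your proof is correct, but it is a genuinely different route from the one the paper takes. The paper argues by contradiction with an explicit barrier construction: assuming $\la_1(\cL_\Si)=-\al^2<0$, it uses the unstable principal eigenfunction $\phi>0$ together with the model function $\eta(t)=\arctan(t+1)-\arctan(1)$ to perturb the cylinder $\Si\times\R$ into hypersurfaces-with-boundary of expansion strictly $>\Th$ (resp. $<\Th$), reads these off as a sub- and super-solution $f_\ast,f^\ast$ of $\rH-\rK=\Th$ in an annular neighborhood of $\Si$, and concludes that these barriers would prevent the translated regularized solutions from blowing up at $\Si$, contradicting the formation of $\Si$ in the limit. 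The delicate point there is finding a $T$ with $T''+\al^2T$ sign-definite on a half-line with controlled derivative; the paper notes the barrier proposed in \cite{AEM} had a glitch that Eichmair's choice of $\eta$ repairs. Your argument is instead the classical "pass the Jacobi field to the limit" strategy of \cite{AM}: you observe that $u_i=\lan\nu_i,-\pa_t\ran$ satisfies $\hat L_iu_i=\lan\D^{\Si_i}\th_{\Si_i},\pa_t^\top\ran=s_j(1-u_i^2)\ge0$ via $t$-translation invariance of the expansion functional, renormalize at a point and use the Harnack/Schauder bounds from Proposition~\ref{Prop: LocalEst} (together with the uniform bound $|s\D f_s|\le\mu_2$, which gives $s_j/c_i\le\mu_2/\sqrt{1-c_i^2}$ and keeps the limiting equation from degenerating) to extract a positive limit $\hat u_\infty$ solving $\cL_{\Si\times\R}\hat u_\infty=\ga\ge0$, split $\cL_{\Si\times\R}=-\pa_t^2+\cL_\Si$, pair against a positive principal eigenfunction $\be^\ast$ of the adjoint $\cL_\Si^\ast$ (whose principal eigenvalue also equals $\la_1$), and rule out $\la_1<0$ by a Sturm/Wronskian comparison for the resulting positive solution of $\phi''+\om^2\phi\le0$ on all of $\R$. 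Both routes are sound; yours avoids the bespoke model function and has the advantage of being the more robust, standard argument, while the paper's construction, once the correct $\eta$ is in hand, is shorter and more directly geometric. The only technicality worth spelling out in your write-up is the identification of functions on $\Si_i$ with functions on $\Si\times\R$ through local graph representations when you say "the coefficients of $L_i$ converge locally uniformly to those of $\cL_{\Si\times\R}$," but this is standard given the uniform $C^{3,\al}$ control in Proposition~\ref{Prop: LocalEst}(2).
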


\section{Limits of regularized solutions}
\subsection{Capillary blowdown limit}
Recall that for every $s\in (0,1]$ there exists a unique smooth regularized solution $f_s$ such that
\begin{align*}
    (g^{ij}- \frac{f_s^if_s^j}{1+|\D f_s|^2})\Big(\frac{\D_i\D_j f_s}{\sqrt{1+|\D f_s|^2}} - k_{ij}\Big) = sf_s.
\end{align*}
satisfying $\lim_{x\to \infty} f_s(x) = 0$ at each infinity end. The capillary term $u_s := sf_s$ in regularized equations will play an important role in our analysis. In \cite{SY2} R. Schoen and S.T. Yau proved by maximum principle argument that there are constants $\mu_1 = \max_M |\tr_g k|$ and $\mu_2 = \mu_2(|\mathrm{Ric}|_{C^0(M)}, |k|_{C^1(M)})$ such that in $M$
\begin{align}\label{universal bound of expansion}
    |u_s| = |sf_s|\leq \mu_1, \qquad |\D u_s| = |s\D f_s|\leq \mu_2.
\end{align}
Let $s_j\to 0^+$ be any decreasing sequence such that $f_0 := \lim_{s\to 0^+}f_s$ is a smooth function, and let $\Om_0$, $\Om_+$ and $\Om_-$ be disjoint open sets as stated in Proposition \ref{prop: SY Jang equation}. By Arzela-Ascoli theorem, a subsequence of functions $u_{s_j}$ converges uniformly on $M$ to a Lipschitz function $u\in C^{0,1}(M)$ satisfying 
\begin{align}
    \begin{cases}
    u = 0 \quad \mbox{in $\Om_0$},\\
    u \ge 0 \quad \mbox{in $\Om_+$},\\
    u \le 0 \quad \mbox{in $\Om_-$}.
    \end{cases}
\end{align}
We call $u$ a \emph{capillary blowdown limit} of regularized solutions $f_s$. Let $\Om$ be a connected component of $\Om_+$, which is bounded by Proposition \ref{prop: SY Jang equation}. For simplicity, throughout the present paper we will prove most of the propositions only for connected components of $\Om_+$ and all statements corresponding to $\Om_-$ hold analogously. From now on, we will fix the selection of decreasing sequence $s_j\to 0+$, Lipschitz blow-down limit $u := \lim u_{s_j}$ and connected component $\Om\subset \Om_+$.

Recall that by definition $f_{s_j}\to +\infty$ in $\Om$. In order to study the limit behaviour of $f_{s_j}$ as $j\to \infty$, it is necessary to translate down these regularized solutions in an appropriate manner. It is natural to consider a sequence of reference points $\{x_j\}$ in $\bar{\Om}$ to keep track of the evolution of regularized solutions. For every $j$, we define the translated solution according to the reference point $x_j$ to be
\begin{equation*}
	\tilde{f}_{s_j}^{(x_j)}(\cdot):= f_{s_j}(\cdot)- f_{s_j}(x_j) \quad \mbox{so that} \quad \tilde{f}_{s_j}^{(x_j)}(x_j) = 0.
\end{equation*}
Thus, the regularized equation (\ref{eq: regularized eq}) reads
\begin{align}\label{Eq:TransRegEq}
    \rH(\tilde{f}_{s_j}^{(x_j)}) - \rK(\tilde{f}_{s_j}^{(x_j)}) = s_j f_{s_j}
\end{align}
since the left hand side of regularized equation is invariant under vertical translation. For every sequence $s_j\to 0^+$, the local estimates in Proposition \ref{Prop: LocalEst} and Arzela-Ascoli theorem allow us to find a convergent subsequence of $\gr(\tilde{f}_{s_j}^{(x_j)})$ on the left hand side of (\ref{Eq:TransRegEq}) if we select suitable reference points; the observation (\ref{universal bound of expansion}) and Arzela-Ascoli theorem allow us to find a convergent subsequence of capillary terms (expansion functions) on right hand side of (\ref{Eq:TransRegEq}) in closure of blowup region $\Om$.

The following basic lemma shows that any non-empty subsequential limit must take place in a certain level-set of the capillary blow-down limit $u$.
\begin{lem}\label{Lem:LevelSet}
Suppose the sequence of reference points $\{x_j\}\subset \Om$ converges to $x_0\in \bar{\Om}$. Set the value $\Th: = u(x_0)$. Then
\begin{enumerate}
    \item[(1)] $\Th = \lim u_{s_j}(x_j)$.
    \item[(2)] If $x\in E_\Th^+(u)$, then $\lim \tilde{f}_{s_j}^{(x_j)}(x) = +\infty$; If $x\in E_{\Th}^- (u)$, then $\lim \tilde{f}_{s_j}^{(x_j)}(x) = -\infty$. Therefore, any subsequential limit of graph($\tilde{f}_{s_j}^{(x_j)}$) lies in $E_{\Th}(u)\times \R$ provided it exists. 
\end{enumerate}
\end{lem}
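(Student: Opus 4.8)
The plan is to reduce everything to the elementary identity
\begin{align*}
    \tilde{f}_{s_j}^{(x_j)}(x) = f_{s_j}(x) - f_{s_j}(x_j) = \frac{u_{s_j}(x) - u_{s_j}(x_j)}{s_j},
\end{align*}
which is immediate from the definitions $u_{s_j} = s_j f_{s_j}$ and $\tilde{f}_{s_j}^{(x_j)} = f_{s_j} - f_{s_j}(x_j)$. Every assertion of the lemma then becomes a statement about the numerator $u_{s_j}(x) - u_{s_j}(x_j)$, and I would deduce it using only that $u_{s_j}\to u$ uniformly on $M$ (the defining property of the capillary blowdown limit along the fixed subsequence $s_j\to 0^+$) together with continuity of the Lipschitz function $u$.

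For part (1): since $x_j\to x_0$, $u$ is continuous, and $u_{s_j}\to u$ uniformly, one has
\begin{align*}
    |u_{s_j}(x_j) - u(x_0)| \le \|u_{s_j} - u\|_{C^0(M)} + |u(x_j) - u(x_0)| \longrightarrow 0,
\end{align*}
so $\lim u_{s_j}(x_j) = u(x_0) = \Th$. For part (2): fix $x\in E_\Th^+(u)$, i.e. $u(x) > \Th$. Combining $u_{s_j}(x)\to u(x)$ with part (1) gives $u_{s_j}(x) - u_{s_j}(x_j)\to u(x) - \Th > 0$, hence $u_{s_j}(x) - u_{s_j}(x_j) \ge \tfrac12(u(x)-\Th)$ for all large $j$; dividing by $s_j\to 0^+$ yields $\tilde{f}_{s_j}^{(x_j)}(x)\to +\infty$. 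The case $x\in E_\Th^-(u)$ is identical with the inequality reversed. Finally, to see that a subsequential limit $S_\infty$ of $\gr(\tilde{f}_{s_j}^{(x_j)})$ lies in $E_\Th(u)\times\R$: given $(p,t)\in S_\infty$, choose $(p_j,t_j)\in\gr(\tilde{f}_{s_j}^{(x_j)})$ with $(p_j,t_j)\to(p,t)$; then $t_j = s_j^{-1}\big(u_{s_j}(p_j) - u_{s_j}(x_j)\big)$, and $u_{s_j}(p_j)\to u(p)$ exactly as above (uniform convergence plus continuity of $u$), so if $u(p)\ne\Th$ the right-hand side would tend to $\pm\infty$, contradicting $t_j\to t\in\R$; hence $u(p)=\Th$.

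I do not anticipate a genuine obstacle: this is essentially bookkeeping once the displayed identity is written down. The only point needing a little care is the last assertion, where the base points $p_j$ of the approximating graph points must be allowed to move (they need not equal $p$) — this is precisely where uniform, rather than merely pointwise, convergence of $u_{s_j}$ is used, and it also tacitly fixes the meaning of ``subsequential limit'' here as the local Hausdorff (Kuratowski) limit of the hypersurfaces $\gr(\tilde{f}_{s_j}^{(x_j)})$ in $M\times\R$.
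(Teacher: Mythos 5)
Your proof is correct and follows the same route as the paper's: rewrite $\tilde{f}_{s_j}^{(x_j)}(x)$ as $s_j^{-1}\big(u_{s_j}(x)-u_{s_j}(x_j)\big)$, use uniform convergence of $u_{s_j}$ to $u$ plus continuity of $u$ to pin down the numerator, and divide by $s_j\to 0^+$. The only place you go beyond the paper is the final ``Therefore'' clause, which the paper leaves as an immediate consequence; your argument letting the base points $p_j$ move and noting that uniform (not merely pointwise) convergence of $u_{s_j}$ is what makes this work is a reasonable and correct fleshing-out.
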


\begin{proof}
(1) It follows immediately from uniform convergence and equicontinuity of $u_{s_j}$ in $\bar{\Om}$.\\
(2) Suppose $x\in E_\Th^+(u)$, then $a := u(x) - \Th>0$. Since $\lim u_{s_j}(x) = u(x)$ and $\lim u_{s_j}(x_j) = \Th$ uniformly, for any sufficiently large $j$
\begin{align*}
    u_{s_j}(x) > u(x) - \frac{a}{4} = \Th + \frac{3a}{4}
\end{align*}
and 
\begin{align*}
    u_{s_j}(x_j) < \Th + \frac{a}{4}.
\end{align*}
If follows that for any sufficiently large $j$
\begin{align*}
    \tilde{f}_{s_j}^{(x_j)}(x) &= \frac{1}{s_j} \big(u_{s_j}(x) - u_{s_j}(x_j)\big)\\
    &> \frac{1}{s_j}[(\Th+\frac{3a}{4}) - (\Th + \frac{a}{4})]\\
    &= \frac{a}{2s_j} \to +\infty.
\end{align*}

If $x\in E_{\Th}^-(u)$, then $\lim_{j\to \infty} \tilde{f}_{s_j}^{(x_j)}(x) =  -\infty$ holds analogously.
\end{proof}

\subsection{The shape of limit of regularized solutions in blowup regions}
In this subsection, we will charaterize the geometry of limits of translated regularized solutions.

In the following theorem we show that any limit graph of caps of $f_{s_j}$ satisfies the \emph{constant expansion equation}, which is an analogue of constant mean curvature equation in spacetime setting. 
\begin{thm}[Shape of cap]\label{thm:interior of level set} Let $\Th :=  \max_{\bar{\Om}} u \geq 0$. There exists a sequence of reference points $\{x_j\}\subset \Om$, a subsequence $\{j'\}\subset \N$, and  a non-empty maximal domain $U\subset u^{-1}(\Theta)\cap \bar{\Om}$ such that $\tilde{f}_{s_{j'}}^{(x_{j'})}$ converges smoothly to a function $\tilde{f}$ in $U$ satisfying the constant expansion equation:
\begin{align}\label{Eq:ConstExp}
    \mathrm{H}(\tilde{f}) - \mathrm{K}(\tilde{f}) = \Theta  \quad \mbox{and $\tilde{f}(x)\to -\infty$ as $U\ni x\to \pa U$}.
\end{align}
Each connected component $\tilde{\Si}$ of $\pa U$ is a closed properly embedded smooth surface in $u^{-1}(\Th) \cap \bar{\Om}$ with constant expansion $\rH_{\tilde{\Si}} - \rK_{\tilde{\Si}}  = \Th$ where $\rH_{\tilde{\Si}}$ is computed with respect to the unit normal of $\tilde{\Si}$ pointing into $U$.
\end{thm}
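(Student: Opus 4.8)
The plan is to extract a subsequence along which both sides of the translated regularized equation \eqref{Eq:TransRegEq} converge, then identify the limit using Lemma~\ref{Lem:LevelSet} and standard elliptic/geometric-measure arguments. First I would choose the reference points: since $\Th = \max_{\bar\Om} u$ and $u_{s_j}\to u$ uniformly on $\bar\Om$, pick $x_j\in\Om$ with $u_{s_j}(x_j)\ge \max_{\bar\Om}u_{s_j} - 1/j$; any accumulation point $x_0$ of $\{x_j\}$ then satisfies $u(x_0)=\Th$ by the uniform convergence, and after passing to a subsequence we may assume $x_j\to x_0$. Now apply Proposition~\ref{Prop: LocalEst} to $\tilde f_{s_j}^{(x_j)}$, which solves $\rH(\tilde f_{s_j}^{(x_j)})-\rK(\tilde f_{s_j}^{(x_j)}) = s_jf_{s_j}$ with right-hand side uniformly bounded in $C^1$ by \eqref{universal bound of expansion}: the graphs $\gr(\tilde f_{s_j}^{(x_j)})$ have uniform local $C^{3,\al}$ control near the point $(x_0,0)$ (which lies on every graph by construction). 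By Arzel\`a--Ascoli and a diagonal argument over an exhaustion, a further subsequence $\{j'\}$ gives a limit hypersurface through $(x_0,0)$ that is locally a $C^{3,\al}$ graph; simultaneously $u_{s_{j'}}\to u$ carries the right-hand sides to $\Th$ uniformly on $\bar\Om$.

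Next I would define $U$ as the connected component containing $x_0$ of the set of points where the limit remains a finite graph, i.e.\ where $\tilde f_{s_{j'}}^{(x_{j'})}$ stays locally bounded; on $U$ the limit function $\tilde f$ is smooth and satisfies $\rH(\tilde f)-\rK(\tilde f)=\Th$ by passing to the limit in the equation, using the local estimates to upgrade $C^0_{loc}$ convergence to $C^\infty_{loc}$. That $U\subset u^{-1}(\Th)\cap\bar\Om$ is exactly Lemma~\ref{Lem:LevelSet}(2): if some $x\in U$ had $u(x)>\Th$ then $\tilde f_{s_{j'}}^{(x_{j'})}(x)\to+\infty$, contradicting finiteness; and $u(x)\ge\Th$ is impossible to violate since $\Th=\max_{\bar\Om}u$, so $u\equiv\Th$ on $U$. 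The blowup at the boundary, $\tilde f(x)\to-\infty$ as $x\to\pa U$, follows because on one side of $\pa U$ the limit graph has become vertical: by maximality of $U$, points of $\pa U$ are not finite-graph points, and since the limit hypersurface is a smooth properly embedded surface in $M\times\R$ (uniform curvature bounds from Proposition~\ref{Prop: LocalEst}(1) prevent folding), its non-graphical part over $\pa U$ must be a vertical cylinder; the downward-pointing normal forces $\tilde f\to-\infty$ rather than $+\infty$. Here I would argue exactly as in Schoen--Yau's analysis of Jang-equation blowup, invoking the Harnack inequality Proposition~\ref{Prop: LocalEst}(3) for $\lan\nu,-\pa_t\ran$ to control how the normal tilts.

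For the boundary regularity claim I would show each component $\tilde\Si$ of $\pa U$ is, in the half tubular neighborhood on the $U$-side, the $C^{2,\al}$ limit of the level sets $\gr(\tilde f - a)$ as $a\to-\infty$, which converge to the cylinder $\tilde\Si\times\R$; this is again modeled on Proposition~\ref{prop: SY Jang equation}(2). The uniform curvature bound \eqref{universal bound of expansion} via Proposition~\ref{Prop: LocalEst}(1) shows $\tilde\Si$ is smooth and properly embedded; its closedness follows since $\bar\Om$ is compact (Proposition~\ref{prop: SY Jang equation}(2)) and $\pa U$ is closed in $\bar\Om$. Finally, writing the constant expansion equation for $\tilde f$ near $\tilde\Si$ in Fermi coordinates and letting the graph go vertical, the equation degenerates to $\rH_{\tilde\Si}-\rK_{\tilde\Si}=\Th$ with $\rH_{\tilde\Si}$ taken with respect to the normal pointing into $U$ --- this is the direction in which $\tilde f$ decreases to $-\infty$. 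The main obstacle I anticipate is the boundary analysis: proving that the non-graphical part of the limit is genuinely a vertical cylinder over a \emph{smooth closed} surface (rather than something with a lower-dimensional singular set or a non-proper limit), and pinning down the correct sign/normal convention in the limiting expansion equation; both require carefully transplanting the stability-and-Harnack machinery of \cite{SY2} to the translated sequence, now with a nonzero constant $\Th$ on the right-hand side.
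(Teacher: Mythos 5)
Your overall strategy matches the paper's: translate to fix the graph at $(x_0,0)$, use the local estimates in Proposition~\ref{Prop: LocalEst} and Arzel\`a--Ascoli to extract a convergent subsequence of graphs, use Lemma~\ref{Lem:LevelSet} to pin the domain inside $u^{-1}(\Th)\cap\bar\Om$, and then translate $\tilde f$ vertically and use the Harnack inequality to identify $\pa U$ as a union of closed CES.

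There is, however, a genuine gap in your choice of reference points, and it is precisely the gap you flag at the end (``pinning down the correct sign/normal convention''). You pick $x_j\in\Om$ with $u_{s_j}(x_j)\ge \max_{\bar\Om}u_{s_j}-1/j$, i.e.\ a near-maximizer of $u_{s_j}=s_jf_{s_j}$. This yields only
\[
\tilde f_{s_j}^{(x_j)}(x)= f_{s_j}(x)-f_{s_j}(x_j)\le \frac{1}{j\,s_j}\quad \text{on }\bar\Om,
\]
and the bound $\frac{1}{j\,s_j}$ is not controlled (e.g.\ $s_j=e^{-j}$ makes it blow up). The paper instead takes $x_j$ to be an \emph{exact} maximizer of $f_{s_j}$ over $\bar\Om$, which gives $\tilde f_{s_j}^{(x_j)}\le 0$ on $\bar\Om$ identically. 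That uniform one-sided bound does two essential jobs that your argument lacks. First, it rules out cylindrical convergence of the component of the limit through $(x_0,0)$: a cylinder $\Si\times\R$ contains points with $t>0$, contradicting $\tilde f_{s_{j'}}^{(x_{j'})}\le 0$, so you know the limit component is a graph over an open $U$ (your definition of $U$ as ``where the limit stays a finite graph'' does not itself preclude that $U$ is a single CES or empty near $x_0$). Second, it forces $\tilde f\le 0$ on $U$, which is what makes $\tilde f\to-\infty$ (rather than $+\infty$) on $\pa U$. Your stated reason — ``the downward-pointing normal forces $\tilde f\to-\infty$ rather than $+\infty$'' — does not work: the normal on \emph{any} graph is downward-pointing, regardless of which way $\tilde f$ blows up at the boundary, and indeed Theorem~\ref{thm: local convergence} exhibits maximal domains where $\tilde f\to+\infty$ on part of $\pa U$. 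The fix is simple: replace your near-maximizers of $u_{s_j}$ by exact maximizers of $f_{s_j}$ over $\bar\Om$ (equivalently, of $u_{s_j}$, since $s_j>0$), which lie in $\Om$ for $j$ large because $f_{s_j}\to+\infty$ on $\Om$; the rest of your outline then closes as you describe and agrees with the paper.
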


\begin{rmk}
\begin{enumerate}
	\item[(1)] $U$ is called the {\bf maximal domain} of solution $\tilde{f}$ to constant expansion equation (\ref{Eq:ConstExp}) in the sense that $\tilde{f}$ blows up on approach to $\pa U$ and hence $\tilde{f}$ can not extend to any smooth solution to (\ref{Eq:ConstExp}) defined in a proper superset of $U$.
	\item[(3)] $u$ has constant value $\Th$ in $\bar{U}$.
	\item[(2)] $\Th$ in general could be 0. This means that $u$ is identically 0 in the blowup region $\Om$. We will discuss more properties of $\Om$ in Section 6 when this special case occurs.
\end{enumerate}
\end{rmk}

\begin{proof}
Recall that $\bar{\Om}$ is compact. For every $j\in \N$, pick reference point $x_j\in \bar{\Om}$ such that $f_{s_j}(x_j) = \max_{\bar{\Om}} f_{s_j}$. We can select a convergent subsequence $x_{j'}$ with $x_0\in \bar{\Om}$. Observe that $\tilde{f}_{s_{j'}}^{(x_{j'})}$ is a solution to (\ref{Eq:TransRegEq}) and $(x_{j'}, 0) \in \gr(\tilde{f}_{s_{j'}}^{(x_{j'})})$ converges to $(x_0, 0)$. By the local $C^{3,\al}$-estimate in Proposition \ref{Prop: LocalEst} in a neighborhood of $(x_0, 0)$ and Arzela-Ascoli theorem, we may assume by passing to a further subsequence that $\gr(\tilde{f}_{s_{j'}}^{(x_{j'})})$ converges to a properly embedded submanifold in $C_{loc}^{2,\al}$-sense. Let $\tilde{S}$ denote the connected component of the limit submanifold containing $(x_0, 0)$. Since $\tilde{f}_{s_{j'}}^{(x_{j'})} \leq 0$ in $\bar{\Om}$ for every $j'$, it follows from the Harnack inequality in Proposition \ref{Prop: LocalEst} that the component $\tilde{S}$ is a graph of a $C_{loc}^{2, \al}$ function $\tilde{f}\le 0$ defined in an open neighborhood $U$ of $x_0$ and approaching to $-\infty$ on approach to $\pa U$. Observe that $\lim \tilde{f}_{s_{j'}}^{(x_{j'})}(x) = -\infty$ if $x\in M\bsls \Om_+$, so $U$ is contained in $\Om$ and $x_0$ is away from $\pa\Om$. Combining Lemma \ref{Lem:LevelSet} together with the $C_{loc}^{2, \al}$ convergence of $\tilde{f}_{s_{j'}}^{(x_{j'})}$ and uniform convergence of $u_{j'}$ on two sides of equations (\ref{Eq:TransRegEq}), $U$ is a subset of $u^{-1}(\Th) \cap \Om$ and hence $\tilde{f}$ satisfies equation (\ref{Eq:ConstExp}) in $U$. By standard elliptic theory, $\tilde{f}$ is smooth.

Note that the equation (\ref{Eq:ConstExp}) is invariant under vertical translation. For any $a\in R$, $\tilde{f} + a$ satisfies equation (\ref{Eq:ConstExp}). By local estimates in Proposition \ref{Prop: LocalEst} and Arzela-Ascoli theorem, there is a sequence $a_i \to +\infty$ such that $\gr(\tilde{f} + a_i)$ converge to a three dimensional submanifold in $M\times \R$ in $C^{2,\al}$-sense. Remark that we only need to consider $a\to +\infty$ since $\tilde{f}\le 0$. By the Harnack inequality in Proposition \ref{Prop: LocalEst}, each component of the limit submanifold is a cylinder over a closed surface in $\pa U$, denoted by $\tilde{\Si} \times \R$. Since $\tilde{f} + a_i$ satisfies equation (\ref{Eq:ConstExp}) for all $i$,  $C_{loc}^{2,\al}$-convergence implies that $\tilde{\Si}$ with compatible unit normal satisfies the same constant expansion equation: $\rH_{\tilde{\Si}}- \rK_{\tilde{\Si}} = \Th$. 
\end{proof}

\begin{cor}\label{cor: local extrema}
Let $\Th \ge 0$. Suppose $Z$ is a connected component of $u^{-1}(\Th) \cap \bar{\Om}$ in which $u$ attains local maximum (resp. minimum). Namely, there exists an open neighborhood $O$ of $Z$ such that for all $x\in O\bsls Z$.
\begin{align*}
    u(x)< \Th \quad \mbox{(resp. $u(x) > \Th$).}
\end{align*} 
Then there exists a sequence of reference points $\{x_j\}\subset Z$, a subsequence $\{j'\}\subset\N$ and a non-empty maximal domain $U\subset Z$ such that $\tilde{f}_{s_{j'}}^{(x_{j'})}$ converges smoothly to a function $\tilde{f}$ in $U$ satisfying constant expansion equation:
\begin{align}\label{Eq:ConstExp2}
    \mathrm{H}(\tilde{f}) - \mathrm{K}(\tilde{f}) = \Th \quad \mbox{and $\tilde{f}(x)\to -\infty$ (resp. $+\infty$) as $U\ni x\to \pa U$}.
\end{align}
Each connected connected component $\tilde{\Si}$ of $\pa U$ is a closed properly embedded smooth surface in $Z$ with constant expansion $\rH_{\tilde{\Si}} - \rK_{\tilde{\Si}}  = \Th$ where $\rH_{\tilde{\Si}}$ is computed with respect to the unit normal of $\tilde{\Si}$ pointing inside of $U$ (resp. pointing outside of $U$).
\end{cor}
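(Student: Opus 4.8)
The plan is to reduce Corollary \ref{cor: local extrema} to Theorem \ref{thm:interior of level set} by a localization argument, exploiting that the hypothesis on $Z$ — namely that $u$ attains a strict local maximum along $Z$ inside some open neighborhood $O$ — lets us mimic the proof of the theorem with $O$ replacing $\bar\Om$ and with the reference points chosen to track the maximum of $f_{s_j}$ over a suitable compact piece near $Z$. First I would fix a compact neighborhood $\bar O' \subset O$ of $Z$ such that $u < \Th$ on $\bar O' \setminus Z$ (shrinking $O$ if necessary and using compactness of $Z$, which follows from Proposition \ref{prop: SY Jang equation} since $\bar\Om$ is compact and $Z$ is closed). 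Then for each $j$ I would pick $x_j \in \bar O'$ with $f_{s_j}(x_j) = \max_{\bar O'} f_{s_j}$ and pass to a convergent subsequence $x_{j'} \to x_0 \in \bar O'$. The key point is that $x_0$ must lie in $Z$: if not, then $u(x_0) < \Th$, so by part (1) of Lemma \ref{Lem:LevelSet} we would have $\lim u_{s_{j'}}(x_{j'}) = u(x_0) < \Th$, while picking any point $z \in Z$ gives $u_{s_{j'}}(z) \to \Th > u(x_0)$, contradicting the fact that $x_j$ maximizes $f_{s_j}$ — more precisely, $\tilde f_{s_{j'}}^{(x_{j'})}(z) = \frac{1}{s_{j'}}(u_{s_{j'}}(z) - u_{s_{j'}}(x_{j'})) \to +\infty$ by the estimate in the proof of Lemma \ref{Lem:LevelSet}, contradicting $\tilde f_{s_{j'}}^{(x_{j'})} \le 0$ on $\bar O'$.

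With $x_0 \in Z$ established, I would run the same extraction as in Theorem \ref{thm:interior of level set}: the $C^{3,\al}$-estimates in Proposition \ref{Prop: LocalEst} together with Arzela-Ascoli give, after passing to a further subsequence, a $C^{2,\al}_{loc}$-limit submanifold $\tilde S$ through $(x_0,0)$; since $\tilde f_{s_{j'}}^{(x_{j'})} \le 0$ on $\bar O'$, the Harnack inequality of Proposition \ref{Prop: LocalEst} forces the component through $(x_0,0)$ to be a graph of a function $\tilde f \le 0$ over an open set $U \ni x_0$, with $\tilde f \to -\infty$ at $\pa U$. By part (2) of Lemma \ref{Lem:LevelSet} applied with this choice of reference points, $\lim \tilde f_{s_{j'}}^{(x_{j'})} = -\infty$ on $E_\Th^+(u)$ and $= +\infty$ on $E_\Th^-(u)$; since $\tilde f$ is finite on $U$, this shows $U \subseteq u^{-1}(\Th)$, and because $U$ is connected and contains $x_0 \in Z$, and $u < \Th$ on $\bar O' \setminus Z$, we get $U \subseteq Z$ (after possibly shrinking so $U \subseteq O'$; the blowup of $\tilde f$ at $\pa U$ plus the strict inequality off $Z$ prevent $U$ from escaping $Z$). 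Passing to the limit in equation (\ref{Eq:TransRegEq}) using the uniform convergence $u_{s_{j'}} \to u$ and the $C^{2,\al}_{loc}$-convergence of the graphs shows $\tilde f$ solves $\rH(\tilde f) - \rK(\tilde f) = \Th$; elliptic regularity upgrades $\tilde f$ to smooth. The boundary statement — each component $\tilde \Si$ of $\pa U$ is a closed smooth surface with $\rH_{\tilde\Si} - \rK_{\tilde\Si} = \Th$ with respect to the inward normal — follows verbatim from the second paragraph of the proof of Theorem \ref{thm:interior of level set}, sending $\tilde f + a_i$ to the cylindrical limit as $a_i \to +\infty$.

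For the resp.-case, where $u$ attains a strict local \emph{minimum} along $Z$, I would apply the maximum case to $-f_{s_j}$, which satisfies the regularized equation for the initial data $(M, g, -k)$ (the operators transform as $\rH(-f) = -\rH(f)$ and $\rK(-f) = -\rK(f)$, so $\rH(-f_s) - \rK(-f_s) = -s f_s = s(-f_s)$ with $k$ replaced by $-k$); equivalently, and more directly, one repeats the argument choosing $x_j$ to \emph{minimize} $f_{s_j}$ over $\bar O'$, so that $\tilde f_{s_{j'}}^{(x_{j'})} \ge 0$ and the Harnack inequality produces a graph $\tilde f \ge 0$ blowing up to $+\infty$ at $\pa U$, with $\pa U$-components having $\rH_{\tilde\Si} - \rK_{\tilde\Si} = \Th$ for the \emph{outward} normal (this orientation flip is exactly the one recorded in the statement). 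The sign bookkeeping in the variation formula for the expansion and the orientation of the normal on $\pa U$ is where I expect to have to be careful; in particular, identifying which unit normal the constant-expansion equation on $\tilde\Si$ is computed with respect to requires tracking the direction in which $\tilde f + a_i$ converges to the cylinder $\tilde\Si \times \R$.

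The main obstacle I anticipate is \emph{not} the limit extraction — that is routine given Proposition \ref{Prop: LocalEst} and Lemma \ref{Lem:LevelSet} — but rather verifying cleanly that the maximal domain $U$ produced this way actually lands inside $Z$ rather than merely inside $u^{-1}(\Th)$. The subtlety is that $u^{-1}(\Th) \cap \bar\Om$ may have several components besides $Z$, possibly accumulating near $Z$; the strict-extremum hypothesis ($u < \Th$ on $O \setminus Z$) is precisely what rules this out, but one must argue that the open graph $U \ni x_0$ cannot have a boundary point on $Z$ in its closure while also reaching a point of $\bar O' \setminus Z$ — here the key facts are that $\tilde f$ blows up exactly at $\pa U$, that $U$ is connected, and that on $\bar O' \setminus Z$ we have $u < \Th$ so (by Lemma \ref{Lem:LevelSet}) $\tilde f$ would be forced to $-\infty$ there, which is consistent only if such points lie in $\pa U$, i.e. $\bar U \cap (\bar O'\setminus Z)$ contributes only to $\pa U$, forcing $U \subseteq Z$.
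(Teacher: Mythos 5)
Your proposal is correct and follows essentially the same route as the paper's (terse) proof: pick the reference points as the argmax of $f_{s_j}$ over a compact neighborhood of $Z$, use Lemma \ref{Lem:LevelSet} to force $x_0\in Z$ and the translated solutions to diverge to $-\infty$ on the boundary of that neighborhood for large $j$, and then run the extraction and cylindrical-boundary argument of Theorem \ref{thm:interior of level set} locally; your careful explanation of why $U\subseteq Z$ (via $U\subseteq u^{-1}(\Th)$, $U$ connected, and $u<\Th$ on $\pa O'$) fills in exactly the step the paper compresses into one clause. One tiny slip in the resp.-case: $\rK(-f)\neq -\rK(f)$ for the same $k$ (in fact $\rK_k(-f)=\rK_k(f)$ since the correction term $f^if^j$ is even in $f$); the identity you actually need and use is $\rK_{-k}(-f)=-\rK_k(f)$, and in any case your alternative direct argument with $x_j$ minimizing $f_{s_j}$ over $\bar O'$ sidesteps this entirely.
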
 

\begin{rmk}
In Corollary \ref{cor: local extrema}, the assumption that $u$ attains its strict local maximum $\Th$ in $Z$ is equivalent to that $Z$ is component of $\pa E_\Th^-(u)\bsls \pa E_\Th^+(u)$.
\end{rmk}

\begin{proof}
We only point out the key steps for the case when $u$ attains a strict local maximum in $Z$. Note $Z$ is a closed subset of compact set $\Om$, so $Z$ is compact. We may assume $\bar{O}$ is compact. For every $j\in \N$, pick reference point $x_j\in \bar{O}$ so that $f_{s_j}(x_j) = \max_{\bar{O}} f_{s_j}$. By Lemma \ref{Lem:LevelSet}, $\tilde{f}_{s_j}^{(x_j)}(x)<0$ for all $x\in \pa O$ for all suficiently large $j$. Since $\tilde{f}_{s_j}^{(x_j)}(x_j) = 0$ for all $j$, $\tilde{f}_{s_j}^{(x_j)}$ attains maximum at interior point $x_j\in O$ and $\D\tilde{f}_{s_j}^{(x_j)}(x_j) = 0$ for large $j$. Apply the argument of previous proof, there exists a subsequence $x_{j'}$ converging to $x_0$ and a solution $\tilde{f}$ to (\ref{Eq:ConstExp2}) defined in the maximal domain $U\subset Z$ containing $x_0$. This implies that $x_0$ is an interior point of $Z$ and $x_{j'}\in Z$ for large $j$. Other results follows analogously as previous proof.
\end{proof}

Specifically, we can pick one fixed reference point $x_0\in \bar{\Om}$ and investigate the local limiting behavior of translated regularized solutions to (\ref{eq: regularized eq}) around $x_0$.
\begin{thm}[Local convergence]\label{thm: local convergence}
Let $x_0\in \Om$. Consider the sequence of translated functions $\tilde{f}_{s_j}^{(x_0)}$ satisfying $\tilde{f}_{s_j}^{(x_0)}(x_0) = 0$ for all $j$. There exists a subsequence $\{j'\}\subset \N$ such that one of the following statement is true.
\begin{enumerate}
    \item[(1)] (Graphical convergence) There exists a maximal domain $U_{x_0}\subset u^{-1}(u(x_0)) \cap \bar{\Om}$ containing $x_0$ such that $\tilde{f}_{s_{j'}}^{(x_0)}$ converges smoothly to a function $ \tilde{f}_{0}^{(x_0)}$ satisfying 
    \begin{align*}
        \rH(\tilde{f}_{0}^{(x_0)})-\rK(\tilde{f}_{0}^{(x_0)}) = u(\bar{U}_{x_0})\quad \mbox{and $|\tilde{f}_0^{(x_0)}|\to \infty$ on approach to $\pa U_{x_0}$}.
    \end{align*} 
    In particular, 
    \begin{align*}
        \lim_{j'\to \infty} |\D f_{s_{j'}}(x_0)| = \lim_{j'\to \infty} |\D\tilde{f}_0^{(x_0)}(x_0)| < +\infty,
    \end{align*}
    and
    \begin{align*}
        \lim_{j'\to \infty} \frac{\D f_{s_{j'}}(x_0)}{\sqrt{1 + |f_{s_{j'}}(x_0)|^2}} \quad \mbox{exists and has length $< 1$.}
    \end{align*}
    Each component $\Si$ of $\pa U_{x_0}$ is a closed smooth surface satisfying 
    \begin{align*}
        \rH_{\Si} - \rK_{\Si} = u(\bar{U}_{x_0}).
    \end{align*}
    Here, $\rH_\Si$ is computed with respect to the unit normal vector field $\nu$ which coincides with 
    \begin{align*}
        \nu(y) = \lim_{j'\to \infty} \frac{\D f_{s_{j'}}(y)}{\sqrt{1 + |\D f_{s_{j'}}(y)|^2}} \quad \mbox{for all $y\in \Si$.}
    \end{align*}

    \item[(2)] (Cylindrical convergence) There exists a closed smooth surface $\Si_{x_0} \subset u^{-1}(u(x_0)) \cap \bar{\Om}$ passing through $x_0$ such that $\gr(\tilde{f}_{s_{j'}}^{(x_0)})$ converges to $\Si_{x_0}\times \R$ smoothly. In particular, 
    \begin{align*}
        \lim_{j'\to \infty} |\D f_{s_{j'}}(x_0)| = +\infty,
    \end{align*}
    and 
    \begin{align*}
        \lim_{j'\to \infty} \frac{\D f_{s_{j'}}(x_0)}{\sqrt{1 + |\D f_{s_{j'}}(x_0)|^2}} \quad \mbox{exists and has length $= 1$.}
    \end{align*}
    The surface $\Si_{x_0}$ satisfies 
    \begin{align*}
        \rH_{\Si_{x_0}} - \rK_{\Si_{x_0}} = u(\Si_{x_0}).
    \end{align*}
    Here, $\rH_{\Si_{x_0}}$ is computed with respect to the unit normal vector field $\nu$ which coincides with
    \begin{align*}
        \nu(y) = \lim_{j'\to \infty} \frac{\D f_{s_{j'}}(y)}{\sqrt{1 + |\D f_{s_{j'}}(y)|^2}} \quad \mbox{for all $y\in \Si_{x_0}$.}
    \end{align*}
    As a consequence of the convergence, there exists $\de >0$ such that
    \begin{enumerate}
        \item[(a)]  $\lim_{j'\to \infty} \tilde{f}_{s_{j'}}^{(x_0)}(x) = +\infty$ for $x\in \cN_{\de}^+(\Si_{x_0}, \nu)$,
        \item[(b)] $\lim_{j'\to \infty} \tilde{f}_{s_{j'}}^{(x_0)}(x) = -\infty$ for $x\in \cN_{\de}^-(\Si_{x_0}, \nu)$.
    \end{enumerate}
\end{enumerate}
\end{thm}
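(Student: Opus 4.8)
The plan is to run the blowup-with-reference-point machinery from the proof of Theorem~\ref{thm:interior of level set} at the fixed base point $x_0$, but with reference points $x_j=x_0$ for all $j$. First I would record the basic data: $\tilde f_{s_j}^{(x_0)}$ solves (\ref{Eq:TransRegEq}) with right-hand side $u_{s_j}$, the graph passes through $(x_0,0)$, and by (\ref{universal bound of expansion}) together with the a priori estimates of Proposition~\ref{Prop: LocalEst} (which apply uniformly since $|u_{s_j}|\le\mu_1$, $|\D u_{s_j}|\le\mu_2$) the hypersurfaces $\gr(\tilde f_{s_j}^{(x_0)})$ have uniformly bounded $C^{3,\al}$-local graphical representations near any limit point. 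By Arzel\`a--Ascoli, pass to a subsequence $\{j'\}$ so that $\gr(\tilde f_{s_{j'}}^{(x_0)})$ converges in $C^{2,\al}_{loc}$ to a properly embedded hypersurface in $M\times\R$; let $\tilde S$ be the connected component through $(x_0,0)$. By the Harnack inequality (Proposition~\ref{Prop: LocalEst}(3)) applied to $\lan\nu,-\pa_t\ran$, the quantity $\lan\nu,-\pa_t\ran$ on $\tilde S$ is either uniformly bounded away from $0$ (case (1)) or identically $0$ (case (2)); this is precisely the graphical-versus-vertical dichotomy. Correspondingly, along the subsequence $u_{s_{j'}}\to u$ uniformly on $\bar\Om$, and Lemma~\ref{Lem:LevelSet}(1) at $x_j\equiv x_0$ gives $\lim u_{s_{j'}}(x_0)=u(x_0)$, so the limiting constant expansion value equals $u(x_0)$.

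In case (1), $\tilde S$ is locally the graph of a $C^{2,\al}$ function $\tilde f_0^{(x_0)}$ on a neighborhood of $x_0$; let $U_{x_0}$ be the maximal connected open set to which this graphical description extends. As in Theorem~\ref{thm:interior of level set}, combining $C^{2,\al}_{loc}$-convergence of the left-hand side with uniform convergence of the right-hand side of (\ref{Eq:TransRegEq}) shows $\rH(\tilde f_0^{(x_0)})-\rK(\tilde f_0^{(x_0)})=u(x_0)$ on $U_{x_0}$, and standard elliptic regularity upgrades $\tilde f_0^{(x_0)}$ to smooth. The identity $\tilde f_0^{(x_0)}(x)\to\pm\infty$ as $x\to\pa U_{x_0}$ holds because if the graph stayed bounded near a boundary point the graphical chart would extend, contradicting maximality; combined with Lemma~\ref{Lem:LevelSet}(2) this forces $U_{x_0}\subset u^{-1}(u(x_0))\cap\bar\Om$, and that $u$ is constant $=u(x_0)$ on $\bar U_{x_0}$ follows exactly as in Remark~(3) after Theorem~\ref{thm:interior of level set}. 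For the derivative statement: since $\gr(\tilde f_{s_{j'}}^{(x_0)})\to\tilde S$ in $C^1_{loc}$ and $\tilde S$ is graphical near $x_0$, $\D\tilde f_{s_{j'}}^{(x_0)}(x_0)=\D f_{s_{j'}}(x_0)$ converges to $\D\tilde f_0^{(x_0)}(x_0)$, which is finite; the normalized vector $\D f_{s_{j'}}(x_0)/\sqrt{1+|\D f_{s_{j'}}(x_0)|^2}$ then converges to the downward unit normal direction of $\tilde S$, of length $<1$. The boundary surface statements come from the secondary blowup $\tilde f_0^{(x_0)}+a_i$ with $a_i\to\pm\infty$ exactly as in the last paragraph of the proof of Theorem~\ref{thm:interior of level set}, and the stated formula for $\nu$ on $\Si$ is read off from $C^1$-convergence of the graphs there.

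In case (2), $\lan\nu,-\pa_t\ran\equiv0$ on $\tilde S$ forces $\tilde S$ to be vertical, i.e. invariant under the $\pa_t$-flow, hence of the form $\Si_{x_0}\times\R$ for a smooth surface $\Si_{x_0}\subset M$ passing through $x_0$ (one can argue via the other Harnack estimate $|\bar\D\log\lan\nu,-\pa_t\ran|\le c_4$, which shows $\lan\nu,-\pa_t\ran$ cannot transition between zero and nonzero, so verticality is global on the component). Since each $\gr(\tilde f_{s_{j'}}^{(x_0)})$ has expansion $u_{s_{j'}}$ and the convergence is $C^{2,\al}_{loc}$, the limit cylinder satisfies the cylinder version of the constant expansion equation, which for a vertical cylinder reduces to $\rH_{\Si_{x_0}}-\rK_{\Si_{x_0}}=u(\Si_{x_0})$ (and $u$ is constant on $\Si_{x_0}$, equal to $u(x_0)$, again by Lemma~\ref{Lem:LevelSet} and continuity); the normal formula is as before. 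That $|\D f_{s_{j'}}(x_0)|\to+\infty$ and the normalized vectors converge to a unit vector is the contrapositive of case (1): if $|\D f_{s_{j'}}(x_0)|$ had a bounded subsequence, $\tilde S$ would be graphical near $x_0$, putting us in case (1). Finally, for (a)--(b): work in Fermi coordinates $(y,\si)$ around $\Si_{x_0}$ on a tube $\cN_\de(\Si_{x_0})$; the $C^0_{loc}$-convergence of $\gr(\tilde f_{s_{j'}}^{(x_0)})$ to the cylinder $\{\si=0\}\times\R$ together with the uniform gradient bound from Proposition~\ref{Prop: LocalEst}(1) (which prevents the graph from oscillating across the tube) forces $\tilde f_{s_{j'}}^{(x_0)}\to+\infty$ on the $\si>0$ side and $-\infty$ on the $\si<0$ side, shrinking $\de$ if necessary. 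The main obstacle I expect is the rigorous verification in case (2) that the vertical limit is globally a product $\Si_{x_0}\times\R$ with $\Si_{x_0}$ smooth and embedded, and that the sign of $\tilde f_{s_{j'}}^{(x_0)}$ is controlled on a full two-sided tube rather than just near $x_0$ — this is where the two Harnack inequalities in Proposition~\ref{Prop: LocalEst}(3) and the gradient bound must be used carefully, in the spirit of the Schoen--Yau analysis near apparent horizons.
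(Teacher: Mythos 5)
Your proposal follows essentially the same route as the paper: apply the local $C^{3,\al}$ estimates and Arzel\`a--Ascoli to extract a $C^{2,\al}_{loc}$-convergent subsequence of graphs through $(x_0,0)$, use the Harnack inequality on $\lan\nu,-\pa_t\ran$ to obtain the graphical-versus-cylindrical dichotomy, recognize that the normalized gradient is the horizontal component of the downward Gauss map, identify the limiting expansion with $u(x_0)$ via Lemma~\ref{Lem:LevelSet}, and then defer to the proof of Theorem~\ref{thm:interior of level set} for the graphical case and the secondary vertical blowup for the boundary components. That is precisely what the paper does, and your additional details on the maximal domain, the derivative statement, and the sign behavior on the tube in case (2) are correct elaborations.

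One step is glossed over in a way the paper handles explicitly: the compatibility of the unit normal at a point $y \neq x_0$ lying on $\pa U_{x_0}$ (case (1)) or on $\Si_{x_0}$ (case (2)). You write that the formula for $\nu(y)$ is ``read off from $C^1$-convergence of the graphs there,'' but the graph of $\tilde f_{s_{j'}}^{(x_0)}$ over $y$ lives at height $\tilde f_{s_{j'}}^{(x_0)}(y)$, which is not a priori bounded as $j'\to\infty$, so there is no fixed compact neighborhood of $(y,0)$ in $M\times\R$ where you can invoke the convergence. The paper resolves this by \emph{re-centering}: pass to the translated sequence $\tilde f_{s_{j'}}^{(y)}$ (which has the same gradient as $f_{s_{j'}}$), whose graph passes through $(y,0)$, and observe that the local estimates and the Harnack inequality force any subsequential limit of $\gr(\tilde f_{s_{j'}}^{(y)})$ to be the cylinder over the component of $\pa U_{x_0}$ (resp.\ $\Si_{x_0}$) through $y$; then the normal formula follows from the convergence near $(y,0)$. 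Without this re-centering the claim about $\nu(y)$ does not quite close. This is a localized omission rather than a wrong idea, but you should make it explicit.
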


\begin{proof}
Since $\tilde{f}_{s_j}^{(x_0)}$ satisfies (\ref{Eq:TransRegEq}) and $(x_0, 0)\in \gr(\tilde{f}_{s_j}^{(x_0)})$ for all $j$, by the local estimate in Proposition \ref{Prop: LocalEst} we conclude that there exists a subsequence $\{j'\}$ such that graph$(\tilde{f}_{s_{j'}}^{(x_0)})$ converges to a properly embedded submanifold in $M\times \R$ in $C_{loc}^{2,\al}$-sense. Denote the component of the limit submanifold containing $(x_0, 0)$ by $\tilde{S}$. By the Harnack-type inequality in Proposition \ref{Prop: LocalEst}, $\tilde{S}$ is either graphical or cylindrical. 

Notice that $\D\tilde{f}_{s_{j}}^{(x_0)}(x) = \D f_{s_{j}}(x)$ for all $j\in \N , x\in M$, and the vector 
\begin{equation*}
    \frac{\D f_{s_j}}{\sqrt{1+ |\D f_{s_j}|^2}}
\end{equation*}
is the horizontal component of the downward unit normal vector field on $\gr(\tilde{f}_{s_j}^{(x_0)})$. For the graphical case, the results follow analogously as the proof of Theorem \ref{thm:interior of level set}. For the cylindrical case, by Lemma \ref{Lem:LevelSet} and $C_{loc}^{2, \al}$ convergence we have $\rH_{\Si_{x_0}} - \rK_{\Si_{x_0}} = u(x_0)$. Lastly, to check the compatibility of the unit normal of $\pa U_{x_0}$ or respectively $\Si_{x_0}$ at $y$, we may just pick $y$ as new reference point for the subsequence $f_{s_{j'}}$, then the limiting behavior of $f_{s_{j'}}^{(x_0)}$ near $y$, local estimate and Harnack inequality imply that any subsequence of $\gr(\tilde{f}_{s_{j'}}^{(y)})$ converges cylindrically to the component of $\pa U_{x_0} \times \R$ containing $(y, 0)$ or respectively $\Si_{x_0} \times \R$ and the original sequence converges in the same way.
\end{proof}

As an immediate application of the local convergence, we can show the existence of smooth closed constant expansion surface in any level set of $u$ containing a regular point.
\begin{cor}\label{cor:CES in level set} Suppose $x_0$ is a regular point of $u$ at value $\Th$. Then there exists a closed smooth embedded surface $\Si_{x_0}$ in $u^{-1}(\Th) \cap \bar{\Om}$ containing $x_0$ with constant expansion $\rH_{\Si_{x_0}} - \rK_{\Si_{x_0}} = \Th$. The unit normal vector field $\nu$ of $\Si_{x_0}$ chosen as in Theorem \ref{thm: local convergence} coincides with $\D u(x_0)/|\D u(x_0)|$ at $x_0$.
\end{cor}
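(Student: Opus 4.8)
The plan is to apply the local convergence dichotomy of Theorem~\ref{thm: local convergence} at the point $x_0$ and to rule out its graphical alternative, so that the cylindrical alternative delivers $\Si_{x_0}$ directly. First I would make the hypothesis explicit: that $x_0$ is a regular point of $u$ at value $\Th$ means that $u$ is differentiable at $x_0$, with $u(x_0) = \Th$ and $\D u(x_0) \neq 0$; this is precisely what is needed for the final assertion about $\nu(x_0)$ to be meaningful. Feeding $x_0$ into Theorem~\ref{thm: local convergence} produces a subsequence $\{j'\}$ along which one of the two alternatives holds. Alternative~(1) cannot occur: there $U_{x_0}$ would be an \emph{open} neighbourhood of $x_0$ contained in $u^{-1}(u(x_0)) = u^{-1}(\Th)$, so $u \equiv \Th$ near $x_0$ and hence $\D u(x_0) = 0$, contradicting regularity. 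Thus alternative~(2) holds, and it provides a closed smooth embedded surface $\Si_{x_0} \subset u^{-1}(\Th) \cap \bar\Om$ through $x_0$ with $\rH_{\Si_{x_0}} - \rK_{\Si_{x_0}} = u(\Si_{x_0}) = \Th$, whose normal $\nu$ satisfies $\nu(x_0) = \lim_{j'\to\infty} \D f_{s_{j'}}(x_0)/\sqrt{1 + |\D f_{s_{j'}}(x_0)|^2}$.

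It then remains to identify the direction of $\nu(x_0)$, which I would do in two steps. Since $u$ is constant, equal to $\Th$, along the smooth surface $\Si_{x_0}$ and is differentiable at $x_0$, the chain rule applied to $u$ composed with smooth curves in $\Si_{x_0}$ through $x_0$ forces $\D u(x_0) \perp T_{x_0}\Si_{x_0}$; as $\dim M = 3$, this yields $\D u(x_0) = c\,\nu(x_0)$ for some $c \in \R$, with $c \neq 0$ because $\D u(x_0) \neq 0$. To fix the sign of $c$ I would promote the one-sided blowup statements (2a)--(2b) of Theorem~\ref{thm: local convergence} to one-sided bounds on $u$ across $\Si_{x_0}$: multiplying $\tilde f_{s_{j'}}^{(x_0)} = f_{s_{j'}} - f_{s_{j'}}(x_0)$ by $s_{j'}$ gives the identity $s_{j'}\tilde f_{s_{j'}}^{(x_0)}(x) = u_{s_{j'}}(x) - u_{s_{j'}}(x_0) \to u(x) - \Th$, and since $\tilde f_{s_{j'}}^{(x_0)}(x) \to +\infty$ for $x \in \cN_{\de}^+(\Si_{x_0},\nu)$ (resp.\ $\to -\infty$ for $x \in \cN_{\de}^-(\Si_{x_0},\nu)$), the left-hand side is eventually $\geq 0$ (resp.\ $\leq 0$), so $u \geq \Th$ on $\cN_{\de}^+(\Si_{x_0},\nu)$ and $u \leq \Th$ on $\cN_{\de}^-(\Si_{x_0},\nu)$. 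Evaluating along the normal geodesic $t \mapsto \Up(x_0,t) = \exp_{x_0}(t\,\nu(x_0))$, which lies in $\cN_{\de}^+$ for $t \in (0,\de)$ and in $\cN_{\de}^-$ for $t \in (-\de,0)$, the Lipschitz function $t \mapsto u(\exp_{x_0}(t\nu(x_0)))$ is $\geq u(x_0)$ to the right of $0$ and $\leq u(x_0)$ to the left; its derivative at $0$, which equals $\lan \D u(x_0), \nu(x_0)\ran = c$, is therefore $\geq 0$, hence $c > 0$ and $\nu(x_0) = \D u(x_0)/|\D u(x_0)|$.

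The main obstacle is this last sign determination. The naive route --- ``the horizontal unit normals $\D f_{s_{j'}}(x_0)/\sqrt{1+|\D f_{s_{j'}}(x_0)|^2}$ converge, and $\D u_{s_{j'}} = s_{j'}\D f_{s_{j'}}$, so the limit must be $\D u(x_0)/|\D u(x_0)|$'' --- is not available, because $u$ is merely Lipschitz and $\D u_{s_{j'}}$ need not converge to $\D u$ at a differentiability point of $u$. The point of the argument above is that the one-sided statements (2a)--(2b), once rescaled by $s_{j'}$, use only the uniform ($C^0$) convergence $u_{s_{j'}} \to u$, which is exactly how the capillary blowdown limit is built; everything else is a direct invocation of Theorems~\ref{thm:interior of level set} and~\ref{thm: local convergence}, so I expect no further difficulty.
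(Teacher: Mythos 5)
Your proof is correct and follows essentially the same approach as the paper: rule out the graphical alternative of Theorem~\ref{thm: local convergence} because it would put $x_0$ in the interior of $u^{-1}(\Th)$, then use the cylindrical alternative together with the one-sided blowup (2a)--(2b) to orient $\nu$. Your sign argument rescales (2a)--(2b) by $s_{j'}$ to get one-sided bounds on $u$ across $\Si_{x_0}$, which is effectively the converse implication of Lemma~\ref{Lem:LevelSet} that the paper cites directly; the two formulations are equivalent and both avoid the unavailable naive route you rightly flag.
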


\begin{proof}
From assumption, $\D u(x_0)$ exists and $\D u(x_0)\neq 0$. By local linear approximation of $u$ around $x_0$, we can conclude that $x_0\in \pa E_\Th^-(u) \cap \pa E_\Th^+(u)$ and the tangent space $T_{x_0} E_\Th (u) = \{\D u(x_0)\}^{\perp}$. Since $x_0$ is not an interior point of $u^{-1}(\Th)$, Corollary \ref{thm: local convergence} implies that graphical convergence is impossible and there exists a closed smooth CES $\Si_{x_0}$ containing $x_0$ with expansion $\Th$. The direction of unit normal to $\Si_{x_0}$ at $x_0$ is determined by local linear approximation of $u$, Lemma \ref{Lem:LevelSet} and local limit behavior of $f_{s_j}^{(x_0)}$ in Theorem \ref{thm: local convergence} (2).
\end{proof}

\subsection{Stability of CES}
We will end this section by showing the stability of all closed smooth embedded CES in $M$ as boundary components of maximal domains or base sections of cylinders in subsection 3.2. The stability result for MOTS Proposition \ref{prop: stability for MOTS} was first proved by Andersson and Metzger in \cite{AM}. In the excellent survey paper \cite{AEM}, a simplified geometric argument was provided, but the constructed barrier functions did not work well. In the communication with Michael Eichmair, one of the authors of \cite{AEM}, he suggested a different model function to fix the glitch. The proof here essentially follows the idea for MOTS in \cite{AEM} with Eichmair's modification. 
\begin{prop}[Stability of CES]\label{prop: stability of CES}
The closed smooth CES which arise in Theorem \ref{thm:interior of level set}, Corollary \ref{cor: local extrema}, and Theorem \ref{thm: local convergence} (1) as boundaries of maximal domains and in Theorem \ref{thm: local convergence} (2) as bases of cylinders are stable.
\end{prop}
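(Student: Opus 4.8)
The plan is to show that each CES $\tilde\Si$ arising in these statements is a limit (in the $C^{2,\al}_{loc}$-sense) of the hypersurfaces $\gr(\tilde f^{(x_j)}_{s_j})$, appropriately translated and rescaled, which solve the regularized equation; the key point is that these approximating hypersurfaces carry a positive function — the vertical inner product $\lan \nu, -\pa_t\ran$ of the downward unit normal — which, via the Harnack-type inequality in Proposition \ref{Prop: LocalEst}, survives to the limit as a positive function on the cylinder $\tilde\Si\times\R$. First I would recall the key observation already exploited in Theorem \ref{thm: local convergence}: writing $\nu$ for the downward unit normal of $\gr(f_{s_j})$, its horizontal part is $\D f_{s_j}/\sqrt{1+|\D f_{s_j}|^2}$, which converges to the unit normal $\nu_{\tilde\Si}$ of $\tilde\Si$ chosen as in the statements (pointing into the maximal domain, or into the $+$-side of the cylinder). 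The vertical component $\lan\nu,-\pa_t\ran = 1/\sqrt{1+|\D f_{s_j}|^2}$ is positive, and by the Harnack-type inequality in Proposition \ref{Prop: LocalEst}(3) its logarithm has uniformly bounded gradient on the graph; hence on any compact set the ratios $\lan\nu,-\pa_t\ran/\lan\nu,-\pa_t\ran$ between two points stay bounded. In the cylindrical case $\lan\nu,-\pa_t\ran\to 0$, so to get a nontrivial limit function one must first renormalize: divide the (translated) vertical-component function on $\gr(\tilde f^{(x_j)}_{s_j})$ by its value at the reference point. The Harnack inequality guarantees that this normalized sequence is locally uniformly bounded above and below away from zero, so after passing to a subsequence it converges in $C^{2,\al}_{loc}$ to a positive function $\varphi$ on $\tilde\Si\times\R$, which (being vertically translation-invariant to leading order) descends to a positive function on $\tilde\Si$.

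Next I would identify the equation satisfied by $\varphi$. The function $\lan\nu,-\pa_t\ran$ on a hypersurface solving $\rH-\rK=F$ satisfies a linear second-order equation obtained from the Jacobi-type operator for the expansion functional — this is essentially the linearized-operator computation carried out in Section 2.3, specialized to the normal variation field $\pa_t$ restricted to the hypersurface (more precisely to its normal component $\lan\nu,-\pa_t\ran$, whose flow moves the graph by vertical translation, under which $\rH-\rK$ is invariant). Passing to the limit along the (rescaled) sequence, and using that the right-hand side $s_jf_{s_j}=u_{s_j}$ converges in the blowup region to the \emph{constant} $\Th$ (with gradient going to zero, by the uniform bound $|\D u_{s_j}|\le\mu_2$ combined with the fact that $u$ is constant $=\Th$ on the relevant level set), I would get that $\varphi$ satisfies exactly $\cL_{\tilde\Si}\varphi = 0$ on $\tilde\Si$, where $\cL_{\tilde\Si}$ is the linearized operator of expansion \eqref{op1} for the constant expansion surface $\tilde\Si$ with its chosen normal. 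Since $\varphi>0$, by the characterization of the principal eigenvalue via positive eigenfunctions (Krein-Rutman, as recalled after \eqref{op2}), the existence of a positive function in the kernel forces $\la_1(\cL_{\tilde\Si}) = 0\ge 0$; more carefully, a positive supersolution or solution bounds $\la_1$ from below by $0$, giving stability. In the graphical case one runs the same argument with $\varphi=\lan\nu,-\pa_t\ran$ itself (no renormalization needed), and the limit is taken along the sequence $\gr(\tilde f + a_i)$, $a_i\to\pm\infty$, already used in Theorem \ref{thm:interior of level set} to produce the cylinder; this again yields a positive $\cL_{\tilde\Si}$-harmonic function on $\tilde\Si$.

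The main obstacle I anticipate is making the limiting equation for $\varphi$ rigorous in the \emph{rescaled} cylindrical case: one must check that the renormalization constant (the value of $\lan\nu,-\pa_t\ran$ at the moving reference point) does not distort the equation — it does not, because $\cL$ is linear, so any positive multiple of a solution is a solution — but one must also verify that the limit $\varphi$ is genuinely positive everywhere on $\tilde\Si$ (not identically zero), which is exactly where the two-sided Harnack inequality in Proposition \ref{Prop: LocalEst}(3) is essential, and that $\varphi$ is independent of the vertical coordinate, which follows from the $C^{2,\al}_{loc}$ convergence of $\gr(\tilde f^{(x_j)}_{s_j})$ to the honest cylinder $\tilde\Si\times\R$ together with the translation-equivariance of the construction (as in the last paragraph of the proof of Theorem \ref{thm: local convergence}). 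A secondary technical point is that the error term coming from $\D(s_jf_{s_j})$ in the limiting equation must be shown to vanish; this uses that on $u^{-1}(\Th)\cap\bar\Om$ the limit $u$ is locally constant, so that $\D u_{s_j}\to 0$ there in a suitable weak sense, or alternatively one absorbs this term and notes that a positive solution of $\cL_{\tilde\Si}\varphi = \psi$ with $\psi$ of one sign still controls $\la_1$; I would spell out the sign bookkeeping to confirm it pushes $\la_1$ in the direction $\ge 0$. Once these points are settled, stability in all the listed cases follows uniformly from the single mechanism ``positive limit of vertical normal components $\Rightarrow$ positive (super)solution of $\cL_{\tilde\Si}$ $\Rightarrow$ $\la_1\ge 0$.''
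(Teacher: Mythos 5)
Your proposal takes a genuinely different route from the paper. The paper (following the geometric argument of \cite{AEM} with Eichmair's modification) argues by contradiction: if $\Si$ is unstable with $\la_1(\cL_\Si) = -\al^2 < 0$, it uses the principal eigenfunction $\phi$ together with the explicitly chosen profile $\eta(t) = \arctan(t+1)-\arctan(1)$ to build sub- and super-solutions $f_*, f^*$ of the constant expansion equation near $\Si$, and these barriers then prohibit the translated regularized solutions from blowing up exactly along $\Si$. Your approach instead extracts a ``Jacobi field'' from the vertical component of the normal and appeals to the ``positive solution implies nonnegative principal eigenvalue'' principle; this is closer in spirit to the Schoen--Yau argument for minimal graphs and to the original Andersson--Metzger proof. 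Both are legitimate strategies, but the barrier approach avoids the subtleties below.

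There are, however, several concrete gaps in your proposal. First, you claim the limit function $\psi$ on $\tilde\Si\times\R$ is $t$-independent and therefore descends to a function on $\tilde\Si$ to which Krein--Rutman applies directly. This is not justified by ``translation-equivariance'': positive solutions of $-\pa_t^2\psi + \cL_{\tilde\Si}\psi = 0$ on the cylinder need not be $t$-independent (e.g.\ $e^{\sqrt{\la_1}t}\beta(y)$ when $\la_1 > 0$), so the descent is illegitimate as stated. The correct and nontrivial step is to show that a positive function $\psi$ on the \emph{full cylinder} with $\cL_{\tilde\Si\times\R}\psi \geq 0$ forces $\la_1(\cL_{\tilde\Si}) \geq 0$. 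Since $\cL_{\tilde\Si}$ is non-self-adjoint, one way to do this is to integrate against the positive eigenfunction $\beta^*$ of the adjoint $\cL^*_{\tilde\Si}$: setting $f(t) = \int_{\tilde\Si}\psi(\cdot,t)\beta^*$, one gets $f'' \leq \la_1 f$ with $f > 0$ on all of $\R$, which is impossible if $\la_1 < 0$. You need something like this; a bare reference to Krein--Rutman on $\tilde\Si$ does not close the argument.

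Second, your treatment of the error term is not correct. On $\gr(\tilde f_{s_j}^{(x_j)})$, vertical translation is a symmetry of $\rH-\rK$ but not of the regularized equation, and a direct computation gives $\cL_G^{\mathrm{full}}\phi_j = s_j(1-\phi_j^2)$ where $\phi_j = \lan\nu,-\pa_t\ran$. After normalizing by $\phi_j(p_0)$, the right-hand side becomes $s_j(1-\phi_j^2)/\phi_j(p_0) \approx s_j\sqrt{1+|\D f_{s_j}(x_0)|^2}$. This is uniformly bounded (by $\sqrt{s_j^2 + \mu_2^2}$) but does \emph{not} vanish in general; your suggestion that it vanishes because ``$u$ is locally constant, so $\D u_{s_j}\to 0$'' is not valid, since uniform $C^0$ convergence of $u_{s_j}$ gives no pointwise control on $\D u_{s_j}$. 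What saves the argument is that the sign of the error is nonnegative, so $\psi$ is still a positive supersolution; you correctly flag that sign bookkeeping is required, but the bookkeeping must be done on the cylinder as above, not on $\tilde\Si$. Finally, contrary to your statement, the graphical case also requires renormalization: the function $\lan\nu,-\pa_t\ran$ on $\gr(\tilde f + a_i)$ still decays to $0$ as $a_i\to\infty$ near $\pa U$, so the normalization step is needed there too.

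In short: the Jacobi-field strategy is sound and would give an alternative proof, but as written it has a genuine gap in passing from a positive (super)solution on $\tilde\Si\times\R$ to $\la_1(\cL_{\tilde\Si}) \ge 0$, and the error-term analysis needs correction.
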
 

\begin{proof}
Suppose $(\Si, \nu)$ is a \emph{unstable} closed smooth surface in $(M, g, k)$ with constant expansion $\Th$ and $\la_1(\cL_\Si) =  -\alpha^2 <0$ for some $\al>0$. We will construct barrier functions in an open neighborhood of $\Si$. By Krein-Rutman theorem, there exists a strict positive function $\phi\in C^\infty(\Si)$ such that $\cL_\Si \phi = -\al^2 \phi$. If $\nu$ is extended by parallel transportation and $k$ is extended trivially in vertical direction, then the stability operator of $(\Si\times \R, \nu)$ with respect to $(M,g,k)$ is $\cL_{\Si\times \R} = -\pa_t^2 + \cL_\Si$. If we feed the stability operator with a test function of the form $T(t)\phi(x)$ where $T\in C^2(\R)$, then 
\begin{equation*}
	\cL_{\Si\times \R} \big(T(t)\phi(x)\big) =  -(T'' + \alpha^2 T)\phi(x).
\end{equation*}
In the model case $\al = 1$, consider the smooth function $\eta(t) =  \big(\arctan(t+1) - \arctan(1)\big)$. By numerical analysis, $\eta$ has the following properties: (1) $\mathrm{Range}(\eta) = (-\frac{3\pi}{4}, \frac{\pi}{4})$ and $\eta$ is strictly increasing with $\eta(0) = 0$. (2) $\eta'' + \eta$ has a unique real root $t_r \approx 0.6456$. In particular, for $t \in (-\infty, 1/2]$, $\eta''(t) + \eta(t) <0$ and the maximum is $\eta''(1/2)+\eta(1/2)\approx -0.0866$. For general $\al>0$, we may consider $T(t) = \eta(\al t)$. Then \begin{equation}
	\cL_{\Si\times \R} \big(T(t)\phi(x)\big)\geq -\big(\eta''(1/2)+\eta(1/2)\big)\al^2\min_\Si\phi >0 \quad \mbox{for}\quad t\in (-\infty, 1/(2\al)].
\end{equation}
For any sufficiently small $\vare>0$, the hypersurface 
\begin{equation*}
	\big\{\exp_{(x,t)}\big(\vare T(t)\phi(x)\nu(x)\big)\in M\times \R: (x,t)\in  \Si\times(-\infty, 1/(2\al)]\big\}
\end{equation*}
is a smooth hypersurface with boundary in $M\times \R$ whose expansion is \emph{strictly} greater than $\Th$ everywhere. Notice that $T$ is monotone, so the hypersurface can be express as the graph of a function $f_*: V\rightarrow (-\infty, 1/(2\al))$
where $V = \big\{\exp_x(s\phi(x)\nu(x))\in M: s\in (-3\pi\vare/4, \vare\eta(1/2)), x\in \Si\big\}$ is a open neighborhood of $\Si$ such that $0<f_*<1/(2\al)$ in the part of $V$ with $0<s<\vare\eta(1/2)$ and $f_*\to -\infty$ as $s\to -3\pi\vare/4^+$. Moreover, $f_*$ satisfying $\rH(f_*) -  \rK(f_*) > \Th$ is a sub-solution to equation $\rH-\rK=\Th$. Analogously, we may construct a super-solution $f^*$ satisfying $\rH(f^*)-\rK(f^*) < \Th$ associated to the hypersurface
\begin{equation*}
	\big\{\exp_{(t,x)}\big(-\vare T(-t)\phi(x)\nu(x)\big): (x,t)\in \Si\times [-1/(2\al),+\infty)\big\}.
\end{equation*}
defined in an open neighborhood of $\Si$.

If the CES $(\Si, \nu)$ which arises in the regularization limit as in the assumption is unstable, then the barrier functions constructed in the first paragraph prevent the translated regularized solutions from blowing up exactly at $\Si$. This contradicts to the formation of such CES.
\end{proof}

\section{Characterization of capillary blowdown limit}
\subsection{Capillary blowdown limit as viscosity solution}
We begin by replacing $f_s$ by $u_s/s$ in regularized equations (\ref{eq: regularized eq}). Then $u_s$ satisfies
\begin{align}\label{regeq2}
    (g^{ij}- \frac{u_s^iu_s^j}{s^2+|\D u_s|^2})\Big(\frac{\D_i\D_j u_s}{\sqrt{s^2+|\D u_s|^2}} - k_{ij}\Big) = u_s.
\end{align}
Let $u$ be a blowdown limit of regularized solutions to Jang equation. Now we make some heuristic assumptions that $u$ is $C^2$ and $\D u_{s_j}\to \D u$ for one sequence $s_j\to 0^+$. In the region $\{x: \D u(x)\neq 0\}$, the sequence of regularized equations (\ref{regeq2}) converges to the geometric equation
\begin{align}\label{Eq:ConstExpLevel}
    \Div_M \Big(\frac{\D u}{|\D u|}\Big) - \tr_g (k) + k\Big(\frac{\D u}{|\D u|}, \frac{\D u}{|\D u|}\Big) = u.
\end{align}
In addition, Corollary \ref{cor:CES in level set} is another clue that $u$ satisfies (\ref{Eq:ConstExpLevel}) in $\{x: \D u(x)\neq 0\}$. This geometric equation can be interpreted as follows: any regular level set of classical solution $u$ has constant expansion equal to the evaluation of $u$. By simple calculations, (\ref{Eq:ConstExpLevel}) is equivalent to 
\begin{align}\label{Eq:ConstExpLevel2}
    -\Div_M (\D u) + \D^2u\Big(\frac{\D u}{|\D u|},\frac{\D u}{|\D u|}\Big) + |\D u|\Big\{ u + \tr_g (k) - k\Big(\frac{\D u}{|\D u|}, \frac{\D u}{|\D u|}\Big)\Big\} = 0.
\end{align}
It is obvious that the equation is singular in the set $\{\D u = 0\}$, which is inevitable according to the existence of interior of level set at extremal value by Theorem \ref{thm:interior of level set}. It is necessary to find a weaker notion of solution. Since $u$ has already been a Lipschitz continuous function by construction, inspired by the work on level-set formulation of mean curvature flow done by L.C. Evans and J. Spruck \cite{ES}, we may expect viscosity solution is suitable notion of weak solution. Before we define viscosity solutions to (\ref{Eq:ConstExpLevel2}) on manifolds, we recall several terminologies introduced in \cite{AFS}.
\begin{defn}
\begin{enumerate}
    \item[(1)] Let $f:M \rightarrow [-\infty, \infty) $ a lower semi-continuous function. Define the \emph{second order superjet} of $f$ at $x$ by
\begin{align*}
    J^{2,+} f(x) = \{(d\varphi(x), d^2\varphi(x)):\mbox{ $\varphi \in C^2(M;\R)$, $f-\varphi$ attains a local maximum at $x$}\}
\end{align*}

    \item[(2)] Let $f:M \rightarrow (-\infty, \infty] $ a upper semi-continuous function. Define the \emph{second order subjet} of $f$ at $x$ by
\begin{align*}
    J^{2,-} f(x) = \{(d\varphi(x), d^2\varphi(x)):\mbox{ $\varphi \in C^2(M;\R)$, $f-\varphi$ attains a local minimum at $x$}\}
\end{align*}
\end{enumerate}

\end{defn}

\begin{rmk}
Let $x\in M$, $\zeta \in T_x^*M$, $A\in \mathcal{L}_{sym}^2(T_xM)$. Then the followings are equivalent:
\begin{enumerate}
    \item[(1)] $(\zeta, A)\in J^{2,+} f(x)$
    \item[(2)] $f(\exp_x(\eta)) \leq f(x) + \lan \zeta, \eta \ran_x + \frac{1}{2} \lan A \eta, \eta\ran_x + o(|\eta|_x^2)$
    \item[(3)] $(\zeta, A) \in J^{2,+} (f\circ \exp_x) (0_x)$ where $0_x$ is the origin in $T_x M$
    \item[(4)] $(\zeta, A) \in -J^{2,-} (-f)(x)$
\end{enumerate}
\end{rmk}

Let $x_n\to x$, $\zeta_n\in T_{x_n}^*M$ and $A_n\in \mathcal{L}_{sym}^2(T_{x_n}M)$. We denote by $\zeta_n \to \zeta\in T_x^*M$ if $\lan \zeta_n, V\ran_{x_n}\to \lan \zeta, V\ran_x$ for all smooth vector field $V$ near $x$ and we denote by $A_n\to A\in \mathcal{L}_{sym}^2(T_xM)$ if $\lan A V, V\ran_{x_n} \to \lan A V, V\ran_x$ for all smooth vector field $V$ near $x$.

\begin{defn}
\begin{enumerate}
    \item[(1)] Let $f:M \rightarrow [-\infty, \infty) $ a lower semi-continuous function. Define
\begin{align*}
    \bar{J^{2,+}} f(x) = \{(\zeta, A)\in T_x^*M \times \mathcal{L}_{sym}^2(T_x M): \exists x_n\to x, \exists (x_n, A_n)\in J^{2,+} f(x_n) \\\mbox{ such that } (x_n, f(x_n), \zeta_n, A_n) \to (x, f(x), \zeta, A)\}
\end{align*}

    \item[(2)] Let $f:M \rightarrow (-\infty, \infty] $ a upper semi-continuous function. Define 
\begin{align*}
    \bar{J^{2,-}} f(x) = \{(\zeta, A)\in T_x^*M \times \mathcal{L}_{sym}^2(T_x M): \exists x_n\to x, \exists (x_n, A_n)\in J^{2,-} f(x_n) \\\mbox{ such that } (x_n, f(x_n), \zeta_n, A_n) \to (x, f(x), \zeta, A)\}
\end{align*}
\end{enumerate}
\end{defn}

Now we are ready to define viscosity solutions to (\ref{Eq:ConstExpLevel2}). Let $x\in M$, $r\in \R$, $\zeta\in T_xM$, $A\in \mathcal{L}_{sym}^2(T_xM)$. Define
\begin{align*}
    \cF(x, r, \zeta, A) := -\tr_g A(x) + \lan A \frac{\zeta}{|\zeta|}, \frac{\zeta}{|\zeta|} \ran_x + |\zeta|_x \big\{ r + \tr_g k(x) - k(\frac{\zeta}{|\zeta|},\frac{\zeta}{|\zeta|})(x)\big\}
\end{align*}
and its degenerate form
\begin{align*}
    \cG(x, \zeta, A) :=  -\tr_g A(x) + \lan A\zeta, \zeta \ran_x.
\end{align*}

\begin{defn}
$u\in C^0(M)\cap L^\infty(M)$ is a \textbf{\emph{viscosity subsolution}} of equation (\ref{Eq:ConstExpLevel2}) if for all $x\in M$ either for all $(\zeta\neq 0, A)\in \bar{J^{2,+}} u(x)$
\begin{align*}
    \cF(x, u(x), \zeta, A) \leq 0,
\end{align*}
or for all $(0, A)\in \bar{J^{2,+}} u(x)$ there exists $\xi\in T_xM$ with $|\xi|_x \leq 1$
\begin{align*}
    \cG(x, \xi, A) \leq 0.
\end{align*}
Similarly, $u\in C^0(M)\cap L^\infty(M)$ is a \textbf{\emph{viscosity supersolution}} of equation (\ref{Eq:ConstExpLevel2}) if for all $x\in M$ either for all $(\zeta\neq 0, A)\in \bar{J^{2,-}} u(x)$ 
\begin{align*}
    \cF(x, u(x), \zeta, A) \geq 0,
\end{align*}
or for all $(0, A)\in \bar{J^{2,-}} u(x)$ there exists $\xi\in T_xM$ with $|\xi|_x \leq 1$
\begin{align*}
    \cG(x, \xi, A) \geq 0.
\end{align*}
$u\in C^0(M)\cap L^\infty(M)$ is a \textbf{\emph{viscosity solution}} of equation (\ref{Eq:ConstExpLevel2}) if $u$ is both a viscosity subsolution and supersolution.
\end{defn}

In the following theorem, we apply the argument in the proof of existence of weak mean curvature flow in viscosity sense using elliptic regularization by L.C. Evans and J. Spruck \cite{ES} to show that any blowdown limit of regularized solutions is a viscosity solution.
\begin{thm}\label{thm:viscosity solution}
Let $u$ be a capillary blowdown limit of $f_s$. Then $u$ is a viscosity solution to the geometric equation (\ref{Eq:ConstExpLevel2}).
\end{thm}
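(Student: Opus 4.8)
The plan is to follow the elliptic-regularization scheme of Evans--Spruck, using the regularized equations (\ref{regeq2}) as the approximating family and passing to the viscosity limit. Recall that $u_{s_j}$ solves (\ref{regeq2}) and converges uniformly on $M$ to $u$. I would establish the subsolution property (the supersolution property is symmetric, obtained by replacing $u$ with $-u$ and $k$ with $-k$, or by the analogous argument with $\Om_-$). Fix $x\in M$ and a pair $(\zeta, A)\in \bar{J^{2,+}} u(x)$; by a standard approximation argument it suffices to treat $(\zeta, A)\in J^{2,+} u(x)$ realized by a strict local maximum of $u - \varphi$ at $x$ for some $\varphi\in C^2(M)$. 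Since $u_{s_j}\to u$ uniformly, there exist $x_j\to x$ with $u_{s_j} - \varphi$ attaining a local maximum at $x_j$; hence $\D u_{s_j}(x_j) = \D\varphi(x_j)$ and $\D^2 u_{s_j}(x_j) \leq \D^2\varphi(x_j)$. Plugging $x_j$ into (\ref{regeq2}), rewriting in the form analogous to (\ref{Eq:ConstExpLevel2}) (i.e. multiplying through by $\sqrt{s_j^2 + |\D u_{s_j}|^2}$), and estimating the trace term using $\D^2 u_{s_j}(x_j) \leq \D^2\varphi(x_j)$ together with the fact that the symbol of the operator is nonnegative, one obtains an inequality in terms of $\D\varphi(x_j) = \D u_{s_j}(x_j)$ and $\D^2\varphi(x_j)$.

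Now one takes $j\to\infty$. There are two cases depending on $\zeta = d\varphi(x)$. If $\zeta\neq 0$, then for large $j$ we have $\D u_{s_j}(x_j) = \D\varphi(x_j)$ bounded away from $0$, so $s_j^2 + |\D u_{s_j}(x_j)|^2 \to |\zeta|_x^2 > 0$ and every term in the rewritten equation converges; the limiting inequality, after using $\D^2 u_{s_j}(x_j)\leq \D^2\varphi(x_j) = A$ to control the trace term, is exactly $\cF(x, u(x), \zeta, A)\leq 0$. Here one must be slightly careful that $-\tr_g \D^2 u_{s_j}(x_j) + \lan \D^2 u_{s_j}(x_j)\tfrac{\zeta}{|\zeta|},\tfrac{\zeta}{|\zeta|}\ran$ is monotone under $\D^2 u_{s_j}(x_j) \leq A$, which holds because the matrix $g^{-1} - \tfrac{\zeta\otimes\zeta}{|\zeta|^2}$ is positive semidefinite (degenerate ellipticity). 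If $\zeta = 0$, then $|\D u_{s_j}(x_j)|\to 0$; here one sets $\xi_j := \D u_{s_j}(x_j)/\sqrt{s_j^2 + |\D u_{s_j}(x_j)|^2}$, which satisfies $|\xi_j|_{x_j}\leq 1$, extracts a convergent subsequence $\xi_j\to\xi$ with $|\xi|_x\leq 1$, multiplies (\ref{regeq2}) by $\sqrt{s_j^2+|\D u_{s_j}|^2}$, and observes that the zeroth-order and $k$-linear terms carry a factor $\sqrt{s_j^2+|\D u_{s_j}|^2}\to 0$; what survives in the limit is precisely $-\tr_g A + \lan A\xi,\xi\ran = \cG(x,\xi,A)\leq 0$.

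The main obstacle I anticipate is the bookkeeping in the degenerate case $\zeta = 0$: one must argue that the only terms that survive the limit are those in $\cG$, which requires verifying that the full second-order operator in (\ref{regeq2}), when rescaled, converges to $\cG(x,\xi,A)$ and not to something involving the (non-existent) direction $\D u(x)/|\D u(x)|$. The key point is that $\tfrac{u_{s_j}^i u_{s_j}^j}{s_j^2 + |\D u_{s_j}|^2}$ equals $\xi_j^i\xi_j^j$ up to the metric, so it converges along the chosen subsequence to $\xi^i\xi^j$, giving the degenerate operator with the correct (subunit) covector; and that $\tr_g \D^2 u_{s_j}(x_j)$ is bounded above by $\tr_g A$ while the remaining terms are $O(\sqrt{s_j^2 + |\D u_{s_j}|^2}) = o(1)$. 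A secondary technical point is the passage from $J^{2,+}$ to the closure $\bar{J^{2,+}}$, which follows by the usual diagonal/limiting argument once the statement is known for $J^{2,+}$, using that $\cF$ is continuous on $\{\zeta\neq 0\}$ and $\cG$ is continuous everywhere. Throughout, the uniform bounds (\ref{universal bound of expansion}) and the uniform convergence $u_{s_j}\to u$ on all of $M$ (not merely locally) ensure the local-maximum points $x_j$ exist and converge to $x$, which is what makes the touching argument go through.
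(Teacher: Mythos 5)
Your proposal is correct and follows essentially the same route as the paper: elliptic regularization via (\ref{regeq2}), touching $u_{s_j}$ with the test function at nearby points $x_j$, applying the derivative tests and degenerate ellipticity to pass from $\D^2 u_{s_j}(x_j)$ to $\D^2\varphi(x_j)$, and splitting into the cases $\zeta\neq 0$ and $\zeta=0$ with $\xi_j = \D u_{s_j}(x_j)/\sqrt{s_j^2 + |\D u_{s_j}(x_j)|^2}$ (the paper's $\eta_j$). The paper handles the non-strict maximum by perturbing with $d(\cdot,x_0)^4$ and implicitly treats the $J^{2,+}\to\bar{J^{2,+}}$ passage as standard, exactly as you indicate; otherwise the arguments coincide.
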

\begin{proof}
Let $\varphi\in C^2(M)$ and suppose $u-\varphi$ has a \emph{strict} local maximum at a point $x_0\in M$. Choose $u_{s_j} \to u$ uniformly near $x_0$, then $u_{s_j}-\varphi$ has a local maximum at a point $x_j$ with $x_j\to x_0$ as $j\to \infty$. Since $u_{s_j}$ and $\varphi$ are twice differentiable, we have 
\begin{align*}
    \D u_{s_j}(x_j) &= \D\varphi(x_j),\\
    \D^2 u_{s_j}(x_j) &\leq \D^2\varphi(x_j).
\end{align*}
Thus, equation (\ref{regeq2}) implies for all $j$ at $x_j$
\begin{align}
    &-\tr_g \D^2 \varphi + \D^2 \varphi \big(\frac{\D\varphi}{\sqrt{s_j^2+ |\D\varphi|^2}}, \frac{\D\varphi}{\sqrt{s_j^2+ |\D\varphi|^2}}\big) \notag\\
    &+ \sqrt{s_j^2+ |\D\varphi|^2}\Big\{u_{s_j} + \tr_g k - k\big(\frac{\D\varphi}{\sqrt{s_j^2+ |\D\varphi|^2}},\frac{\D\varphi}{\sqrt{s_j^2+ |\D\varphi|^2}}\big)\Big\} \leq 0 \label{Eq2}
\end{align}

Suppose $\D\varphi(x_0) \neq 0$. Then $\D\varphi(x_j) \neq 0$ for all sufficiently large $j$. Passing to limit, we get
\begin{align*}
    \cF\big(x_0, u(x_0), \D\varphi(x_0), \D^2 \varphi(x_0)\big)\leq 0.
\end{align*}
Suppose $\D\varphi(x_0) = 0$. Set $\eta_j := \frac{\D\varphi(x_j)}{\sqrt{s_j^2 + |\D\varphi(x_j)|^2}}\in T_{x_j}M$ such that (\ref{Eq2}) becomes
\begin{align*}
    &-\tr_g \D^2 \varphi(x_j) + \D^2 \varphi \big(\eta_j, \eta_j\big)(x_j) \\
    &+ \sqrt{s_j^2+ |\D\varphi(x_j)|^2}\big\{u_{s_j}(x_j) + \tr_g k(x_j) - k\big(\eta_j,\eta_j\big)(x_j)\big\} \leq 0
\end{align*}
Since $|\eta|_{x_j} \leq 1$, we may assume up to subsequence $\eta_j \to \eta\in T_{x_0}M$ with $|\eta|_{x_0} \leq 1$. Letting $j\to \infty$, since $u$ and $k$ are bounded we obtain
\begin{align*}
    \cG(x_0, \eta, \D^2 \varphi(x_0)) \leq 0.
\end{align*}

If $u-\varphi$ has a local maximum which may not be strict, we repeat the argument above with  
\begin{align*}
    \Tilde{\varphi}(x) = \varphi + d(x,x_0)^4
\end{align*}
satisfying $\D\tilde{\varphi}(x_0) = \D\varphi(x_0)$ and $\D^2\tilde{\varphi}(x_0) = \D^2 \varphi(x_0)$ in place of $\varphi$. Here, $d$ is the distance function defined on $(M, g)$. Therefore, $u$ is a viscosity subsolution.

It follows analogously that $u$ is a viscosity supersolution.
\end{proof}

\subsection{A priori estimates of foliation of stable constant expansion surfaces}
In this subsection, we will prove the a priori estimate of foliation of stable constant expansion surfaces. The proof will follow the stability argument leading to the a priori estimates of regularized Jang equation in \cite{SY2} and the one of stable minimal hypersurfaces in \cite{SSY}. Here, we only comment on the key ingredients adapted to the assumptions that we concern. 

Recall that $(M,g,k)$ is an asymptotically flat initial data set satisfying dominant energy condition. Given positive constants $\cT$ and $B$, suppose $\Si$ assigned with unit normal $\nu$ is a closed smooth stable CES with constant expansion $\th_0\in [-\cT, \cT]$ having the second fundamental forms $\|h_{\Si}\|^2\leq B$. Suppose $\Psi: (a, b)\times \Si \rightarrow M$ is a smooth foliation of closed stable CES initiated from $\Si$ with expansion in the range $[-\cT, \cT]$. Let $\Si_\tau$ denote $\Psi(\tau, \Si)$ and let $\nu_\tau$ denote $\Psi_*(\pa_\tau)/|\Psi_*(\pa_\tau)|$ where $\Psi_*$ is the pushforward of $\Psi$, then $\Psi$ satisfies the following properties:
\begin{enumerate}
	\item[(1)] $\Psi(\tau_0, \cdot) = \mathrm{Id}_\Si(\cdot)$ on $\Si$ for some $\tau_0\in (a,b)$ and $\nu_{\tau_0} = \nu$.
    \item[(2)] The expansion $\th_{\Si_\tau}$ of $\Si_\tau$ with respect to unit normal $\nu_\tau$ is a constant in $[-\cT, \cT]$ for any $\tau\in (a,b)$.
    \item[(3)] $\la_1(\cL_{\Si_\tau}) \geq 0$ for any $\tau\in (a , b)$. 
\end{enumerate}
Now fix arbitrary $\tau \in (a, b)$. Let $e_1, e_2, e_3$ be a local orthonormal frame for $\Si_\tau$ with $e_1,e_2$ tangent to $\Si_\tau$ and $e_3$ normal to $\Si_\tau$. Let $\om_1, \om_2, \om_3$ be the corresponding dual orthonormal coframe of one-form. The structure equations of $M$ is given by 
\begin{align*}
    d\om_i = -\sum_{j=1}^3 \om_{ij}\wedge \om_j ,\qquad \om_{ij}+\om_{ji} = 0\\
    d\om_{ij} = -\sum_{k = 1}^{3} \om_{ik}\wedge \om_{kj} + \frac{1}{2}\sum_{k,l = 1}^3 R_{ijkl}\,\om_k\wedge \om_l.
\end{align*}
Let $\D$ and $\bar{\D}$ denote the Levi-Civita connections on $M$ and $\Si_\tau$ respectively. In this subsection, the indices range 1,2 and constant $C$ may change from time to time but depend only on the initial data set $(M,g,k)$, given constants $\cT$ and $B$. The first important ingredient of a priori estimate is Simon's inequality. By virtue of asymptotically-flatness of $(M,g,k)$, the background Riemannian curvature tensor and its covariant derivatives are bounded. This is a key assumption to derive the lower bound of Laplacian of second fundamental form $h_{ij}$ of $\Si_\tau$ as in \cite{SY2} on page 236
\begin{align*}
    \De h_{ij} \geq \bar{\D}_i\bar{\D}_j \rH - (\sum_{m,k} h_{mk}^2) + \rH \sum_{m} h_{im}h_{mj} - C (|h| + 1)\de_{ij}.
\end{align*}
Follwing the same computation in \cite{SY2} page 236-237, one can obtain the Simon's inequality (cf. \cite{SY2} (2.16))
\begin{align}\label{simon inequality}
    |h| \De |h| &\geq  c(2) \sum_{i,j,k} (\bar{\D}_k h_{ij})^2 - |h|^4 - |\rH||h|^3\notag\\
    & +\sum_{i,j} h_{ij}\bar{\D}_i\bar{\D}_j \rH - C|\bar{\D}\rH|^2 - C(|h|^2 + 1)
\end{align}
where $c(2)$ is a constant depends only on $\mathrm{dim}(\Si) = 2$.

The second important ingredient is the stability inequality. In \cite{SY2}, they derive the stability inequality by observing that vertical translations generate a Jacobi field. Now in our setting we assume the stability directly. Let $\beta > 0$ be a smooth eigenfunction of $\cL_{\Si_\tau}$ corresponding to non-negative principle eigenvalue $\la_1$ and let $\th_\tau$ be the constant expansion of $\Si_\tau$. Using (\ref{op2}), we have
\begin{align}\label{stability inequality 1}
\begin{split}
    0\leq \la_1 = \frac{\cL_{\Si_\tau} \beta}{\beta} &= -\Div_{\Si_\tau}(\xi + \bar{\D} \log \beta) - |\xi + \bar{\D}\log \beta|_{\Si_\tau}^2\\
    &+ \frac{1}{2}\rR_\Si - \frac{1}{2}|h - k|_{\Si_\tau}^2 - \mu + J(\nu) - \frac{1}{2}\th_\tau(\th_\tau + 2\tr_g k).
\end{split}
\end{align}
Note that dominant energy condition implies $-\mu + J(\nu)\leq 0$. Let $\varphi\in C^\infty(\Si)$. Multiplying (\ref{stability inequality 1}) by $\varphi^2$, integrating by part and applying Young's inequality to the first term, we find
\begin{align}\label{stability inequality 2}
    0 &\leq  \int_{\Si_\tau} |\bar{\D}\varphi|^2 + \frac{1}{2}\int_{\Si_\tau} \Big\{\rR_{\Si_\tau} - |h-k|_{\Si_\tau}^2 - \th_\tau(\th_\tau + 2\tr_g k)\Big\}\varphi^2.
\end{align}
Using Guass equation and cancelling out $\rH^2$ terms in $\rR_\Si$ and $\th_\tau^2$, we get
\begin{align}\label{h^2 bound}
    \int_{\Si_\tau} |h|^2\varphi^2 \leq \int_{\Si_\tau} |\bar{\D}\varphi|^2 + C \int_{\Si_\tau} (|h| + 1)\varphi^2.
\end{align}
Combining (\ref{simon inequality}), (\ref{h^2 bound}) together with the control $|\bar{\D}\rH_\Si|^2 = |\bar{\D} \rK_\Si|^2\leq C(|h|^2 +1)$ on constant expansion surface, following the argument in \cite{SY1} replacing $\varphi$ by $|h|\varphi^2$ and then absorbing $|h|^3\varphi^4$ by $|h|^4\varphi^4$ and $\varphi^2$, we may derive 
\begin{align}\label{h^4 bound}
    \int_{\Si_\tau} |h|^4\varphi^4 \leq \int_{\Si_\tau} |\bar{\D}\varphi|^4 + C \int_{\Si_\tau} \varphi^4.
\end{align}

The third ingredient is the local area bound for $\Si_\tau$. We will follow the calibration argument in \cite{SY1} on page 243 with minor modification. Observe that in the region sweep by the foliation $\Psi$ we have
\begin{equation}\label{calibration}
	\Div_M(\nu_\tau) = \th_{\Si_\tau} + \tr_g(k) - k(\nu_\tau, \nu_\tau)
\end{equation}
where $|\th_{\Si_\tau}|\leq \cT$. Let $x_0\in \Si_\tau$, $B_\si(x_0)$ be the geodesic ball in $(M,g)$ centered at $x_0$ and let $W$ be the region enclosed by $\Si$ and $\Si_\tau$. Let $0<\rho_0\leq 1$ such that $\rho_0\leq \mathrm{inj}(M, g)$. Integrating identity (\ref{calibration}) over the region $W\cap B_\si(x_0)$ for $0<\si\leq \rho_0$ and applying divergence theorem, we obtain
\begin{equation*}
	\area(\Si_\tau\cap B_\si(x_0)) \leq \area(\Si \cap B_\si(x_0)) + \area(\pa B_\si(x_0)\cap W) + C\cT \si^3.
\end{equation*}
Since we have $\|h_\Si\|^2\leq B$ for the initial sheet, there exists a constant $\rho_1$ depending on $M,\Si, g,k,B,\cT$ such that for $0<\si\leq \rho_1$
\begin{equation}\label{area bound}
	\area(\Si_\tau\cap B_\si(x_0)) \leq C \si^2.
\end{equation}
With the area bound (\ref{area bound}) the results of Hoffman and Spruck \cite{HS} imply that there is a number $\rho_2\leq \rho_1$ such that the Michael-Simon type Sobolev inequality holds: 
\begin{equation}
	\Big(\int_{\Si_\tau}\varphi^2\Big)^{1/2} \leq C \int_{\Si_\tau} |\bar{\D}\varphi| + |\varphi||\rH|.
\end{equation}
for any Lipschitz $\varphi$ vanishing outside of $\Si_\tau \cap B_{\rho_2}(x_0)$. Using the bounds for expansion, $k$ and area (\ref{area bound}) together with H\"{o}lder inequality, we obtain
\begin{equation*}
	\Big(\int_{\Si_\tau}\varphi^2\Big)^{1/2} \leq C \int_{\Si_\tau} |\bar{\D}\varphi|
\end{equation*}
and hence for arbitrary $p>2$
\begin{equation}\label{Sobolev inequality}
	\Big(\int_{\Si_\tau} |\varphi|^p\Big)^{1/p} \leq C \int_{\Si_\tau} |\bar{\D}\varphi|^2.
\end{equation}
Fixing the geodesic distance cutoff function to $x_0$ depending on $\rho_2$, (\ref{h^4 bound}) and (\ref{area bound}) imply
\begin{equation}\label{L2 bound for h}
	\|h_{\Si_\tau}\|^2\in L^2\big(B_{\rho_2/2}(x_0)\big).
\end{equation}
Let $u = \|h_{\Si_\tau}\|^2 + 1$. Following the argument in \cite{SY1} $u$ is a positive weak subsolution to some elliptic equation. De Georgi-Nash-Moser iteration technique together with the $L^2$-bound for $u$ (\ref{area bound}) and (\ref{L2 bound for h} now gives pointwise curvature bound for extrinsic curvature
\begin{equation}\label{pointwise bound for h}
	\sup_{\Si_\tau} |h_{\Si_\tau}|^2 \leq C.
\end{equation}
Note that the Sobolev inequality (\ref{Sobolev inequality}) for large $p>2$ is sufficient for iteration technique for dimension 2. Also, (\ref{L2 bound for h}) where $2> \frac12 \dim(\Si_\tau) = 1$ guarantees the structural conditions are satisfied. 

Lastly, following the argument in \cite{SY1}, (\ref{pointwise bound for h}) implies the uniform local $C^{3,\al}$ estimate. We conclude the results of this subsection in the following proposition.
\begin{prop}\label{prop: Local est for foliation of CES} Let $(M,g,k)$ be an asymptotically flat initial data set satisfying dominant energy condition. Given positive constants $\cT$ and $B$, suppose $\Si$ assigned with unit normal $\nu$ is a closed smooth stable CES with constant expansion $\th_0\in [-\cT, \cT]$ having the second fundamental forms $\|h_{\Si}\|^2\leq B$. Suppose $\Psi: (a, b)\times \Si \rightarrow M$ is a smooth foliation of closed stable CES initiated from $\Si$ with expansion in the range $[-\cT, \cT]$. Given $\al\in (0,1)$, then there exist constants $\rho$ and $C_\al$ depending on $M,\Si, g, k, \cT, B$ such that for any $\tau \in (a,b)$, for every $x_0\in \Si_\tau$ if $(x^1,x^2,x^3)$ normal coordinates in $M$ on which $T_{x_0} \Si_\tau$ is the $x^1x^2$-space, then the local defining function $w(x)$ for $\Si_\tau$ is defined on $\{x=(x^1,x^2): |x|\leq \rho\}$ with 
\begin{align*}
    \Si_\tau\cap B^3 (x_0;\frac{\rho}{2}) \subseteq \gr(w)
\end{align*} and satisfies 
\begin{align*}
    \norm{w}_{3,\al,\{x:|x|\leq \rho\}} \leq C_\al.
\end{align*}
\end{prop}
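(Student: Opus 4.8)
The plan is to assemble the a priori estimates developed in the discussion above into a single uniform local $C^{3,\al}$ bound, taking care that every constant that appears depends only on $(M,g,k)$, $\Si$, $\cT$ and $B$, and in particular not on the leaf parameter $\tau$.

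First I would fix $\tau\in(a,b)$ and a point $x_0\in\Si_\tau$, and run the stability argument of \cite{SY1}, \cite{SSY} on the two-dimensional leaf $\Si_\tau$. Combining the Simons-type inequality (\ref{simon inequality}) with the stability-derived curvature integral bound (\ref{h^4 bound}) and the Michael--Simon--Hoffman--Spruck Sobolev inequality (\ref{Sobolev inequality}), the function $u:=\|h_{\Si_\tau}\|^2+1$ becomes a nonnegative weak subsolution of a uniformly elliptic equation on $\Si_\tau\cap B_{\rho_2}(x_0)$ with structural constants controlled by the data, while (\ref{L2 bound for h}) supplies $u\in L^2(B_{\rho_2/2}(x_0))$ with controlled norm. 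A De Giorgi--Nash--Moser iteration then yields the pointwise curvature bound (\ref{pointwise bound for h}). With this bound in hand I would pass to the graph representation near $x_0$: the curvature bound provides a uniform radius $\rho$ on which $\Si_\tau$ is a graph $w$ over $T_{x_0}\Si_\tau$ in normal coordinates, with $|w|,|\D w|$ small and $|\D^2 w|$ bounded, i.e.\ a uniform $C^{1,1}$ bound on $w$; since the constant-expansion condition $\rH_{\Si_\tau}-\rK_{\Si_\tau}=\th_\tau$, $|\th_\tau|\le\cT$, reads as a quasilinear uniformly elliptic equation for $w$ with $C^1$ coefficients, elliptic $L^p$ estimates followed by Schauder estimates promote $w$ to $C^{3,\al}$ with the asserted bound. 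Equivalently one may follow the bootstrap in \cite{SY1}, which produces the local $C^{3,\al}$ estimate directly from (\ref{pointwise bound for h}).

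The hard part will be keeping all constants uniform in $\tau$. Each preliminary estimate --- the local area comparison (\ref{area bound}), the resulting Sobolev inequality (\ref{Sobolev inequality}), and the coefficients entering the iteration --- must hold with constants independent of the leaf, and this is precisely where the hypotheses are used: the expansion of every leaf lies in the fixed interval $[-\cT,\cT]$, so the calibration identity (\ref{calibration}) is bounded uniformly; the bound $\|h_\Si\|^2\le B$ on the \emph{initial} leaf seeds the area comparison at the scale $\rho_1$; and the stability $\la_1(\cL_{\Si_\tau})\ge0$, assumed along the whole foliation, is what makes (\ref{stability inequality 2}) and hence (\ref{h^4 bound}) hold with uniform constants (together with the dominant energy condition, which gives $-\mu+J(\nu)\le0$). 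A secondary subtlety is that the De Giorgi--Nash--Moser iteration must be set up on a region whose area and geometry are only controlled after the area bound is established, so one fixes the Hoffman--Spruck threshold radius $\rho_2\le\rho_1$ first and only then iterates inside $B_{\rho_2/2}(x_0)$; one should also check, as in the remark after (\ref{pointwise bound for h}), that the Sobolev exponent $p>2$ and the $L^2$ integrability of $\|h\|^2$ (with $2>\tfrac12\dim\Si_\tau$) meet the structural requirements of the iteration in dimension two.
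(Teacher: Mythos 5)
Your proposal follows essentially the same route as the paper's running-text argument in Section~4.2: Simons inequality plus the stability-derived $L^4$-bound on $|h|$, the calibration-based local area bound seeded by $\|h_\Si\|^2\le B$, the Hoffman--Spruck Sobolev inequality, a De Giorgi--Nash--Moser iteration on $u=\|h_{\Si_\tau}\|^2+1$ to get the pointwise curvature bound, and then the bootstrap to $C^{3,\al}$ as in \cite{SY1}. You also correctly identify, and make more explicit than the paper does, exactly which hypotheses (expansion in $[-\cT,\cT]$, the initial curvature bound $B$, stability along the foliation, and the dominant energy condition) are responsible for the $\tau$-uniformity of each constant.
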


\subsection{Existence of smooth solutions}
\begin{prop}\label{prop: local stable foliation}
Suppose $(\Si, \nu)$ is a closed smooth embedded strictly stable CES in $(M,g,k)$ with $\th_\Si \equiv  \tau_0$ in $(M,g,k)$. Then there exists a constant $\vare>0$ and a smooth CES foliation $\Psi: (\tau_0 - \vare, \tau_0 + \vare)\times \Si \rightarrow M$ satisfying the following properties. Let $\Si_\tau$ denote the sheet $\Psi(\tau, \Si)$. We have
\begin{enumerate}
    \item[(1)] $\Psi(\tau_0, \cdot) = \mathrm{Id}_\Si(\cdot)$ on $\Si$.
    \item[(2)] $\th_{\Si_\tau} \equiv \tau$ for all $\tau\in (\tau_0 - \vare, \tau_0 + \vare)$.
    \item[(3)] (Local uniqueness) If $\tilde{\Si}$ is a closed smooth CES in $\Psi\big((\tau_0 - \vare, \tau_0 + \vare)\times \Si\big)$ and can be expressed as a graph of $w\in C^\infty(\Si)$ in Fermi coordinates around $\Si$, then $\tilde{\Si} = \Si_{\tilde{\tau}}$ for some $\tilde{\tau}\in (\tau_0 - \vare, \tau_0 + \vare)$.
\end{enumerate}
\end{prop}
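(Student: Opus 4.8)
The plan is to produce the foliation via the implicit function theorem, with strict stability guaranteeing that the linearized expansion operator is an isomorphism. First I would set up the relevant map between Banach spaces. Work in the Fermi coordinates $\Up$ around $\Si$. For $w\in C^{2,\al}(\Si)$ with $\norm{w}_{C^0}$ small, the normal graph $\fgr(w)=\{\Up(y,w(y)):y\in\Si\}$ is a smooth embedded surface; orient it by the continuation of $\nu$ and let $\Theta(w)\in C^{0,\al}(\Si)$ be its expansion $\th_{\fgr(w)}$ pulled back to $\Si$ along $y\mapsto\Up(y,w(y))$. Then $\Theta$ is a smooth map of Banach spaces, $\Theta(0)=\tau_0$ (constant), and since the deformation $\vare\mapsto\Up(\cdot,\vare\varphi)$ has normal velocity $\varphi\nu$ at $\vare=0$, its effect on the expansion is given by the linearized operator of expansion, $D\Theta(0)=\cL_\Si$ (see (\ref{op1})). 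As $\Si$ is closed, $\cL_\Si\colon C^{2,\al}(\Si)\to C^{0,\al}(\Si)$ is Fredholm of index zero, and strict stability $\la_1(\cL_\Si)>0$ forces $\ker\cL_\Si=0$: a nonzero kernel element would be an eigenfunction with eigenvalue $0<\la_1$, contradicting minimality of the real part of $\la_1$. Hence $\cL_\Si$ is invertible, and the implicit function theorem applied to $G(\tau,w):=\Theta(w)-\tau$ at $(\tau_0,0)$ yields $\vare>0$ and a smooth curve $\tau\mapsto w_\tau\in C^{2,\al}(\Si)$, $\tau\in(\tau_0-\vare,\tau_0+\vare)$, with $w_{\tau_0}=0$ and $\Theta(w_\tau)\equiv\tau$; differentiating, $\cL_\Si\dot w_{\tau_0}=1$. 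Elliptic regularity promotes each $w_\tau$ to $C^\infty(\Si)$ and the implicit function theorem gives smooth dependence on $\tau$, so $\Psi(\tau,y):=\Up(y,w_\tau(y))$ is smooth and satisfies (1) and (2) by construction.

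Next I would check that $\Psi$ is an embedding onto a neighborhood of $\Si$, so that the leaves $\Si_\tau:=\Psi(\tau,\Si)$ genuinely foliate it. The key is the sign of $\dot w_{\tau_0}=\cL_\Si^{-1}(1)$: since $\la_1(\cL_\Si)>0$, the operator $\cL_\Si$ satisfies a maximum principle (if $\cL_\Si v=1>0$ and $v$ attained a nonpositive minimum, subtracting a small multiple of the positive principal eigenfunction and using the strong maximum principle gives a contradiction; cf.\ \cite{AMS}), whence $\dot w_{\tau_0}>0$ on $\Si$. After shrinking $\vare$ we keep $\dot w_\tau>0$, so $\tau\mapsto w_\tau(y)$ is strictly increasing for each $y$, the graphs $\Si_\tau$ are pairwise disjoint, and $\pa_\tau\Psi=\dot w_\tau\,\pa_\si$ is nowhere zero and transverse to $T\Si_\tau$; together with the injectivity of $y\mapsto\Up(y,w(y))$ this makes $\Psi$ an embedding, with image the open tubular neighborhood $\bigcup_\tau\Si_\tau$.

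Finally I would prove the local uniqueness (3). Let $\tilde\Si$ be a closed smooth CES contained in $\Psi\big((\tau_0-\vare,\tau_0+\vare)\times\Si\big)$ and writable as $\fgr(\tilde w)$ with $\tilde w\in C^\infty(\Si)$; since $\tilde\Si$ is a CES, $\Theta(\tilde w)\equiv\tilde\tau$ for a constant $\tilde\tau$, i.e.\ $G(\tilde\tau,\tilde w)=0$. The implicit function theorem also asserts that $G^{-1}(0)$, intersected with a $C^{2,\al}$-ball of some radius $r_0$ about $0$, is exactly $\{(\tau,w_\tau)\}$; so it suffices to force $\norm{\tilde w}_{C^{2,\al}}<r_0$. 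From $\tilde\Si\subset\bigcup_\tau\Si_\tau$ and $\norm{w_\tau}_{C^0}=O(\vare)$ we already have $\norm{\tilde w}_{C^0}=O(\vare)$; the missing step is to upgrade this to $C^{2,\al}$-smallness, which follows from a priori estimates for the quasilinear expansion equation $\Theta(\tilde w)=\tilde\tau$ (for instance Proposition \ref{prop: Local est for foliation of CES}, once one notes that CES's that are $C^0$-close to $\Si$ are stable), bounding $\norm{\tilde w}_{C^{3,\al}}$ uniformly and then interpolating against $\norm{\tilde w}_{C^0}=O(\vare)$. Hence $\tilde w=w_{\tilde\tau}$, so $\tilde\Si=\Si_{\tilde\tau}$ with $\tilde\tau\in(\tau_0-\vare,\tau_0+\vare)$. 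Alternatively one can avoid the regularity bookkeeping with a geometric maximum principle: let $\tilde\Si$ first touch $\Si_{\tau^\ast}$, where $\tau^\ast=\max\{\tau:\Si_\tau\cap\tilde\Si\neq\emptyset\}$; the two surfaces are tangent at a contact point with $\tilde\Si$ on the $-\nu$ side, so the touching-point comparison of the quasilinear expansion operator gives $\tilde\tau\ge\tau^\ast$, and symmetrically $\tilde\tau\le\min\{\tau:\Si_\tau\cap\tilde\Si\neq\emptyset\}$, forcing equality.

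The main obstacle is this last point — going from $C^0$-smallness of a CES sitting in the thin foliated region to the $C^{2,\al}$-smallness needed for the uniqueness half of the implicit function theorem; the geometric maximum principle variant trades this for a careful contact-point analysis of the prescribed-expansion operator. The remaining ingredients (invertibility of $\cL_\Si$, smooth dependence of $w_\tau$ on $\tau$, and the sign $\dot w_{\tau_0}>0$) are routine.
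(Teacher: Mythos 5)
Your proposal follows the same strategy as the paper: set up the prescribed-expansion map in Fermi coordinates, observe that its linearization at $w=0$ is $\cL_\Si$, use strict stability to get invertibility and apply the implicit function theorem, and then show the leaves foliate by proving the normal velocity is positive via the principal-eigenfunction maximum-principle argument. Two points where you are more careful than the paper's write-up: (i) you work between Hölder spaces $C^{2,\al}\to C^{0,\al}$ and then bootstrap, whereas the paper applies IFT directly between $C^\infty(\Si)$ spaces (a Fréchet setting where the standard IFT does not literally apply — your version is the correct way to make that step rigorous); and (ii) you identify that local uniqueness from the contraction principle requires $C^{2,\al}$-smallness of $\tilde w$, while the hypothesis only gives $C^0$-smallness, a gap the paper's one-line remark ``follows from the contraction principle'' does not address. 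One small caveat on your first repair: invoking Proposition~\ref{prop: Local est for foliation of CES} introduces a circularity (it presupposes stability of $\tilde\Si$, which you do not yet know); it is cleaner to cite Proposition~\ref{Prop: LocalEst}, which gives uniform $C^{3,\al}$ bounds for any solution of $\rH-\rK=F$ with bounded $F$ (applied to the cylinder $\tilde\Si\times\R$) and needs no stability hypothesis, after which interpolation against the $C^0$-smallness gives the $C^{2,\al'}$-smallness you want. Your alternative touching-point argument for uniqueness is also a valid route and avoids this bookkeeping entirely.
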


\begin{proof}
We begin with proving local existence of smooth foliation by using implicit function theorem. Let $\Up: \Si\times (-\de, \de)\to M: (y, \si) \mapsto \exp_y(\si\nu_y)$ be the Fermi coordinates around $\Si$ with respect to the unit normal $\nu$. For a function $w\in C^\infty(\Si)$, denote the graph $\{\exp_y\big(w(y)\nu_y\big): y\in \Si\}$ of $w$ in Fermi coordinates by $\fgr(w)$. We also let $\th(w)$ simply denote the expansion of $\fgr(w)$ in the unit normal $\pa_\si ^\perp / |\pa_\si^\perp|$ where $\pa_\si^\perp$ is the projection of $\pa_\si$ onto the normal space of $\fgr(w)$. Observe that the operator 
\begin{align*}
    \cT: C^\infty(\Si) \times \R\rightarrow C^\infty(\Si)
\end{align*}
defined by 
\begin{align*}
    \cT(w, \tau) = \th(w) - \tau
\end{align*}
is a Frechet smooth mapping and $\cT(0, \tau_0) = 0$. The linearization of $\cT$ with respect to the first argument at $(0,\tau_0)$ is given by 
\begin{align*}
    (D_1\cT)|_{(0,\tau_0)}(w') = \cL_{\Si} w'
\end{align*}
for $w'\in C^\infty(\Si)$. Since $\la_1(\cL_\Si)>0$, the linearization operator $D_1\cT(0,\tau_0)$ is an isomophism from $C^{\infty}(\Si)$ onto $C^{\infty}(\Si)$. By implicit function theorem, there exists $\vare>0$ and a unique Frechet smooth mapping 
\begin{equation}
    \cS: (\tau_0 -\vare, \tau_0 + \vare) \longrightarrow C^\infty(\Si)
\end{equation}
such that 
\begin{equation*}
    \cS(\tau_0) = 0
\end{equation*}
and for $\tau\in (\tau_0 -\vare, \tau_0 + \vare)$
\begin{equation}
    \cT(\cS(\tau),\tau) = 0.
\end{equation}
Define the smooth one-parameter family of embeddings 
\begin{equation*}
    \Psi:(\tau_0 -\vare, \tau_0 + \vare)\times \Si\longrightarrow M
\end{equation*}
by 
\begin{equation*}
    \Psi(\tau, y) = \exp_y\big(\cS(\tau)(y)\nu_y\big)
\end{equation*}
for $\tau\in (\tau_0 -\vare, \tau_0 + \vare)$, $y\in \Si_0$. Denote the sheet $\Psi(\tau, \Si)$ by $\Si_\tau$. It follows that $\{\Si_\tau : \tau \in (\tau_0 -\vare, \tau_0 + \vare)\}$ is a smooth one-parameter family of closed smooth embedded surfaces with constant expansion $\tau$. Thus, (1) and (2) has been established. The local uniqueness property (3) follows from the contraction principle in the proof of implicit function theorem.

It remains to show that $\Phi$ is a foliation. Observe that $\Psi(\tau, \cdot)$ satisfies the evolution equation
\begin{align}\label{eq: flow}
    \frac{d}{d\tau}\Psi = \psi_{\tau}\nu_{\Si_\tau}
\end{align}
where $\psi_\tau\in C^\infty(\Si_\tau)$ satisfies 
\begin{equation}\label{L phi=1}
    \cL_{\Si_\tau} \psi_\tau = 1.
\end{equation}
To see that $\Psi$ is a foliation, it suffices to show the velocity function $\psi_{\tau}>0$ for all $\tau \in (T_-, T_+)$. Toward contradiction, suppose $\psi_{\tau}(x) \leq 0 $ for some $\tau \in (T_-, T_+)$ and $x\in \Si_\tau$. Let $\beta_\tau>0$ denote the (unique up to scaling) eigenfunction of $\cL_{\Si_\tau}$ associated with the principal eigenvalue $\la_1(\cL_{\Si_\tau})$. There exists $b_\tau\ge 0$ such that $\min_{\Si_\tau} (\psi_{\tau} + b_\tau\beta_\tau) = 0$. At minimum point, by (\ref{L phi=1}) we obtain
\begin{align*}
    0\geq -\De_{\Si_\tau} (\psi_\tau +  b_\tau\beta_\tau) = \cL_{\Si_\tau} (\psi_\tau + b_\tau\beta_\tau) = 1+ b_\tau\la_1(\cL_{\Si_\tau}) \beta_\tau \ge 1.
\end{align*}
This is a contradiction. 
\end{proof}

\begin{cor}[Maximal smooth stable foliation]\label{cor: maximal smooth stable foliation}
Suppose $(\Si, \nu)$ is a closed smooth embedded strictly stable CES in $(M,g,k)$ with $\th_\Si \equiv  \tau_0$ in $(M,g,k)$. Then there exists an open interval $(T_-, T_+)$ containing $\tau_0$ and a smooth CES foliation $\Psi: (T_-, T_+)\times \Si \rightarrow M$ satisfying the following properties:
\begin{enumerate}
    \item[(1)] $\Psi(\tau_0, \cdot) = \mathrm{Id}_\Si(\cdot)$ on $\Si$.
    \item[(2)] $\th_{\Si_\tau} \equiv \tau$ for all $\tau\in (T_-, T_+)$.
    \item[(3)] $\la_1(\cL_{\Si_\tau}) > 0$ for $\tau\in (T_- , T_+)$. 
\end{enumerate}
Furthermore, if $|T_+| < \infty$ (resp. $|T_-| < \infty$), then $\Si_\tau$ converges to a smooth marginally stable CES $\Si_{T_+}$ (resp. $\Si_{T_-}$) as $\tau\to T_+$ (resp. $\tau\to T_-$).
\end{cor}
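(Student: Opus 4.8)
The plan is to construct the maximal foliation by patching together the local foliations supplied by Proposition \ref{prop: local stable foliation}, and then to analyse a finite endpoint by combining the a priori estimates of Proposition \ref{prop: Local est for foliation of CES} with the stability hypothesis. For the first part, I would begin with the local foliation of Proposition \ref{prop: local stable foliation} --- observing that $\la_1(\cL_{\Si_\tau})>0$ holds for $\tau$ near $\tau_0$ by continuity of the principal eigenvalue in the smoothly $\tau$-dependent coefficients of $\cL_{\Si_\tau}$ --- and consider the collection of all smooth CES foliations $\Psi_I\colon I\times\Si\to M$ over open intervals $I\ni\tau_0$ satisfying $\Psi_I(\tau_0,\cdot)=\mathrm{Id}_\Si$, $\th_{\Si_\tau}\equiv\tau$, and $\la_1(\cL_{\Si_\tau})>0$ throughout $I$. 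The key point is that any two such foliations have the same sheets on the overlap: the set of $\tau$ at which the two sheets coincide is nonempty (it contains $\tau_0$), closed (by smoothness), and open --- at such a $\tau$ the common sheet is a closed strictly stable CES, so Proposition \ref{prop: local stable foliation} together with its local uniqueness clause (3) forces both foliations to agree with the local model around that sheet, hence with each other, for nearby parameters. Taking the union of all these foliations then produces a well-defined maximal smooth CES foliation $\Psi$ over an open interval $(T_-,T_+)\ni\tau_0$ with properties (1)--(3).

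Now suppose $T_+<\infty$ (the case $|T_-|<\infty$ is symmetric). Restricting $\Psi$ to $[\tau_0,T_+)$, every sheet $\Si_\tau$ is a closed stable CES with expansion $\tau\in[-\cT,\cT]$, $\cT:=\max(|\tau_0|,|T_+|)$, and the foliation is initiated from $\Si_{\tau_0}$, whose second fundamental form satisfies $\|h\|^2\le B$ for some $B$; Proposition \ref{prop: Local est for foliation of CES} then gives uniform local $C^{3,\al}$ estimates for all $\Si_\tau$. I would next argue that the sheets stay inside a fixed compact set $\rK\subset M$: otherwise, for $\tau$ near $T_+$ a sheet $\Si_\tau$ would lie far out in an asymptotically flat end, where asymptotic flatness (small $R_g$, $k$, ambient curvature) together with almost-constant mean curvature and the bound on $|h|$ force $\Si_\tau$ to be $C^2$-close to a large coordinate sphere; but for such a near-sphere the zeroth order coefficient of $\cL_{\Si_\tau}$ is $\approx -|h|^2_{\Si_\tau}$, which is close to a negative constant, so $\la_1(\cL_{\Si_\tau})<0$ --- contradicting $\la_1(\cL_{\Si_\tau})>0$. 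Since the foliation is monotone (the normal speed $\psi_\tau$ satisfying $\cL_{\Si_\tau}\psi_\tau=1$ is positive, cf.\ the proof of Proposition \ref{prop: local stable foliation}) and its nested sheets all lie in $\rK$, the total normal displacement is at most $\mathrm{diam}(\rK)$, so together with the uniform $C^{3,\al}$ estimates and the Arzela-Ascoli theorem the sheets $\Si_\tau$ converge in $C^{3,\al'}$ as $\tau\to T_+$ to a closed smooth embedded surface $\Si_{T_+}$ --- the limit is non-degenerate because $|h|\le C$ prevents collapse, and $\th_{\Si_{T_+}}=T_+$ by continuity, while monotonicity makes the limit independent of the approximating sequence.

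It remains to check that $\Si_{T_+}$ is marginally stable. By continuity of the principal eigenvalue under $C^2$-convergence of the surface, $\la_1(\cL_{\Si_{T_+}})=\lim_{\tau\to T_+}\la_1(\cL_{\Si_\tau})\ge 0$. If this limit were positive, then Proposition \ref{prop: local stable foliation} applied to $(\Si_{T_+},\nu_{T_+})$ would produce a smooth CES foliation on $(T_+-\vare',T_+ +\vare')$ which, by local uniqueness, coincides with $\Psi$ on $(T_+-\vare',T_+)$ and hence extends $\Psi$ beyond $T_+$, contradicting the maximality of $(T_-,T_+)$. Therefore $\la_1(\cL_{\Si_{T_+}})=0$, so $\Si_{T_+}$ is a marginally stable CES.

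I expect the second paragraph to be the main obstacle. Proposition \ref{prop: Local est for foliation of CES} controls only the intrinsic and extrinsic geometry of each individual sheet, not its position in $M$, so one must separately rule out the sheets running off into the asymptotically flat ends --- this is precisely where the strict stability $\la_1>0$ enters --- and then use the monotone, nested structure of the foliation to upgrade subsequential compactness to genuine $C^{3,\al'}$ convergence onto a non-degenerate limit surface; pinning down the ``near-sphere'' statement rigorously (asymptotic flatness plus almost-constant mean curvature plus bounded $|h|$ implies $C^2$-proximity to a coordinate sphere) is the one technical point that needs care.
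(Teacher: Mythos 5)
Your argument follows essentially the same route as the paper's proof: patch together the local foliations supplied by Proposition \ref{prop: local stable foliation} using its local uniqueness clause and continuous dependence of the principal eigenvalue to obtain a maximal interval $(T_-,T_+)$, invoke the a priori estimates of Proposition \ref{prop: Local est for foliation of CES} together with Arzel\`a--Ascoli to extract a smooth limit sheet when $T_+$ is finite, and deduce marginal stability by contradiction with maximality. The paper states this much more tersely, but the skeleton is identical.

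The one place you add substance is in flagging that Proposition \ref{prop: Local est for foliation of CES} controls only the intrinsic and extrinsic geometry of each sheet, not its position in $M$, so that escape of the sheets to spatial infinity must be ruled out separately; the paper's proof is silent on this. Your heuristic resolution is in the right spirit, but the mechanism is not quite ``zeroth-order coefficient $\approx -|h|^2$'': for a large near-round coordinate sphere the term $\cP - \Div_\Si\xi - |\xi|^2$ is $O(r^{-3})$ and the negativity comes from $-\tfrac12\Th(2\tr_g k + \Th)\approx -\tfrac12\Th^2$ in \eqref{op1}, and one must track rates carefully near the borderline $T_+\to 0$. In the paper's applications the foliations live inside a fixed compact blowup region $\bar{\Omega}$, so the concern does not arise there; as a self-contained statement, though, you are right that a rigorous version of the ``near-sphere'' step is the missing technical ingredient, and you correctly identified it as such. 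One further small point worth making explicit: the monotone nested ordering of the sheets, combined with the strong maximum principle for the constant-expansion equation, is what guarantees that the limit surface is embedded rather than merely immersed.
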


\begin{proof}
It is known that the principal eigenvalue depends (Lipschitz) continuously on the coefficients of the elliptic operator (cf. \cite{BR}). By the local existence Proposition \ref{prop: local stable foliation} and local estimate Proposition \ref{prop: Local est for foliation of CES}, $\Psi$ can be extended uniquely to an open neighborhood of the slice $\Si_\tau$ as long as $\Si_\tau$ has finite constant expansion and $\la_1(\cL_{\Si_\tau})>0$. Thus, there is a maximal interval $(T_-, T_+)$ such that $\Psi$ remains smooth and satisfies $\th_{\Si_\tau} \equiv \tau$ and $\la_1(\cL_{\Si_\tau}) > 0$ for all $\tau \in (T_-, T_+)$. In particular, if $|T_+| < \infty$ (resp. $|T_-| < \infty$), then $\Si_\tau$ converges smoothly to a CES $\Si_{T_+}$ (resp. $\Si_{T_-}$) as $\tau\to T_+$ (resp. $\tau\to T_-$). In either case, $\Si_{T_+}$ or $\Si_{T_-}$ is marginally stable; otherwise, the foliation $\Psi$ continues by the local construction, which contradicts to the maximality of the interval $(T_-, T_+)$.
\end{proof}

\begin{prop}[Local smooth solution]\label{prop:smooth solution}
Suppose $(\Si, \nu)$ is a closed smooth strictly stable CES with $\th \equiv  \tau_0$ in $(M,g,k)$. Let $\Psi$ be the maximal stable foliation constructed in Corollary \ref{cor: maximal smooth stable foliation}. Define 
\begin{equation}
    v\big(\Psi(\tau, y)\big) = \tau
\end{equation}for all $\tau\in (T_-, T_+)$, $y\in \Si$. Then $v$ is a smooth solution to equation (\ref{Eq:ConstExpLevel}) in the region $\Psi\big((T_-, T_+)\times \Si\big)$ such that $\D v$ is nowhere vanishing. Moreover, for all $\tau \in (T_-, T_+)$ there exists $0 < C(\tau)< \infty$ depending continuously on local geometry of $\Si_\tau$ and $k$ such that 
\begin{equation}\label{Harnack inequality 2}
    C(\tau)^{-1}\la_1(\mathcal{L}_{\Si_\tau}) \leq |\D v|_{\Si_\tau} \leq C(\tau) \la_1(\mathcal{L}_{\Si_\tau}).
\end{equation}
In particular, if $|T_+| < \infty$ (respectively $|T_-| < \infty$), then $\D v(x)$ converges to zero uniformly as $x$ on approach to $\Si_{T_+}$ (respectively $\Si_{T_-}$). 
\end{prop}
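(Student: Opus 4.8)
The plan is to read the equation and the gradient bound directly off the evolution equations for the foliation $\Psi$ produced by Proposition~\ref{prop: local stable foliation} and Corollary~\ref{cor: maximal smooth stable foliation}. Since $\Psi$ is a smooth foliation of $\Psi\big((T_-,T_+)\times\Si\big)$ by the leaves $\Si_\tau=\Psi(\tau,\Si)$, the function $v$ is well defined and smooth, with $v^{-1}(\tau)=\Si_\tau$. Recall from the construction that $\frac{d}{d\tau}\Psi=\psi_\tau\,\nu_{\Si_\tau}$ with $\psi_\tau\in C^\infty(\Si_\tau)$, $\psi_\tau>0$, and $\cL_{\Si_\tau}\psi_\tau=1$. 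Differentiating $v(\Psi(\tau,y))=\tau$ in $\tau$ and using that $\D v$ is normal to $\Si_\tau$ gives $\lan\D v,\psi_\tau\nu_{\Si_\tau}\ran=1$ on $\Si_\tau$, hence
\[
    \D v=\frac{1}{\psi_\tau}\,\nu_{\Si_\tau}\quad\text{on }\Si_\tau .
\]
In particular $\D v$ never vanishes (as $\psi_\tau$ is a positive smooth function on the compact leaf $\Si_\tau$) and $\D v/|\D v|=\nu_{\Si_\tau}$ with $|\D v|_{\Si_\tau}=\psi_\tau^{-1}$. Since $\D v/|\D v|$ is then a unit normal field near any point $x\in\Si_\tau$, the standard level-set identity $\Div_M(\D v/|\D v|)=\rH_{\Si_\tau}$ (mean curvature with respect to $\nu_{\Si_\tau}$) together with $\tr_g k-k(\nu_{\Si_\tau},\nu_{\Si_\tau})=\tr_{\Si_\tau}k=\rK_{\Si_\tau}$ turns $\th_{\Si_\tau}=\rH_{\Si_\tau}-\rK_{\Si_\tau}=\tau=v(x)$ into exactly equation (\ref{Eq:ConstExpLevel}).

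The core of the argument is the two-sided bound (\ref{Harnack inequality 2}), which via $|\D v|_{\Si_\tau}=\psi_\tau^{-1}$ is equivalent to $C(\tau)^{-1}\la_1^{-1}\le\psi_\tau\le C(\tau)\la_1^{-1}$, where $\la_1:=\la_1(\cL_{\Si_\tau})>0$. Let $\beta_\tau>0$ be the principal eigenfunction, $\cL_{\Si_\tau}\beta_\tau=\la_1\beta_\tau$, normalized by $\min_{\Si_\tau}\beta_\tau=1$, and set $C(\tau):=\max_{\Si_\tau}\beta_\tau\ge1$. Because $\la_1>0$, the ground-state substitution $w\mapsto\beta_\tau^{-1}w$ conjugates $\cL_{\Si_\tau}$ to an operator with positive zeroth-order coefficient $\la_1$, so $\cL_{\Si_\tau}$ obeys the maximum principle on the closed leaf $\Si_\tau$: $\cL_{\Si_\tau}w\ge0\Rightarrow w\ge0$. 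Applying this to $w=\la_1^{-1}\beta_\tau-\psi_\tau$ (for which $\cL_{\Si_\tau}w=\beta_\tau-1\ge0$) and to $w=\psi_\tau-\la_1^{-1}C(\tau)^{-1}\beta_\tau$ (for which $\cL_{\Si_\tau}w=1-C(\tau)^{-1}\beta_\tau\ge0$) yields $\la_1^{-1}C(\tau)^{-1}\beta_\tau\le\psi_\tau\le\la_1^{-1}\beta_\tau$, and then $1\le\beta_\tau\le C(\tau)$ gives the claimed bound. Simplicity of $\la_1$ pins down $\beta_\tau$ uniquely under this normalization, and standard perturbation theory for elliptic eigenvalue problems together with the smooth dependence of $\Si_\tau$ on $\tau$ shows that $\beta_\tau$ — hence $C(\tau)=\max_{\Si_\tau}\beta_\tau$ — depends continuously on the local geometry of $\Si_\tau$ and on $k$.

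For the last assertion, suppose $|T_+|<\infty$; by Corollary~\ref{cor: maximal smooth stable foliation}, $\Si_\tau$ converges smoothly to a marginally stable CES $\Si_{T_+}$ as $\tau\to T_+$, so $\la_1(\cL_{\Si_{T_+}})=0$. Continuity of the principal eigenvalue under the smoothly converging coefficients of $\cL_{\Si_\tau}$ gives $\la_1(\cL_{\Si_\tau})\to0$, while $C(\tau)$ stays bounded near $T_+$ because $\Si_\tau\to\Si_{T_+}$ smoothly and $\beta_{T_+}$ is a positive smooth function on the limit leaf. Hence $\sup_{\Si_\tau}|\D v|\le C(\tau)\la_1(\cL_{\Si_\tau})\to0$, i.e.\ $\D v(x)\to0$ uniformly as $x\to\Si_{T_+}$; the case $|T_-|<\infty$ is identical.

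The main obstacle is the estimate in the middle paragraph: controlling the foliation velocity $\psi_\tau$ — equivalently $|\D v|$ — from both sides by $\la_1(\cL_{\Si_\tau})$. This hinges on the maximum principle for the (non-self-adjoint) operator $\cL_{\Si_\tau}$, which is available exactly because of the \emph{strict} stability hypothesis $\la_1>0$, and on choosing the normalization of the principal eigenfunction carefully enough that the resulting constant $C(\tau)$ has the asserted continuous dependence and remains bounded up to a finite endpoint. The remaining steps — smoothness of $v$, verification of (\ref{Eq:ConstExpLevel}), and the degeneration at $T_\pm$ — are routine once these estimates are in place.
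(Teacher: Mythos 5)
Your proof is correct and follows the paper's strategy: derive $|\D v|=1/\psi_\tau$ from differentiating $v\circ\Psi=\tau$, then compare the velocity $\psi_\tau$ (which solves $\cL_{\Si_\tau}\psi_\tau=1$) against the principal eigenfunction $\beta_\tau$ to obtain the two-sided bound by $\la_1(\cL_{\Si_\tau})^{-1}$, with $C(\tau)$ the Harnack constant $\max\beta_\tau/\min\beta_\tau$. The only real difference is in packaging the comparison step: the paper picks $b_\tau>0$ so that $\psi_\tau-b_\tau\beta_\tau$ touches $0$ from below at a point $x_\tau$ and runs a pointwise derivative test on $\cL_{\Si_\tau}$ there, whereas you normalize $\beta_\tau$ by $\min\beta_\tau=1$ and invoke the maximum principle for $\cL_{\Si_\tau}$ (justified via ground-state conjugation, which is legitimate precisely because $\la_1>0$) applied to $\la_1^{-1}\beta_\tau-\psi_\tau$ and $\psi_\tau-\la_1^{-1}C(\tau)^{-1}\beta_\tau$. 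These are equivalent arguments — the proof of your maximum principle is itself the touching argument the paper writes out — so this is a reformulation rather than a genuinely different route. Incidentally, the paper's intermediate displays (\ref{cla1 upper}) and (\ref{cla1 lower}) contain sign/reciprocal typos (they read $\psi_\tau\leq C(\tau)\la_1$ and $C(\tau)^{-1}\la_1\leq\psi_\tau$ instead of $\psi_\tau\leq C(\tau)\la_1^{-1}$ and $C(\tau)^{-1}\la_1^{-1}\leq\psi_\tau$); your version has the exponents right and is consistent with (\ref{Harnack inequality 2}).
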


\begin{proof}
By definition, $v$ is a smooth function since $\Psi$ is a smooth foliation. Let $\tau \in (T_-, T_+)$. In view of (\ref{eq: flow}) and (\ref{L phi=1}) we have
\begin{equation*}
    1 =  \frac{d}{d \tau}v = \lan \D v, \psi_\tau \nu_{\Si_{\tau}} \ran = |\D v|\cdot\psi_\tau \quad \text{on $\Si_\tau$.}
\end{equation*}
From the proof of Proposition \ref{prop: local stable foliation}, we find $0 < \psi_\tau < \infty$. Thus, 
\begin{equation}\label{velocity Dv}
    0 < |\D v| = 1/\psi_\tau < \infty \quad \text{on $\Si_\tau$.}
\end{equation}  
It follows that the level set $v^{-1}(\tau) = \Si_\tau$ is regular and has constant expansion $\tau$. Therefore, $v$ is a classical solution to (\ref{Eq:ConstExpLevel2}) in $\Psi\big((T_-, T_+)\times \Si\big)$.

Let $\beta_\tau>0$ denote the (unique up to scaling) eigenfunction of $\cL_{\Si_\tau}$ associated with the principal eigenvalue $\la_1(\cL_{\Si_\tau})$. Remark that the following argument is independent of the choice of scaling of $\beta_\tau$. By Harnack inequality, there exists $C(\tau)$ such that 
\begin{equation}\label{Harnack inequality}
    \max_{\Si_\tau} \beta_\tau \leq C(\tau) \min_{\Si_\tau} \beta_\tau
\end{equation}
for all $T_- < \tau < T_+$. Here $C(\tau)$ depends on the coefficients of $\cL_{\Si_\tau}$ and intrinsic diameter of $\Si_\tau$ and therefore depends on local geometry of $\Si_\tau$ and $k$. Since both $\psi_\tau$ and $\beta_\tau$ are positive and $\Si_\tau$ is compact, there exists a constant $b_\tau> 0$ and a point $x_\tau\in \Si_\tau$ such that 
\begin{equation*}
    \max_{\Si_\tau} (\psi_\tau - b_\tau\beta_\tau) = \psi_\tau(x_\tau) - b_\tau\beta_\tau(x_\tau) = 0.
\end{equation*} 
It follows from (\ref{L phi=1}) that 
\begin{align*}
    0\leq \cL_{\Si_\tau}(\psi_\tau - b_\tau\beta_\tau)(x_\tau) = 1-b_\tau\la_1(\cL_{\Si_\tau})\beta_\tau(x_\tau).
\end{align*}
Thus, 
\begin{equation*}
    b_\tau\beta_\tau(x_\tau)\leq \la_1(\cL_{\Si_\tau}).
\end{equation*} 
Then the maximum of $\psi_\tau - b_\tau \beta_\tau$ at $x_\tau$ and the Harnack inequality (\ref{Harnack inequality}) imply that for any $x\in \Si_\tau$
\begin{equation}\label{cla1 upper}
    \psi_\tau(x) \leq b_\tau\beta(x) \leq b_\tau C(\tau)\beta(x_\tau) \leq C(\tau)\la_1(\cL_{\Si_\tau}).
\end{equation}
By considering $\min_{\Si_\tau} (\psi_\tau - a_\tau \beta_\tau) =  0$ for suitable constant $a_\tau >0$, we can analogously show that
\begin{equation}\label{cla1 lower}
    C(\tau)^{-1}\la_1(\cL_{\Si_\tau})\leq \psi_\tau.
\end{equation}
Putting (\ref{velocity Dv}), (\ref{cla1 upper}) and (\ref{cla1 lower}) together, we conclude (\ref{Harnack inequality 2}).

If $|T_\pm| < \infty$, then by Corollary \ref{cor: maximal smooth stable foliation} $\Si_\tau$ converges smoothly to $\Si_{T_\pm}$ smoothly as $\tau \to T_\pm$ and therefore $C(\tau)$ can extend continuously to $\tau = T_\pm$. In view of (\ref{Harnack inequality 2}) and Corollary \ref{cor: maximal smooth stable foliation}: $\la_1(\cL_{\Si_{T_\pm}}) = 0$, we find that $|\D v|(x)$ converges to 0 uniformly as $x$ goes to $\Si_{T_\pm}$.
\end{proof}

When $(\Si, \nu)$ is a compact smooth marginally stable CES, we are not able the construct a local foliation of CES around $\Si$ with the operator in Proposition \ref{prop: local stable foliation}. Nevertheless, Galloway \cite{Ga} constructed a local foliation of CES around $\Si$ by considering the operator
\begin{equation*}
    \cT_0: C^\infty(\Si)\times \R\rightarrow C^\infty(\Si) \times \R, \quad \cT_0(w, \ell) =  \Big(\theta_{\fgr(w)} - \ell, \int_\Si w \Big).
\end{equation*}
The fact that the principal eigenvalue is simple allows him to apply the inverse function theorem with this operator. The drawbacks of the foliation are that the expansion function $\theta$ is implicit and that the sheets are not necessarily stable so that we can further extend the foliation.

\begin{prop}[cf. \cite{Ga} the proof of Theorem 3.1]\label{marginally stable foliation}
Suppose $(\Si, \nu)$ is a compact smooth marginally stable CES with $\th \equiv \tau_0$ and $\la_1(\cL_\Si) = 0$ in $(M,g,k)$. Then there exists $\vare > 0$ and a smooth CES foliation $\Psi_0: (-\vare, \vare)\to M$ satisfying the following properties. Denote $\Psi_0(\tau, \Si)$ by $\Si_\tau$. We have
\begin{enumerate}
    \item[(1)] $\Psi_0(0, \cdot) = \mathrm{Id}_\Si(\cdot)$ on $\Si$.
    \item[(2)] If $\tilde{\Si}$ is a compact smooth CES in $\Psi\big((-\vare, \vare )\times\Si\big)$ and $\tilde{\Si}$ can be expressed as a graph of $w\in C^\infty(\Si)$ in Fermi coordinates around $\Si$, then $\tilde{\Si} = \Si_{\tilde{\tau}}$ where
    \begin{equation}
        \tilde{\tau} = \int_\Si w.
    \end{equation}
\end{enumerate}
\end{prop}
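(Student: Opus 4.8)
The plan is to follow Galloway's construction \cite{Ga}: realize constant-expansion Fermi graphs over $\Si$ as the zero set of the augmented operator $\cT_0$ and apply the inverse function theorem in Banach spaces. The point is that, although $\cL_\Si$ is \emph{not} invertible here (it has a kernel, since $\la_1(\cL_\Si)=0$), the extra scalar slot $\ell$ together with the scalar constraint $\int_\Si w$ and the \emph{simplicity} of $\la_1$ restore an isomorphism of the linearization.

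Fix $\al\in(0,1)$ and use the Fermi chart $\Up$ around $\Si$ from Section 2.3; for $w\in C^{2,\al}(\Si)$ of small norm write $\fgr(w)$ for its Fermi graph and $\th(w)\in C^{0,\al}(\Si)$ for the expansion of $\fgr(w)$ in the normal compatible with $\nu$. Consider
\[
    \cT_0 : C^{2,\al}(\Si)\times\R \longrightarrow C^{0,\al}(\Si)\times\R, \qquad \cT_0(w,\ell) := \Big(\th(w)-\ell,\ \int_\Si w\Big).
\]
This is a smooth map of Banach spaces, its first component being a quasilinear elliptic expression in the $2$-jet of $w$, and $\cT_0(0,\tau_0)=(0,0)$ because $\th_\Si=\tau_0$. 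By (\ref{op1}) the differential at $(0,\tau_0)$ is $D\cT_0|_{(0,\tau_0)}(w',\ell')=\big(\cL_\Si w'-\ell',\ \int_\Si w'\big)$, and I claim it is an isomorphism. Since the principal eigenvalue $\la_1(\cL_\Si)=0$ is simple, $\cL_\Si:C^{2,\al}(\Si)\to C^{0,\al}(\Si)$ is Fredholm of index $0$ with $\ker\cL_\Si=\R\beta$, $\beta>0$; the formal adjoint $\cL_\Si^*$ likewise has principal eigenvalue $0$ with positive eigenfunction $\beta^*$, so $\mathrm{Ran}(\cL_\Si)=\{\phi:\int_\Si\phi\beta^*=0\}$. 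Given $(\phi,a)\in C^{0,\al}(\Si)\times\R$, solvability of $\cL_\Si w'=\phi+\ell'$ forces $\ell'=-(\int_\Si\phi\beta^*)/(\int_\Si\beta^*)$ (here $\int_\Si\beta^*>0$); then $w'$ is determined modulo $\R\beta$, and the free constant is fixed uniquely by $\int_\Si w'=a$ (here $\int_\Si\beta>0$). Hence $D\cT_0|_{(0,\tau_0)}$ is a continuous bijection, so an isomorphism by the open mapping theorem.

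By the inverse function theorem there are $\vare>0$, a neighborhood $\cW$ of $(0,\tau_0)$ on which $\cT_0$ is a diffeomorphism onto a neighborhood of $(0,0)$, and a smooth curve $\tau\mapsto(w(\tau),\ell(\tau))$, $|\tau|<\vare$, with $(w(0),\ell(0))=(0,\tau_0)$ and $\cT_0(w(\tau),\ell(\tau))=(0,\tau)$; thus each $\Si_\tau:=\fgr(w(\tau))$ has constant expansion $\ell(\tau)$ (an implicit, not necessarily monotone, function of $\tau$) and $\int_\Si w(\tau)=\tau$. Differentiating this relation at $\tau=0$ gives $\cL_\Si w'(0)=\ell'(0)$ and $\int_\Si w'(0)=1$; the solvability condition forces $\ell'(0)=0$, so $w'(0)=\beta/\int_\Si\beta>0$ on all of $\Si$. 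By continuity of $\tau\mapsto w'(\tau)\in C^{2,\al}(\Si)$ and compactness of $\Si$, after shrinking $\vare$ we have $w'(\tau)>0$ for all $|\tau|<\vare$; since in Fermi coordinates the leaves move along $\nu$ with speed $w'(\tau)$, the map $\Psi_0(\tau,y):=\exp_y\big(w(\tau)(y)\nu_y\big)$ is, for $\vare$ small, a diffeomorphism of $(-\vare,\vare)\times\Si$ onto a tubular neighborhood of $\Si$ whose leaves $\Si_\tau$ are pairwise disjoint CES — i.e.\ a smooth foliation — and $\Psi_0(0,\cdot)=\fgr(0)=\mathrm{Id}_\Si$, which is (1). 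Standard interior elliptic (Schauder) regularity upgrades each $w(\tau)$ to $C^\infty(\Si)$ and, bootstrapping in $\tau$, makes $\Psi_0$ a smooth map; in particular the $C^\infty$ restriction of $\cT_0$ recovers exactly the operator in the statement.

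For the local uniqueness (2): $\cT_0$ is injective on $\cW$, so if $\tilde\Si=\fgr(w)$ is a compact smooth CES with $(w,\th_{\tilde\Si})\in\cW$ — which, after shrinking $\vare$, is the case for any such $\tilde\Si$ lying in a thin enough foliated tube $\Psi_0\big((-\vare,\vare)\times\Si\big)$ — then from $\cT_0(w,\th_{\tilde\Si})=(0,\int_\Si w)=\cT_0\big(w(\tilde\tau),\ell(\tilde\tau)\big)$ with $\tilde\tau:=\int_\Si w$ we conclude $w=w(\tilde\tau)$, i.e.\ $\tilde\Si=\Si_{\tilde\tau}$ with $\tilde\tau=\int_\Si w$. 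The main obstacle is the isomorphism claim of the second paragraph: this is exactly the point where the marginally stable case departs from the strictly stable one of Proposition \ref{prop: local stable foliation} — $\cL_\Si$ by itself has a one-dimensional kernel, and it is only the augmentation by the constraint $\int_\Si w$, together with the simplicity of $\la_1=0$ and the positivity of both $\beta$ and the adjoint eigenfunction $\beta^*$, that makes $D\cT_0$ invertible; everything else is a routine application of the implicit function theorem and elliptic regularity.
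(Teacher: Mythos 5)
Your proof is correct and follows exactly the route the paper intends: the paper does not give its own proof here, instead pointing to Galloway's argument via the augmented operator $\cT_0(w,\ell)=(\theta_{\fgr(w)}-\ell,\int_\Si w)$, whose linearization you correctly identify as an isomorphism by combining the Fredholm alternative, the simplicity of $\la_1(\cL_\Si)=0$, and the positivity of the principal eigenfunctions $\beta,\beta^*$. Your additional care with Hölder spaces $C^{2,\al}\to C^{0,\al}$ (rather than the formal $C^\infty$ in the paper's displayed operator) for the inverse function theorem, the bootstrap to $C^\infty$, and the computation $w'(0)=\beta/\int_\Si\beta>0$ yielding the foliation property are all correct and consistent with what the cited source does.
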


\subsection{Comparison Theorem}

\begin{thm}\label{thm: comparison theorem}
Suppose $\Si_0\subset \pa \Om$ is a compact smooth strictly stable MOTS. Let $u$ be a capillary blowdown limit and let $v$ be the local smooth solution constructed on the annular region $\Psi\big([0, T_+], \Si_0\big)$ in Proposition \ref{prop:smooth solution}. Then $u\leq v$ in $\Psi\big([0, T_+], \Si_0\big)$.
\end{thm}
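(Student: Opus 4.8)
The plan is to run a comparison argument built on the fact that the geometric equation (\ref{Eq:ConstExpLevel2}), written in the form $\cF(x,r,\zeta,A)=0$, is \emph{proper}: it is degenerate elliptic (nonincreasing in $A$) and satisfies $\pa_r\cF(x,r,\zeta,A)=|\zeta|_x$, hence it is \emph{strictly} increasing in $r$ wherever $\zeta\neq 0$. By Theorem \ref{thm:viscosity solution}, $u$ is a viscosity subsolution; by Proposition \ref{prop:smooth solution}, $v$ is a classical solution --- hence a smooth viscosity supersolution that may be used directly as a test function --- whose gradient $\D v$ never vanishes on the open foliation region and which takes the value $\tau$ on each sheet $\Si_\tau=\Psi(\tau,\Si_0)$. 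The first step is to identify the boundary data. On $\Si_0$ (the sheet at parameter $0$) we have $v\equiv 0$, because $\Si_0$ is a MOTS and so has expansion $\tau_0=0$ with respect to the normal pointing into $\Om$; and $u\equiv 0$ on $\Si_0$, because $\Si_0\subset\bar{\Om}_+$ forces $u\ge 0$ while $\Si_0\subset\pa\Om_+\subset\bar{\Om}_0\cup\bar{\Om}_-$ forces $u\le 0$ (Proposition \ref{prop: SY Jang equation}). Thus $u=v=0$ on $\Si_0$, and we also recall $|u|\le\mu_1$ on $M$.

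The core of the argument is a maximum-principle dichotomy on compact subregions. For $\tau\in[0,T_+)$ put $D_\tau:=\Psi([0,\tau]\times\Si_0)$, a compact set with topological boundary $\Si_0\cup\Si_\tau$, and set $g(\tau):=\max_{D_\tau}(u-v)$; then $g$ is continuous, nondecreasing, and $g(0)=0$. The claim is that \emph{if $g(\tau)>0$ then the maximum of $u-v$ over $D_\tau$ is attained on the outer slice $\Si_\tau$}. Indeed it is not attained on $\Si_0$, where $u-v=0$; and if it were attained at an interior point $q$, then, since $\D v(q)\neq 0$, the smooth function $\varphi:=v+g(\tau)$ satisfies that $u-\varphi$ has a local maximum at $q$, so $(\D v(q),\D^2 v(q))\in J^{2,+}u(q)$ with nonzero first component. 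The viscosity subsolution property (in the non-singular regime $\zeta\neq 0$) gives $\cF(q,u(q),\D v(q),\D^2 v(q))\le 0$, while $\cF(q,v(q),\D v(q),\D^2 v(q))=0$ since $v$ solves the equation, and strict monotonicity of $\cF$ in $r$ then forces $u(q)\le v(q)$, contradicting $u(q)-v(q)=g(\tau)>0$. As $v\equiv\tau$ on $\Si_\tau$, we conclude $g(\tau)=\max_{\Si_\tau}u-\tau$ whenever $g(\tau)>0$.

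Now split on $T_+$. If $T_+=+\infty$: for $\tau>\mu_1$ we get $\max_{\Si_\tau}u-\tau\le\mu_1-\tau<0$, so $g$ is non-positive for large $\tau$; being nondecreasing with $g(0)=0$, it follows that $g\equiv 0$, i.e.\ $u\le v$ on $\Psi([0,\infty)\times\Si_0)$, which is the claim. If $T_+<\infty$ and $g\not\equiv 0$, set $\tau^*:=\inf\{\tau:g(\tau)>0\}$; by continuity $g\equiv 0$ on $[0,\tau^*]$, so $u\le v$ on $D_{\tau^*}$, while $\max_{\Si_\tau}u>\tau$ for all $\tau\in(\tau^*,T_+)$. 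Using the smooth convergence $\Si_\tau\to\Si_{\tau^*}$ and continuity of $u$ one obtains $\max_{\Si_{\tau^*}}u=\tau^*$, attained at a point $p^*\in\Si_{\tau^*}$ with $u(p^*)=v(p^*)$ and $\D v(p^*)\neq 0$; moreover $u-v\le 0$ on the $\{v\le\tau^*\}$-side near $p^*$ but takes positive values arbitrarily close to $\Si_{\tau^*}$ on the $\{v>\tau^*\}$-side. One then rules out this one-sided touching: a Hopf-type boundary point lemma for the viscosity subsolution $u$ against the smooth solution $v$ of the equation (which is non-singular near $p^*$ because $\D v\neq 0$) should force $u-v<0$ on the $\{v>\tau^*\}$-side near $p^*$, contradicting $g(\tau)>0$ for $\tau$ slightly above $\tau^*$; alternatively one replaces $v$ near $\Si_{\tau^*}$ by a strict supersolution lying above $v$ on a two-sided neighborhood, built from a positive first eigenfunction of $\cL_{\Si_{\tau^*}}$ and its strict stability, in the spirit of Proposition \ref{prop: stability of CES}, and contradicts the strong maximum principle. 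Once $g\equiv 0$, i.e.\ $u\le v$ on $\Psi([0,T_+)\times\Si_0)$, the inequality passes to the closure: for $p\in\Si_{T_+}$ choose $p_\tau\in\Si_\tau$ with $p_\tau\to p$ (smooth convergence, Corollary \ref{cor: maximal smooth stable foliation}), so $u(p_\tau)\le v(p_\tau)=\tau\to T_+=v(p)$.

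The genuine difficulty is the boundary-touching step at $p^*$ when $T_+<\infty$. Equation (\ref{Eq:ConstExpLevel2}) is of mean-curvature (level-set) type and fails to be uniformly elliptic even where $\D v\neq 0$ --- it degenerates in the gradient direction --- so the Hopf lemma and strong maximum principle must be applied in their geometric (constant-expansion-surface) form, or circumvented by the explicit strictly-stable barrier construction; carrying the viscosity rather than classical regularity of $u$ through this step is the delicate point. All the remaining ingredients --- the properness of $\cF$, the a priori and Harnack estimates behind $\D v\neq 0$ in Proposition \ref{prop:smooth solution}, the boundary values on $\Si_0$, and the limiting argument at $\Si_{T_+}$ --- follow readily from the results already established.
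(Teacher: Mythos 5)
Your interior-maximum argument is essentially the one the paper runs (the paper works directly with the regularized sequence $u_{s_j}$ and the perturbation $v+\rho^4$ at a global maximum of $u-v$ over $\bar{\cA}$, which is the same viscosity test as your $\varphi=v+g(\tau)$), so that part is fine. The genuine gap is exactly where you flag it: you never actually close the boundary/touching step, and the two routes you float are not obviously available. A Hopf boundary-point lemma for a merely Lipschitz viscosity subsolution of the level-set equation (\ref{Eq:ConstExpLevel2}) is not a standard tool --- the operator degenerates along the gradient direction even where $\D v\neq 0$ --- and the barrier you sketch ``in the spirit of Proposition \ref{prop: stability of CES}'' is built there for \emph{unstable} CES and produces sub/super-solutions of the constant-expansion surface equation, not of the level-set PDE for $u$; adapting it to a strictly stable sheet and to a supersolution for (\ref{Eq:ConstExpLevel2}) above $v$ on a two-sided neighborhood is nontrivial and left undone. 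There is also a bookkeeping issue in the $\tau^*$-argument: even if $u<v$ near one touching point $p^*$ on the $\{v>\tau^*\}$ side, the max points $q_\tau\in\Si_\tau$ need not approach $p^*$, and $u=v$ may occur on a whole closed subset of $\Si_{\tau^*}$; you would need a uniform one-sided separation along all of $\Si_{\tau^*}$, which again comes back to the missing barrier.

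The paper avoids all of this by using what is special about $u$ beyond being a viscosity subsolution. Taking $x_0$ to be the global maximum of $u-v$ on $\bar{\cA}$, the interior case is ruled out as you do, and on $\Si_0$ one has $u-v=0$, so the max must lie on $\Si_{T_+}$. The key step is then: since $x_0$ maximizes $u-v$ and $v<T_+$ on $\cA$, one gets $u<u(x_0)$ throughout $\cA$, so $x_0\in\pa E^-_{u(x_0)}(u)$ is not an interior point of the level set $u^{-1}(u(x_0))$. By Theorem \ref{thm: local convergence}/Corollary \ref{cor:CES in level set}, the translated regularized solutions must converge \emph{cylindrically} at $x_0$, producing a genuine closed smooth CES $\Si'\subset u^{-1}(u(x_0))\cap(\Om\setminus\cA)$ through $x_0$ with expansion $u(x_0)$ and with the compatible normal. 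Since $\Si'$ lies on the far side of $\Si_{T_+}$ and the two smooth surfaces touch at $x_0$ with the same normal, the classical strong maximum principle for mean curvature gives $\rH_{\Si'}(x_0)\le\rH_{\Si_{T_+}}(x_0)$, whence $u(x_0)=\th_{\Si'}(x_0)\le\th_{\Si_{T_+}}(x_0)=v(x_0)$, a contradiction. So the missing ingredient in your write-up is precisely this: at a non-interior point of a level set, the capillary blowdown limit carries a smooth CES, and one should compare surfaces geometrically rather than compare $u$ and $v$ as functions via a Hopf-type argument. (Your $T_+=\infty$ observation via $|u|\le\mu_1$ is a nice remark, though the theorem's phrasing $\Psi([0,T_+],\Si_0)$ already presupposes $T_+<\infty$.)
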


\begin{proof}
Let $\cA$ denote the annular region $\Psi\big((0, T_+), \Si_0\big)$. Suppose the statement is not true, that is, $u-v>0$ at some point in $\cA$. Since $u-v$ is continuous and $\bar{\cA}$ is compact, there exists $x_0\in \bar{\cA}$ such that $(u-v)(x_0) = \max_{\bar{\cA}} (u-v) > 0$. 

Suppose $x_0 \in \cA$ is an interior point. Let $\rho(x) = d(x, x_0)$. Note that $u-v - \rho^4$ has a strict maximum in $\cA$ at $x_0$. Since $u_{s_j}$ converges to $u$ uniformly on $\bar{\cA}$, there exists $x_j\in \bar{\cA}$ such that $u_{s_j}-v-\rho^4$ has a local maximum at $x_j$ and $x_j\to x_0$. For all sufficiently large $j$, we have $x_j\in \cA$. Since $u_{s_j}$ and $v$ are twice differentiable, the derivative test at $x_j$ shows that
\begin{align*}
    \D u_{s_j}(x_j) = \D v(x_j) + \D\rho^4(x_j), \quad \D^2 u_{s_j}(x_j) \leq \D^2 v(x_j) + \D^2\rho^4(x_j).
\end{align*}
In view of regularized equation (\ref{regeq2}), at $x_j$ we have
\begin{align*}
    0 &= - \D^2u_{s_j}\big(I - \frac{\D u_{s_j}}{\sqrt{s_j^2+ |\D u_{s_j}|^2}}\otimes \frac{\D u_{s_j}}{\sqrt{s_j^2+ |\D u_{s_j}|^2}}\big) \\
    &\quad + \sqrt{s^2 + |\D u_{s_j}|^2}\Big\{u_{s_j} + k\big(I - \frac{\D u_{s_j}}{\sqrt{s_j^2+ |\D u_{s_j}|^2}}\otimes \frac{\D u_{s_j}}{\sqrt{s_j^2+ |\D u_{s_j}|^2}}\big)\Big\}\\
    &\geq - \D^2 v_j\big(I - \frac{\D u_{s_j}}{\sqrt{s_j^2+ |\D u_{s_j}|^2}} \otimes \frac{\D u_{s_j}}{\sqrt{s_j^2+ |\D u_{s_j}|^2}}\big) \\
    &\quad - \D^2\rho^4\big(I - \frac{\D u_{s_j}}{\sqrt{s_j^2+ |\D u_{s_j}|^2}}\otimes \frac{\D u_{s_j}}{\sqrt{s_j^2+ |\D u_{s_j}|^2}}\big) \\
    &\quad + \sqrt{s^2 + |\D u_{s_j}|^2}\Big\{u_{s_j} + k\big(I - \frac{\D u_{s_j}}{\sqrt{s_j^2+ |\D u_{s_j}|^2}}\otimes \frac{\D u_{s_j}}{\sqrt{s_j^2+ |\D u_{s_j}|^2}}\big)\Big\}.\\
\end{align*}
We remark that 
\begin{align*}
    &\lim_{j\to \infty} \D\rho^4(x_j) = \D\rho^4(x_0) = 0,\\
    &\lim_{j\to \infty} \D^2\rho^4(x_j) = \D^2\rho^4(x_0) =  0,\\
    &\lim_{j\to \infty} u_{s_j}(x_j) = u(x_0).
\end{align*}
Moreover, by Proposition \ref{prop:smooth solution} we know $v(x_0)\neq 0$ for $x_0\in \cA$ and hence 
\begin{align*}
    \lim_{j\to \infty} \frac{\D u_{s_j}(x_j)}{\sqrt{s_j^2+ |\D u_{s_j}(x_j)|^2}} = \frac{\D v(x_0)}{|\D v(x_0)|}.
\end{align*}
Therefore, by letting $j\to \infty$ we obtain at $x_0$
\begin{align*}
    0&\geq -\D^2 v(x_0)\big(I - \frac{\D v(x_0)}{|\D v(x_0)|}\otimes \frac{\D v(x_0)}{|\D v(x_0)|}\big) \\
    &\quad + |\D v(x_0)|\{u(x_0) + k\big(I - \frac{\D v(x_0)}{|\D v(x_0)|}\otimes \frac{\D v(x_0)}{|\D v(x_0)|}\big)\}\\
    &= |\D v|\{u(x_0) - v(x_0)\}
\end{align*}
where the last equality follows from the fact that $v$ satisfies equation (\ref{Eq:ConstExpLevel2}). We then conclude that
\begin{align*}
    0< |\D v(x_0)|\{u(x_0) - v(x_0)\} \leq 0
\end{align*}
which is a contradiction.

Next suppose $x_0\in \pa \cA$. Note that $u = v = 0$ on $\pa \Om$, so $x_0 \in v^{-1}(T_+)$. We claim that $\cA\subset E_{u(x_0)}^-(u)$ and therefore $x_0\in \pa E_{u(x_0)}^-(u)$. To confirm this, we observe that if $x\in \cA$, then $v(x) < T_+ $ and by maximality of $u-v$
\begin{align*}
    u(x) = u(x) - v(x) + v(x) \leq u(x_0) - v(x_0) + v(x) < u(x_0).
\end{align*}
 By Theorem \ref{cor:CES in level set}, there exists a closed smooth properly embedded surface $\Si'$ in $u^{-1}(u(x_0))$ passing through $x_0$ having constant expansion $u(x_0)$ with respect to the unit normal vector pointing outside of $E_{u(x_0)}^-(u)$. In addition, the above clam implies that $\Si'\subset u^{-1}(u(x_0))\subset \Om\bsls \cA$. Therefore, $\Si'$ is enclosed by $\Si_{T_+}$ and two surfaces contact each other at $x_0$. Since the chosen unit normal vectors $\nu_{\Si_{T_+}}$ and $\nu_{\Si'}$ of these two surfaces agree at $x_0$ and $\Si'$ lies on the $+\nu_{\Si_{T_+}}$-side of $\Si_{T_+}$, the maximum principle implies that $\rH_{\Si}(x_0)\leq \rH_{\Si_{T_+}}(x_0)$. Consequently, we conclude that 
\begin{align*}
    u(x_0) = \th_{\Si'}(x_0) \leq \th_{\Si_{T_+}}(x_0) = v(x_0)
\end{align*}
which contradicts the assumption that $u(x_0) > v(x_0)$.
\end{proof}

\section{Structure of blowup region and capillary blowdown limit}
\subsection{Thin maximal domains}
For any open subset $S$ of $M$, we can define the \textbf{thickness} of $S$ by
\begin{equation*}
    \tau(S) = \sup \{\mathrm{diam} B: \mbox{$B$ is an open geodesic ball in $S$}\}.
\end{equation*}

\begin{prop}\label{prop: thin domain}
There exists a constant $R_0 = R_0(M,g,k)>0$ satisfying the following property. Let $\Th\in [-\mu_1, \mu_1]$ and let $f$ be a smooth solution to constant expansion equation $\rH(f) -\rK(f) = \Th$ on maximal domain $U$ in $(M,g,k)$. If $U$ is \textbf{thin} in the sense that $\tau(U)< R$, then $f$ has no critical point. Moreover, $U$ is homeomorphic to $f^{-1}(0)\times \R$ with exactly two boundary components $\pa _- U = "f^{-1}(-\infty)"$ and $\pa_+ U = "f^{-1}(\infty)"$ which are closed smooth CES with expansion $\Th$.
\end{prop}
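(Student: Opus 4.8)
The plan is to split the statement into two parts. The essential part is that thinness forces $f$ to have no critical point; the rest — the product structure of $U$ and the description of $\pa U$ — then follows from the gradient flow of $f$ together with the blow‑down/Harnack machinery already developed in Section~3.

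For the first part I would argue by contradiction. Suppose $x_0\in U$ with $\D f(x_0)=0$. Since both the equation $\rH(f)-\rK(f)=\Th$ and the domain $U$ are invariant under vertical translation, I may normalize $f(x_0)=0$, so that $X_0:=(x_0,0)\in\gr(f)$ and $T_{X_0}\gr(f)$ is exactly the horizontal slice $T_{x_0}M\times\{0\}$. Now apply Proposition~\ref{Prop: LocalEst} with $F\equiv\Th$; here one may take $\mu_1:=|\Th|\le\max_M|\tr_g k|$ and $\mu_2:=0$, so the radius $\rho$ and the $C^{3,\al}$ bound in part~(2) may be chosen depending only on $(M,g,k)$ and $\al$, uniformly in $\Th$ and in $U$. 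In the adapted normal coordinates $(y^1,\dots,y^4)$ at $X_0$ the slice $\{y^4=0\}$ is, near $X_0$, the totally geodesic product slice $M\times\{0\}$, the $y^4$–direction is vertical, and the local defining function $w$ of $\gr(f)$ satisfies $w(0)=0$, $\D w(0)=0$. Hence every point $(y,w(y))$ with $|y|\le\rho/2$ has $M$–coordinate $\exp^M_{x_0}(y)$ and lies on $\gr(f)$, so $\exp^M_{x_0}(y)\in U$ for all $|y|\le\rho/2$. Using the global curvature bounds of the asymptotically flat $(M,g)$ to control $\exp^M_{x_0}$, this produces a geodesic ball $B^M_{r_0}(x_0)\subset U$ with $r_0=r_0(M,g,k)>0$, hence $\tau(U)\ge R_0$ for a suitable $R_0=R_0(M,g,k)>0$, contradicting $\tau(U)<R_0$.

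For the second part, assume $\D f\neq0$ on $U$, so $f\colon U\to\R$ is a submersion. I would flow the vector field $X:=\D f/|\D f|^2_g$, whose flow $\phi_t$ satisfies $f(\phi_t(x))=f(x)+t$. Because $\bar U$ is compact (as $U$ is a maximal domain inside the bounded blow‑up region of Proposition~\ref{prop: SY Jang equation}) and $f$ diverges on approach to $\pa U$, a maximal integral curve of $X$ can neither leave $U$ nor reach a finite limit of its parameter, so $\phi$ is complete on $U$; the same two facts force $f$ to be unbounded above and below, hence $f(U)=\R$. Setting $L:=f^{-1}(0)$, the map $(y,t)\mapsto\phi_t(y)$ is a diffeomorphism $L\times\R\to U$ with inverse $x\mapsto(\phi_{-f(x)}(x),f(x))$. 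The level set $L$ is closed in $M$ (since $f$ diverges near $\pa U$) and contained in $\bar U$, hence a closed surface, and it is connected because $U$ is connected; thus $U\cong f^{-1}(0)\times\R$. To identify $\pa U$, note each $f^{-1}(a)=\phi_a(L)$ is a connected closed surface and $f-a$ solves the same constant expansion equation with the same bounds; choosing base points on $f^{-1}(a)$ and letting $a\to\pm\infty$, the local estimates, the Harnack inequality of Proposition~\ref{Prop: LocalEst}, and the cylinder argument from the proof of Theorem~\ref{thm:interior of level set} show that $\gr(f-a)$ subconverges to a cylinder over a closed connected constant expansion surface with expansion $\Th$; these two limits are $\pa_+U$ (where $f\to+\infty$) and $\pa_-U$ (where $f\to-\infty$), they are disjoint by the strong maximum principle, and they exhaust $\pa U$, so $\pa U$ has exactly two components, both closed smooth constant expansion surfaces with expansion $\Th$ in the compatible normal of Theorem~\ref{thm: local convergence}.

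The main obstacle is the first part: extracting a ball of \emph{definite} radius inside $U$ from the interior $C^{3,\al}$ estimate at a critical point. Concretely this needs (i) arguing that the local defining function $w$ genuinely represents the whole of $\gr(f)$ over a fixed horizontal ball — i.e.\ the inclusion in the direction opposite to the literal statement of Proposition~\ref{Prop: LocalEst}(2), which is precisely what the phrase ``local defining function'' is meant to supply — and (ii) checking that every constant entering $R_0$ is controlled by $(M,g,k)$ alone, uniformly over $\Th\in[-\mu_1,\mu_1]$ and over all maximal domains $U$. The remaining parts use only the gradient flow of $f$ and the Harnack/blow‑down arguments of Section~3.
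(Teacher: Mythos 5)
Your proposal is correct, and it reaches the same conclusion, but the key first step takes a genuinely different route from the paper's. The paper does not go through the $C^{3,\al}$ graphicality estimate of Proposition~\ref{Prop: LocalEst}(2). Instead it uses the Harnack gradient estimate of Proposition~\ref{Prop: LocalEst}(3): setting $\beta(x)=\lan\nu_f,\pa_t\ran=(1+|\D f|^2)^{-1/2}$, the bound $|\bar{\D}\log\beta|\le c_4$ on the graph gives, after comparing $\tilde g=g+df\otimes df$ with $g$, a pointwise bound $|d\beta|_g\le c_4$ on $U$. At a critical point $\beta(x_0)=1$, so integrating along any unit-speed geodesic from $x_0$ yields $\beta>0$, hence $|\D f|$ bounded, on $B_\ell(x_0)$ for every $\ell<c_4^{-1}$; maximality of $U$ then forces $\bar B_\ell(x_0)\subset U$, giving $\tau(U)\ge 2c_4^{-1}=:R_0$. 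This is slightly more economical than your argument: it only uses a first-order estimate and avoids the subtlety you flag in (i) about reading the ``local defining function'' clause as an inclusion of $\gr(w)$ into $\gr(f)$. Your route does work — at a critical point the tangent plane $T_{X_0}\gr(f)$ coincides with the horizontal slice, and since $(M\times\R,g+dt^2)$ is a metric product, the normal coordinates of Proposition~\ref{Prop: LocalEst}(2) project cleanly to normal coordinates for $M$ at $x_0$, so $\gr(w)\subset\gr(f)$ does yield $\exp^M_{x_0}(y)\in U$ for $|y|\le\rho$. The constants are indeed controlled uniformly by $(M,g,k)$ since $\mu_1=\max_M|\tr_g k|$ and $\mu_2=0$. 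As for the second statement, the paper dispatches it in one line (``Morse theory and Theorem~\ref{thm: local convergence}''), whereas you unpack exactly what that phrase abbreviates: the normalized gradient flow gives $U\cong f^{-1}(0)\times\R$, and the translation/Harnack argument of Section~3 identifies each end of $U$ with a closed CES of expansion $\Th$. Both are the intended content; yours is just spelled out.
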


\begin{proof}
Suppose $x_0\in U$ is a critical point of $f$. For $x\in U$, we define
\begin{equation*}
\beta(x):= \lan \nu_f, \pa t\ran|_{(x, f(x))} = (1+ |\D f(x)|^2)^{-1/2}.
\end{equation*}
Thus, $\beta(x_0) = 1$. By Harnack-type inequality in Proposition \ref{Prop: LocalEst} (3),
\begin{align*}
c_4\geq |d\log\beta|_{\tilde{g}}^2 = |d\log\beta|_g^2 - \frac{\lan df, d\log\beta\ran_g^2}{1+ |\D f|_g^2}\geq \beta^2|d\log\beta|_g^2 = |d\beta|_g^2,
\end{align*}
where $\tilde{g} = g + df\otimes df$ is the induced metric on the graph of $f$. Let $\ga: [0, \ell]\rightarrow U$ be a geodesic emitting from $\ga(0) = x_0$. Then for $s\in [0, \ell]$,
\begin{align*}
|\beta(\ga(s)) - \beta(\ga(0))| &= \big|\int_0^s \frac{d}{d\tau} \beta(\ga(\tau))\,d\tau\big| \\
&\leq \int_0^s \big|\lan \D\beta(\ga(\tau)), \ga'(\tau)\ran_g\big| \,d\tau\\
&\leq c_4 \int_0^s d\tau =  c_4s.
\end{align*}
If $\ell< c_4^{-1}$, then $|\beta(\ga(s))-1|< 1$ for any $s\in [0,\ell]$. It follows that $\beta(x)>0$ for any point $x$ in the closed geodesic ball $\bar{B}_\ell(x_0)$, and hence $\D f$ is bounded in the geodesic ball $\bar{B}_\ell(x_0)$. The domain $U$ is maximal, so $U$ contains the closed geodesic ball $\bar{B}_\ell(x_0)$ for any $\ell< c_4^{-1}$. Therefore, $\tau(U)\geq 2 c_4^{-1}$. Take $R_0 =  2 c_4^{-1}$, then we will get a contradiction.

The second statement follows immediately from Morse theory and Theorem \ref{thm: local convergence}.
\end{proof}

\begin{prop}\label{prop: unstable CES in thin maximal domain}
Let $f, U$ with $\tau(U)< R_0$ be assumed as in in Proposition \ref{prop: thin domain}. Let $\nu$ and $\nu'$ be unit normal vector field on $\pa_- U$ and $\pa_+ U$ respectively chosen as in Theorem \ref{thm: local convergence}. Suppose $\pa_- U$ and $\pa_+ U$ are stable (CES). There exists $R>0$ depending on the geometry of $\pa U$ in $(M,g)$ and $k$ such that if $\tau(U)\leq R$, then there exist closed smooth marginally stable CES's $\Si_1$ on the $+\nu$-side of $\pa_-U$ and $\Si_2$ on the $-\nu'$-side of $\pa_+ U$ such that $\Si_i\cap U \neq \emptyset$ for both $i = 1,2$.
\end{prop}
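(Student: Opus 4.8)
The plan is to realise $\Si_1$ as the terminal leaf of a maximal stable foliation by constant expansion surfaces (CES) issuing from $\pa_-U$ into $U$, and to obtain $\Si_2$ symmetrically from $\pa_+U$. By Proposition \ref{prop: thin domain} and $\tau(U)<R_0$ we already know $\bar U$ is a compact product annulus, $f$ has no critical point, $\pa_\pm U$ are closed smooth CES of expansion $\Th$ (with $\nu$ pointing into $U$ along $\pa_-U$ and $\nu'$ out of $U$ along $\pa_+U$), and $[\pa_+U]\neq 0$ in $H_2(\bar U)$ — this last fact is what the thinness hypothesis ultimately buys us. Assume first $\pa_-U$ is strictly stable. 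Then Proposition \ref{prop: local stable foliation} and Corollary \ref{cor: maximal smooth stable foliation} produce a maximal stable foliation $\Psi\colon(T_-,T_+)\times\pa_-U\to M$, $\Psi(\Th,\cdot)=\mathrm{Id}$, with leaves $\Si_\tau=\Psi(\tau,\pa_-U)$ of expansion $\tau$, $\nu_\Th=\nu$ and $\la_1(\cL_{\Si_\tau})>0$; since the normal speed $\psi_\tau$ is positive, increasing $\tau$ past $\Th$ pushes the leaves onto the $+\nu$-side of $\pa_-U$, so they start inside $U$, and $T_+>\Th$.

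The first routine point is that every $\Si_\tau$, $\tau\in[\Th,T_+)$, stays inside $U$. A first contact of some $\Si_{\tau_*}$ with $\pa U$ cannot occur on $\pa_-U=\Si_\Th$ because leaves of $\Psi$ are pairwise disjoint (equivalently, the function $v$ of Proposition \ref{prop:smooth solution} cannot take two values at one point); and it cannot occur on $\pa_+U$, because expressing the two tangent surfaces as Fermi graphs there and applying the strong maximum principle to the uniformly elliptic expansion operator forces the outer surface $\pa_+U$ (expansion $\Th$) to coincide with the inner surface $\Si_{\tau_*}$ (expansion $\tau_*>\Th$), which is absurd. The second routine point: once $T_+<\infty$, Corollary \ref{cor: maximal smooth stable foliation} gives smooth convergence $\Si_\tau\to\Si_1:=\Si_{T_+}$, a closed smooth marginally stable CES of expansion $T_+$, and the same two arguments give $\Si_1\cap\pa U=\emptyset$, so $\Si_1\subset U$ sits on the $+\nu$-side of $\pa_-U$ and meets $U$.

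The substance is the claim $T_+<\infty$, and this is where $\tau(U)\le R$ is needed. If $T_+=\infty$, then for large $\tau$ the leaf $\Si_\tau\subset U$ is a stable CES with $|\rH_{\Si_\tau}|\ge\tau-C$; rescaling the metric by $\tau^2$ turns $\Si_\tau$ into a CES of expansion close to $1$ in an ambient geometry whose curvature and $k$ tend to zero, and the a priori estimates of Proposition \ref{prop: Local est for foliation of CES} — applied along the foliation with the \emph{fixed} seed $\pa_-U$, so that the curvature bound scales like $\tau^2$ — keep the rescaled second fundamental form uniformly bounded. Hence, after rescaling and along a subsequence, $\Si_\tau$ is $C^2$-close to a closed embedded CMC surface in $\R^3$, i.e. to a round $2$-sphere; translating back, $\Si_\tau$ lies in a geodesic ball of radius $O(1/\tau)$ and is null-homologous in $\bar U$, contradicting $[\Si_\tau]=[\pa_+U]\neq0$. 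The construction of $\Si_2$ from $\pa_+U$ is the mirror image, using $\nu'$ and decreasing $\tau$ (leaves of expansion $<\Th$). If $\pa_-U$ or $\pa_+U$ is only marginally stable, I would first use Galloway's local foliation (Proposition \ref{marginally stable foliation}) to seed the construction with a CES just inside $U$, taking $R$ small (in terms of these local foliation neighbourhoods) so the seed and the whole foliation stay in $U$.

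I expect the step $T_+<\infty$ to be the main obstacle: it hinges on the correct $\tau$-scaling of the a priori estimates of Proposition \ref{prop: Local est for foliation of CES} and on excluding interior curvature concentration along the foliation, and it is the only place where the annular topology of $\bar U$, hence the thinness hypothesis, is genuinely used; handling a purely marginally stable boundary component needs a little extra care since Galloway's slices are not a priori stable.
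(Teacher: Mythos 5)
Your overall plan — issue a maximal stable foliation of CES from $\pa_-U$ into $U$, show the leaves stay inside $U$, and take the terminal marginally stable leaf as $\Si_1$ (mirror image for $\Si_2$) — is the same skeleton as the paper's proof, and your routine points (disjointness of leaves, maximum principle at a first contact with $\pa_+U$, Corollary \ref{cor: maximal smooth stable foliation} producing a marginally stable limit) are all fine. The divergence, and the gap, is precisely at the step you flag as the main obstacle.

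The paper does not argue that $T_+<\infty$ by a blowup/rescaling argument. Instead it works quantitatively in Fermi coordinates around $\pa_-U$: writing the leaves as graphs $w_\tau$ with $\cL_{\pa_-U}w_\tau = \tau - Q(w_\tau)$ and using Schauder estimates plus the scalar Harnack inequality, it shows (i) as long as the leaves remain strictly stable the foliation sweeps a \emph{definite} collar $[0,\rho_4]$ in Fermi coordinates whose size depends only on the geometry of $\pa U$ and $k$, and (ii) the range of $\tau$ swept is controlled by $\|w_\tau\|_0$, hence is a priori bounded. The constant $R$ is then chosen so that $\tau(U)\le R$ forces $\pa_+U$ to lie inside this collar; if every leaf were strictly stable, one would touch $\pa_+U$ and the strong maximum principle (with $\tau>\Th$) gives the contradiction. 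So the foliation must become marginally stable strictly before the collar is exhausted, and that terminal leaf is $\Si_1\subset\bar U$ meeting $U$. This is why $R$ depends on the boundary geometry: it is exactly the Fermi collar size from the Schauder/Harnack estimates, not a soft compactness constant.

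Your substitute argument for $T_+<\infty$ has two concrete problems. First, the constants $\rho,C_\al$ in Proposition \ref{prop: Local est for foliation of CES} depend on the expansion bound $\cT$ in an unspecified way, and the paper never establishes the $\tau^2$-scaling of the curvature bound you need after the dilation by $\tau$; you are importing a scaling invariance that has not been proven here (the area bound in the proof of Proposition \ref{prop: Local est for foliation of CES}, for instance, is obtained on balls of radius $\rho_2$ depending on $\cT$ and the initial sheet, so it does not automatically survive rescaling). Second, and more tellingly, your argument never actually invokes $\tau(U)\le R$ with $R$ depending on $\pa U$; it only uses $\tau(U)<R_0$ and the resulting annular topology to get $[\pa_+U]\neq 0$. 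If your argument were complete it would therefore prove a strictly stronger statement (with $R=R_0$), which should be a warning sign: the dependence of $R$ on the boundary geometry in the proposition is not decorative, it is the output of the quantitative estimate, and an argument that does not use it has bypassed the intended mechanism. I would suggest abandoning the rescaling route and instead carrying out the Schauder/Harnack bookkeeping in Fermi coordinates around $\pa_-U$, which also gives you the bound on $\tau$ for free and avoids the classification of stable CMC surfaces in $\R^3$ entirely.

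One small additional remark: the paper's proof, like your argument, treats $\pa_-U$ as strictly stable when it invokes Corollary \ref{cor: maximal smooth stable foliation}; your observation that a marginally stable $\pa_-U$ requires seeding via Galloway's foliation (Proposition \ref{marginally stable foliation}) is sensible, but note that Galloway's slices are not a priori stable, so it is not immediate that one can hand off from a Galloway slice to the stable foliation machinery — this needs a sentence of justification rather than being dismissed as "a little extra care."
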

\begin{proof}
By Proposition \ref{prop: thin domain}, $\pa U$ has exactly two component $\pa _- U = "f^{-1}(-\infty)"$ and $\pa_+ U = "f^{-1}(\infty)"$ with the same constant expansion $\Th$. We simply denote $\pa_- U$ by $\Si$ and denote $\pa_+ U$ by $\Si'$. Let $\nu$ and $\nu'$ be unit normal vector field on $\pa_- U$ and $\pa_+ U$ respectively chosen as in Theorem \ref{thm: local convergence}. Thus, $\pa_+ U$ is on the $+\nu$-side of $\pa_- U$. There exists $\rho_0$ depending on the geometry of $\Si$ in $(M,g)$ such that the Fermi coordinates $\Up: \Si \times (-\rho_0, \rho_0) \to M: (x, \si) \to \exp_x ( \si \nu(x) )$ is bijective. Let $w\in C^{2, \al}(\Si)$ with $\|w\|_0 < \rho_0$. Let $\fgr(w)$ denote the graph of $w$ in Fermi coordinates around $\Si$ and let $\th(w)$ denote the expansion of $\fgr(w)$ with respect to $\pa_\si^\perp / |\pa_\si^\perp|$ where $\pa_\si^\perp$ is the projection of $\pa_\si$ onto the normal space of $\fgr(w)$. By the nature of linearization operator $\cL_\Si$, we define the deviation of $\big(\th(w)$ from its linear approximation around $\Si$:
\begin{equation}\label{eq: Lw}
	Q(w) := \big(\th(w) - \Th\big) - \cL_\Si w.
\end{equation}
The dominant (quadratic) part of $Q$ depends also on $Dw$ and $\D^2 w$. Here the notation $Q(w)$ is treated as a functional to simplify the notation. There exist constants $R_0, A$ depending on geometry of $\Si$ in $(M,g)$ and $k$ such that $0< \rho_1 < \rho_0$ and for $\|w\|_{2, \al}\leq \rho_1$ 
\begin{equation}\label{bound for Q(w) 1}
	\|Q(w)\|_{0, \alpha} \leq A \|w\|_{2, \al}^2.
\end{equation}
The standard Schauder estimates applied to (\ref{eq: Lw}) implies that there exists a constant $C$ depending on geometry of $\Si$ in $(M,g)$ and $k$ such that 
\begin{equation}\label{Schauder 1}
	\|w\|_{2,\al} \leq C \big(\|w\|_{0} + \|\th(w) - \Th\|_{0,\al} + \|Q(w)\|_{0,\al} \big).
\end{equation}
Put (\ref{bound for Q(w) 1}) into (\ref{Schauder 1}), we obtain
\begin{align*}
	\|w\|_{2,\al} \leq C \big(\|w\|_{0} + \|\th(w) - \Th\|_{0,\al} \big) + AC\|w\|_{2,\al}^2.
\end{align*}
Let $\de \in (0,1)$. If $\|w\|_{2, \al} \leq \de A^{-1}C^{-1}$, then we have estimates
\begin{align}\label{Schauder 2}
	\|w\|_{2,\al} \leq  (1 - \de)^{-1} C \big(\|w\|_{0} + \|\th(w) - \Th\|_{0,\al} \big),
\end{align}
and
\begin{align}\label{bound for Q(w) 2}
	\|Q(w)\|_{0,\al} \leq \eta\big(\|w\|_{0} + \|\th(w) - \Th\|_{0,\al} \big)
\end{align}
where  $\eta :=  \frac{\de C}{1-\de}\in (0,1)$ if $\de$ is chosen small enough. Take $0< \rho_2< \rho_1$ such that 
\begin{equation}
	\rho_2 < \de(1-\de)A^{-1}C^{-2}.
\end{equation}
Then (\ref{Schauder 2}) and (\ref{bound for Q(w) 2}) hold true as long as $\|w\|_{0} + \|\th(w) - \Th\|_{0,\al} \leq \rho_2$ by using continuity argument along the family $\{sw\}_{0\leq s\leq 1}$. In particular, suppose $\Si'$ can be expressed as the graph of $v>0$ with $\|v\|_0 \leq \rho_2$, then $\th(v) = \Th$  implies
\begin{equation}\label{eq: Lv}
	\cL_\Si v = -Q(v).
\end{equation}
In this case, (\ref{Schauder 2}) and (\ref{bound for Q(w) 2}) can reduced to 
\begin{equation}
	\|v\|_{2,\al} \leq  (1 - \de)^{-1} C \|v\|_{0},
\end{equation}
and
\begin{align}\label{bound for Q(v)}
	\|Q(v)\|_{0,\al} \leq \eta\|v\|_{0}.
\end{align}

Now since $\Si$ is strictly stable, by Corollary \ref{cor: maximal smooth stable foliation} there exists a maximal foliation $\Psi$ of CES initiated from $\Si$ toward the $+\nu$-side. Let $\tau$ be a small positive number, and let $w_\tau>0$ represent the sheet $\Psi(\Th + \tau, \Si)$ in the maximal foliation $\Psi$ satisfying $\th(w_\tau) = \Th + \tau$. Then $w_\tau$ satisfies
\begin{equation}\label{eq: Lw_sigma}
	\cL_\Si w_\tau =  \tau - Q(w_\tau). 
\end{equation}
Since $w_\tau$ and $v$ are both positive, there exists a number $a>0$ and a point $z\in \Si$ such that
\begin{equation}\label{ineq: 5.11}
	av\leq w_\tau \quad \mbox{and the equality holds at $z$}.
\end{equation}
By derivative tests, 
\begin{align*}
	0&\geq \cL_\Si(w - av)(z) = \tau - Q(w_\tau)(z) + a Q(v)(z)\\
	&\geq \tau - \eta(\|w_\tau\|_0 + \tau) - a \eta \|v\|_0\quad \mbox{by (\ref{bound for Q(w) 2}) and (\ref{bound for Q(v)})}\\
	&\geq (1-\eta) \tau - 2\eta \|w_\tau\|_0 \quad \mbox{by (\ref{ineq: 5.11})}
\end{align*}
This implies that 
\begin{equation}\label{bound for sigma}
	\tau \leq 2\eta(1-\eta)^{-1}\|w_\tau\|_0.
\end{equation}
Again by continuity argument, (\ref{bound for sigma}) holds true so long as $\|w_\tau\|_0 \leq [1+2\eta(1-\eta)^{-1}]^{-1}\rho_2:= \rho_3$. In this case, the Schauder estimates (\ref{Schauder 2}) can be further reduced to
\begin{equation}\label{Schauder 3}
	\|w_\tau\|_{2,\al} \leq (1 - \de)^{-1}[1+2\eta(1-\eta)^{-1}]C \cdot \|w_\tau\|_0.
\end{equation}
Combining (\ref{bound for Q(w) 2}) and (\ref{bound for sigma}), we have
\begin{equation}
	\|\cL_\Si w_\tau\|_{0} \leq 3\eta\|w\|_0.
\end{equation}
Then the Harnack inequality applied to (\ref{eq: Lw_sigma}) implies that there exists a constant $\La$ depending on geometry of $\Si$ in $(M,g)$ and $k$ such that 
\begin{equation}
	\|w_\tau\|_0 \leq \La \big( \min w_\tau + \|\cL_\Si w_\tau\|_0 \big)\leq \La \min w_\tau + 3\eta\La \|w_\tau\|_0.
\end{equation}
If $\de$ is chosen small enough (and so is $\eta$) such that $3\eta\La < 1$, then
\begin{equation}\label{lower range of foliation}
	\min w_\tau \geq (1-3\eta\La)\La^{-1}\|w_\tau\|_0.
\end{equation}
Set $\rho_4:= \frac12(1-3\eta\La)\La^{-1}\rho_3$. From (\ref{Schauder 3}), we find the sheet $\Psi(\Th+ \tau, \Si)$ is $C^{2,\al}$ if $\|w_\tau\|_0 \leq \rho_3$.  If the sheets remain stable, then the foliation $\Psi$ would continue and by (\ref{lower range of foliation}) sweep the region $[0, \rho_4]\times \Si$ in Fermi coordinates. On the other hand, the Harnack inequality applied to (\ref{eq: Lv}) implies that
\begin{equation}
	\|v\|_0 \leq \La \big( \min v + \|\cL_\Si v\|_0 \big)\leq \La \min v + \eta\La \|v\|_0.
\end{equation}
Thus, 
\begin{equation}\label{upper bound of the v}
	\|v\|_0 \leq (1-\eta\La)^{-1} \La \min v.
\end{equation}
We set $R: = (1-\eta\La) \La^{-1}\rho_4$.  If $\tau(U)\leq R$, then $\min v\leq \tau(U)$ and (\ref{upper bound of the v}) implies that $\Si' \subset (0,\rho_4]\times \Si$. It follows that there exists a sheet $\Si_\tau := \Psi(\Th+ \tau, \Si)$ in the foliation $\Psi$ for some positive number $\tau$ such that $\Si_\tau$ lies on the $+\nu'$-side of $\Si'$ and contacts $\Si'$ at a point, say $p$. By maximal principle,
\begin{equation}
	\rH_{\Si_\tau}(p) \geq \rH_{\Si'}(p).
\end{equation}
But this contradicts to the fact that $\th_{\Si'} = \Th < \Th + \tau =\th_{\Si_\tau}$. This means that the foliation $\Psi$ towards the $+\nu$-side of $\Si$ must terminate at a marginally stable CES $\Si_1$ which has nonempty intersection with $U$. Analogously, the maximal foliation $\Psi'$ of CES initiated from $\Si'$ towards the $-\nu'$-side must terminate at a marginally stable CES $\Si_2$ which has nonempty intersection with $U$.
\end{proof}

\subsection{Structure theorem}
In the subsection, we will investigate the structure of component $\Om$ of $\Om_+$. We begin with considering $D = \{r_k\}_{k = 1}^\infty\subset  \bar{\Om}$ a dense countable subset. We will apply Theorem \ref{thm: local convergence} multiple times without explicitly mentioning it throughout this subsection. Use diagonal argument and relabeling index $j$, we may assume for all $r_k\in D$ the sequence $\gr(\tilde{f}_{s_j}^{(r_k)})$ converges in $C_{loc}^\infty$ to either a maximal graph or a cylinder over a closed smooth surface. We then decompose $\N$ as $A\sqcup B$ where 
\begin{align*}
    k\in A &: \mbox{$\tilde{f}_{s_{j}}^{(r_k)}$ converges in $C^\infty_{loc}$ to $\tilde{f}_{0}^{(r_k)}$ on maximal domain $U_{r_k}\subset E_{u(r_k)}(u, \Om)$},\\
    k\in B &: \mbox{$\gr(\tilde{f}_{s_{j}}^{(r_k)})$ converges in $C^\infty_{loc}$ to a cylinder over $\Si_{r_k}\subset E_{u(r_k)}(u, \Om)$}.
\end{align*}

\begin{lem}\label{avoidance property}
$\{U_{r_k}\}_{k\in A}$ and $\{\Si_{r_\ell}\}_{\ell\in B}$ satisfy avoidance property. More precisely, 
\begin{enumerate}
    \item[(1)] For $k\in A$ and $\ell\in B$, $U_{r_k}\cap \Si_{r_\ell}=\emptyset$ ; 
    \item[(2)] If $U_{r_k} \cap U_{r_\ell} \neq \emptyset$ for $k,\ell\in A$, then $U_{r_k} = U_{r_\ell}$ ;
    \item[(3)] If $\Si_{r_k}\cap \Si_{r_\ell} \neq \emptyset$ for $k,\ell\in B$, then $\Si_{r_k} = \Si_{r_\ell}$ .
\end{enumerate}
\end{lem}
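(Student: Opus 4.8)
The plan is to prove all three avoidance statements uniformly by the same mechanism: if two of these limit objects (maximal domains and cylinders) intersect at a point $x$, then by re-centering the blowup sequence at $x$ one obtains, from Theorem \ref{thm: local convergence}, a \emph{unique} local limit of $\gr(\tilde{f}_{s_j}^{(x)})$ up to vertical translation, and both original limit objects must coincide with that local limit near $x$. The precise tool is the last sentence of the proof of Theorem \ref{thm: local convergence}: once a subsequence of $\gr(\tilde f_{s_j}^{(r_k)})$ converges (graphically or cylindrically) near a point $y$, picking $y$ as a new reference point forces \emph{every} subsequence of $\gr(\tilde f_{s_j}^{(y)})$ to converge in the same way to the same component (translated to pass through $(y,0)$). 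So the local limit at any point is well defined and independent of which reference sequence produced it.

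First I would set up notation: suppose $x$ lies in the intersection of two of these objects; since $D$ is dense I can find $r_m \in D$ arbitrarily close to $x$ and, more usefully, I will simply work at $x$ itself, using the ``change of basepoint'' principle above. Concretely, for (1): if $x \in U_{r_k}\cap \Si_{r_\ell}$, then $x\in U_{r_k}$ means $\gr(\tilde f_{s_j}^{(r_k)})$ converges graphically near $x$, so (after translating by $\tilde f_{0}^{(r_k)}(x)$) $\gr(\tilde f_{s_j}^{(x)})$ converges graphically near $x$; but $x\in \Si_{r_\ell}$ means $\gr(\tilde f_{s_j}^{(r_\ell)})$ converges cylindrically near $x$, so $\gr(\tilde f_{s_j}^{(x)})$ converges cylindrically near $x$. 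By the uniqueness of the local limit in Theorem \ref{thm: local convergence} (a subsequence cannot converge both graphically and cylindrically near the same point — these are the mutually exclusive alternatives (1) and (2) of that theorem, distinguished by whether $|\D f_{s_j}(x)|$ stays bounded or diverges), this is a contradiction, so $U_{r_k}\cap\Si_{r_\ell}=\emptyset$.

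For (2): if $x\in U_{r_k}\cap U_{r_\ell}$, then both $\tilde f_{0}^{(r_k)}$ and $\tilde f_{0}^{(r_\ell)}$ are, up to an additive constant, the unique graphical local limit of $\gr(\tilde f_{s_j}^{(x)})$ near $x$; hence $\tilde f_{0}^{(r_k)}$ and $\tilde f_{0}^{(r_\ell)}$ differ by a constant on the connected component of $U_{r_k}\cap U_{r_\ell}$ containing $x$. Both functions satisfy the same constant expansion equation (\ref{Eq:ConstExp}) — note $u(r_k)=u(r_\ell)$, since both equal $\Th=u(x)$ by Lemma \ref{Lem:LevelSet}(1) together with $U_{r_k}, U_{r_\ell}\subset u^{-1}(\cdot)$ — and each blows up to $-\infty$ exactly on approach to its own boundary. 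Since they agree up to a constant on an open set and solve the same elliptic equation, unique continuation forces them to agree up to that constant on the whole common connected component of their domains, which by the maximality (blowup at the boundary) of both $U_{r_k}$ and $U_{r_\ell}$ must be all of $U_{r_k}$ and all of $U_{r_\ell}$; therefore $U_{r_k}=U_{r_\ell}$ (and in fact $\tilde f_{0}^{(r_k)}$ and $\tilde f_{0}^{(r_\ell)}$ differ by that constant). For (3): if $x\in \Si_{r_k}\cap \Si_{r_\ell}$, then both cylinders $\Si_{r_k}\times\R$ and $\Si_{r_\ell}\times\R$ equal the unique cylindrical local limit of $\gr(\tilde f_{s_j}^{(x)})$ near $x$; hence $\Si_{r_k}$ and $\Si_{r_\ell}$ are closed smooth embedded surfaces agreeing near $x$, solving the same constant expansion equation $\rH_{\Si}-\rK_{\Si}=u(x)$ with the same (compatible) unit normal, so again by unique continuation for the CES equation and closedness of both surfaces, $\Si_{r_k}=\Si_{r_\ell}$.

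The main obstacle is making the ``change of basepoint and uniqueness of the local limit'' argument airtight, i.e. verifying that if $x\in U_{r_k}$ then the translated sequence $\gr(\tilde f_{s_j}^{(x)})$ genuinely has a unique local limit near $x$ that is the graph of $\tilde f_{0}^{(r_k)}$ up to the vertical shift — this is exactly the content of the concluding paragraph of the proof of Theorem \ref{thm: local convergence}, applied with $y=x$, and one must check that the diagonal-subsequence relabeling done before Lemma \ref{avoidance property} is compatible with this (it is, since we fixed one subsequence $\{s_j\}$ along which convergence holds simultaneously at every $r_k$, and the change-of-basepoint statement applies to that fixed subsequence). The remaining ingredient, unique continuation for solutions of (\ref{Eq:ConstExp}) (a second-order quasilinear elliptic equation with smooth coefficients) and for the CES surface equation, is standard. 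Everything else — matching of the constant $\Th$ via Lemma \ref{Lem:LevelSet}, compatibility of unit normals via the formula $\nu(y)=\lim \D f_{s_j}(y)/\sqrt{1+|\D f_{s_j}(y)|^2}$ in Theorem \ref{thm: local convergence} — is bookkeeping.
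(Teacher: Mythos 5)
Your proposal matches the paper's proof of part (1) exactly: intersecting a maximal domain and a cylinder at $p$ forces $\lim_j|\D f_{s_j}(p)|$ to be simultaneously finite and infinite, a contradiction.

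For parts (2) and (3) you take a somewhat different route from the paper, and on balance the paper's route is cleaner and more airtight. For (2), instead of ``local agreement up to a constant near $x$, then unique continuation,'' the paper simply invokes the \emph{conversion identity}
\begin{equation*}
\tilde{f}_{s_j}^{(r_k)}(x) - \tilde{f}_{s_j}^{(r_\ell)}(x) = \tilde{f}_{s_j}^{(r_k)}(r_\ell) ,
\end{equation*}
whose right-hand side is independent of $x$. Taking $j\to\infty$ at the intersection point $p$ shows the right-hand side has a finite limit, and then passing to the limit for arbitrary $x$ gives that $\tilde f_0^{(r_k)}$ and $\tilde f_0^{(r_\ell)}$ differ by a \emph{global} constant on $U_{r_k}\cup U_{r_\ell}$ with no PDE unique continuation needed; maximality of the two domains then forces $U_{r_k}=U_{r_\ell}$ immediately. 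Your detour through unique continuation for the quasilinear constant expansion equation is correct in substance but superfluous.

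For (3), the gap in your proposal is the step ``$\Si_{r_k}$ and $\Si_{r_\ell}$ agree near $x$ $\Rightarrow$ $\Si_{r_k}=\Si_{r_\ell}$ by unique continuation for the CES surface equation and closedness.'' Propagating coincidence of two closed embedded surfaces from an open patch to the whole surface is not a one-line consequence of PDE unique continuation: one has to run an open-and-closed argument in which the closedness step --- showing that a limit of coincidence points is again an interior coincidence point --- is exactly where a maximum-principle-type comparison is needed, since a priori $\Si_{r_\ell}$ need not be close to $\Si_{r_k}$ past the coincidence set, so one cannot simply write one as a Fermi graph over the other. The paper circumvents this entirely: it shows $\Si_{r_k}$ and $\Si_{r_\ell}$ are tangent at $p$ (common unit normal from Theorem \ref{thm: local convergence}(2)), rules out crossing by applying the one-sided consequences (a),(b) of Theorem \ref{thm: local convergence}(2) together with the conversion identity at points $q_\pm$ on opposite sides (forcing $\lim_j\tilde f_{s_j}^{(r_k)}(r_\ell)$ to be both $+\infty$ and $-\infty$), and then applies the strong maximum principle to two tangent, non-crossing, same-expansion surfaces --- this is exactly the standard and robust way to conclude $\Si_{r_k}=\Si_{r_\ell}$. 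Your ``local agreement via change of basepoint'' premise is consistent with the last sentence of the proof of Theorem \ref{thm: local convergence}, but the paper deliberately avoids leaning on that informal remark and instead gives a self-contained crossing argument; you should either do the same or spell out the open-closed argument carefully.
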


\begin{proof}
To prove (1), suppose $p\in U_{r_k}\cap \Si_{r_\ell}$. By Theorem \ref{thm: local convergence}, as $p\in U_{r_k}$ 
\begin{align*}
    \lim_{j\to \infty} |\D f_{s_{j}}(p)|  < +\infty.
\end{align*}
and as $p\in \Si_{r_\ell}$
\begin{align*}
    \lim_{j\to \infty} |\D f_{s_{j}}(p)| = +\infty.
\end{align*}
It leads to a contradiction. 

To prove (2), suppose $p\in U_{r_k}\cap U_{r_\ell}$.  By definition, we have the conversion identity
\begin{equation}\label{conversion identity}
    \tilde{f}_{s_j}^{(r_k)}(x) - \tilde{f}_{s_j}^{(r_\ell)}(x) = \tilde{f}_{s_j}^{(r_k)}(r_\ell)
\end{equation}
for all $j, k, \ell\in \N$ and $x\in M$. It follows that by letting $s_j\to 0^+$
\begin{equation*}
    \tilde{f}_0^{(r_k)}(r_\ell) = \tilde{f}_0^{(r_k)}(p) - \tilde{f}_0^{(r_\ell)}(p),
\end{equation*}
and thus
\begin{equation*}
    \tilde{f}_0^{(r_k)}(x) - \tilde{f}_0^{(r_\ell)}(x) = \tilde{f}_0^{(r_k)}(r_\ell)
\end{equation*}
for all $x\in U_{r_k}\cup U_{r_\ell}$. This means that $\tilde{f}_0^{(r_k)}, \tilde{f}_0^{(r_\ell)}$ only differ by a constant. By Theorem \ref{thm: local convergence}, $u \equiv u(p)$ in $U_k\cup U_{r_\ell}$ and $\tilde{f}_0^{(r_k)}, \tilde{f}_0^{(r_\ell)}$ are both solutions to $\rH(f) - \rK(f) = u(p)$. Since $U_{r_k}$ and $U_{r_\ell}$ are maximal domains in the sense that solutions blow up on the boundary, we immediately have $U_{r_k} = U_{r_\ell}$.  

To show (3), suppose $p\in \Si_{r_k}\cap \Si_{r_\ell}$. We first claim that $\Si_{r_k}$ and $\Si_{r_\ell}$ contact at $p$ but can not cross each other. By Theorem \ref{thm: local convergence}, $\lim_{j\to \infty} \frac{D_{s_j}(p)}{\sqrt{1+ |D_{s_j}(p)|^2}}$ is the common unit normal to $\Si_{r_k}$ and $\Si_{r_\ell}$ along which the expansions are both $u(p)$. Suppose $\Si_{r_k}$ crosses $\Si_{r_\ell}$, then there are points $q_\pm$ in $\cN_{\de_{r_k}}^\pm(\Si_{r_k},\nu_{\Si_{r_k}})\cap\cN_{\de_{r_\ell}}^\mp(\Si_{r_\ell},\nu_{\Si_{r_\ell}})$ respectively. It follows from Theorem \ref{thm: local convergence} and the conversion identity (\ref{conversion identity}) at $q_\pm$ that $\lim_{j\to \infty} \tilde{f}_{s_{j}}^{(r_\ell)}(r_k)$ is both $+\infty$ and $-\infty$, which is a contradiction. Therefore, $\Si_{r_k}$ and $\Si_{r_\ell}$ contact each other from one side and both have constant expansion $u(p)$. By strong maximum principle, we find $\Si_{r_k} = \Si_{r_\ell}$. 
\end{proof}

\begin{thm}[Structure Theorem]\label{thm: structure theorem}
Assume that any compact subset of $M$ contains only finitely many marginally stable CES. Let $u$ be a capillary blowndown limit of $f_s$ and let $\Om\subset \Om_+$ be a component of blowup region, say $f_{s_j}\to +\infty$ on $\Om$ and $s_j f_{s_j}\to u$ uniformly on $\Om$.  Then there exists a partition
\begin{align*}
    \Bar{\Om} = (\bigcup_{m = 1}^{N_1} U_m ) \cup (\bigcup_{n = 1}^{N_2} \Phi_n([0, b_n]\times \Si_n))
\end{align*}
where $1\leq N_1, N_2 <\infty$, $U_m$ is a maximal domain of a solution to constant expansion equation $\rH(f)-\rK(f) = u(U_m)$, and $\Phi_n:[0,b_n]\times \Si_n\rightarrow M$ is a smooth foliation of closed CES with $\theta|_{\Phi(\cdot, \Si_n)} = u|_{\Phi(\cdot, \Si_n)}$ with $b_n\geq 0$ (if $b_n = 0$ the foliation degenerates to one sheet of CES). 
\end{thm}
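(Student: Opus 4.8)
\textit{Plan.} The plan is to upgrade the countable data extracted just before Lemma~\ref{avoidance property} into a finite geometric partition of $\bar\Om$. The diagonal subsequence together with the avoidance property already furnishes countably many pairwise disjoint maximal domains $\{U_{r_k}\}_{k\in A}$ and countably many pairwise disjoint closed CES $\{\Si_{r_\ell}\}_{\ell\in B}$, all with uniformly controlled geometry since their expansions lie in $[-\mu_1,\mu_1]$ (Propositions~\ref{Prop: LocalEst} and~\ref{prop: Local est for foliation of CES}) and all stable (Proposition~\ref{prop: stability of CES}). The first task is to show that these data exhaust $\bar\Om$ up to a leftover set foliated by closed CES; the second is to bound everything by finitely many pieces.

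\textit{Step 1: the leftover is a union of CES foliations.} Set $\mathcal{U}:=\bigcup_{k\in A}U_{r_k}$, open in $\Om$. For $x\in\Om\setminus\mathcal{U}$ apply Theorem~\ref{thm: local convergence} at $x$: the graphical alternative is impossible, since it would place $x$ in a maximal domain $U_x$, which by density of $D$ and Lemma~\ref{avoidance property}(2) equals some $U_{r_k}$; hence $x$ lies on a closed smooth CES $\Si_x\subset u^{-1}(u(x))$ with expansion $u(x)$. Exactly as in Lemma~\ref{avoidance property}(3), two such CES cannot cross, so $\Om\setminus\mathcal{U}$ is a disjoint union of closed CES. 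Around each of them one has a smooth local CES foliation --- by Corollary~\ref{cor: maximal smooth stable foliation} if the leaf is strictly stable, by Proposition~\ref{marginally stable foliation} if it is marginally stable --- and the local uniqueness clauses in those statements glue the local models into smooth foliations of $\overline{\Om\setminus\mathcal{U}}$. Along each such foliation the expansion, equivalently the constant value of $u$ on each leaf (by Theorem~\ref{thm: local convergence} and the Comparison Theorem~\ref{thm: comparison theorem}), is strictly monotone, since no leaf lies in the interior of a level set of $u$; continuing each foliation maximally, it can only terminate at a boundary component of some $U_{r_k}$, at a component of $\pa\Om$ (an apparent horizon), or --- because strictly stable leaves always continue by Corollary~\ref{cor: maximal smooth stable foliation} --- at a marginally stable CES. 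Parametrising each maximal foliation by arc length along a transverse geodesic, compactness of $\bar\Om$ gives finite length $b_n$, yielding maps $\Phi_n:[0,b_n]\times\Si_n\to M$ with $\theta|_{\Phi_n(\cdot,\Si_n)}=u|_{\Phi_n(\cdot,\Si_n)}$.

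\textit{Step 2: finiteness.} For $N_1$, a uniform lower bound on volumes of geodesic balls in the compact set $\bar\Om$ shows that only finitely many of the pairwise disjoint $U_{r_k}$ have thickness $\ge R_0$; every remaining one is thin, hence an annular tube whose two boundary CES are stable, so by Proposition~\ref{prop: unstable CES in thin maximal domain} (applicable with uniform thresholds, thanks to the uniform geometry bounds of Step~1) a sufficiently thin tube produces in its closure a closed smooth marginally stable CES. If infinitely many distinct thin tubes occurred, a subsequence of their stable boundary CES would converge in $C^{3,\al}$ to a CES which, by continuity of the principal eigenvalue, is stable and --- by the local uniqueness of Corollary~\ref{cor: maximal smooth stable foliation}, which leaves no room for infinitely many maximal-domain boundaries near a strictly stable CES --- is in fact marginally stable; iterating this, the set of marginally stable CES in $\bar\Om$ would be infinite, contradicting the standing hypothesis, so $N_1<\infty$. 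For $N_2$, each maximal foliation $\Phi_n$ is determined by its unordered pair of terminal leaves, and by Step~1 each terminal leaf is a boundary component of one of the finitely many $U_{r_k}$, a component of the finitely many apparent horizons in $\pa\Om$ (Proposition~\ref{prop: SY Jang equation}), or one of the finitely many marginally stable CES in $\bar\Om$; finitely many admissible pairs give $N_2<\infty$. Finally $N_1\ge1$ because $u$ attains $\max_{\bar\Om}u$ in the interior of a level set (Theorem~\ref{thm:interior of level set}) and $N_2\ge1$ because $\pa\Om$ is a nonempty apparent horizon; by construction the $U_m$ and the $\Phi_n([0,b_n]\times\Si_n)$ cover $\bar\Om$ and overlap only along terminal leaves, which, reorganised into the closed foliation pieces, produces the asserted partition.

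\textit{Main obstacle.} The delicate parts are Step~1 --- coherently patching the strictly stable model (Corollary~\ref{cor: maximal smooth stable foliation}) and the marginally stable model (Proposition~\ref{marginally stable foliation}) into genuinely smooth global foliations, and invoking the Comparison Theorem to certify that $u$ is constant on each leaf with value equal to its expansion --- and the accumulation argument in Step~2 showing that infinitely many thin maximal domains would force infinitely many marginally stable CES in $\bar\Om$, which is precisely where the standing hypothesis is used.
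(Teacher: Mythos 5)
Your proposal tracks the paper's strategy closely — diagonal subsequence, the avoidance lemma, local stable/marginally-stable foliations glued by local uniqueness, and finiteness of $N_1$ forced by the hypothesis on marginally stable CES — but the finiteness argument for $N_1$ is where your write-up has a genuine gap. You begin correctly (volume argument shows only finitely many $U_{r_k}$ can have $\tau(U_{r_k})\ge R_0$; thin tubes have stable boundaries; Proposition~\ref{prop: unstable CES in thin maximal domain} yields a marginally stable CES in a sufficiently thin tube), but then switch mid-paragraph to a limit argument: extract an accumulation CES $\Si^*$, argue it is marginally stable, and conclude ``iterating this'' that $\bar\Om$ contains infinitely many marginally stable CES. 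That last step is unjustified: a single accumulation CES $\Si^*$ being marginally stable is not a contradiction with the standing hypothesis (which allows finitely many), and ``iterating'' does not obviously produce distinct marginally stable CES — subsequent accumulation points could coincide with $\Si^*$. The paper's argument is different in the crucial detail: it extracts the accumulation CES $\Si^*$ \emph{in order to fix the threshold} $R = R(\Si^*)$ of Proposition~\ref{prop: unstable CES in thin maximal domain}, then observes that for large $m'$ the boundaries $\pa_\pm U_{m'}$ are small graphs over $\Si^*$, so $\tau(U_{m'})\le R$ and the proposition applies directly, producing one marginally stable CES $\tilde\Si_{m'}$ inside each $U_{m'}$; distinctness is then argued from the avoidance property and the relative positions of $\tilde\Si_{m'}$, $U_{m'}$, $\Si^*$. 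Your phrase ``applicable with uniform thresholds, thanks to the uniform geometry bounds of Step~1'' is asserting the conclusion of this step without proving it — uniform a priori bounds on the boundary CES do not by themselves produce a threshold $R$ that is below the thickness of any of the tubes you wish to apply it to; the convergence to a fixed $\Si^*$ is what makes this quantitative.

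A secondary inconsistency: in Step~1 you correctly note that Proposition~\ref{marginally stable foliation} lets you \emph{continue} the foliation through a marginally stable leaf, yet in Step~2 you count foliation pieces by letting terminal leaves be ``one of the finitely many marginally stable CES in $\bar\Om$.'' If the foliation continues through such a leaf, it is not a terminal leaf; the paper avoids this by bounding $N_2$ directly by the number of components of $\bigcup_m\pa U_m\cup\pa\Om$. Your monotonicity remark (``the expansion is strictly monotone along each foliation'') is also not a consequence of anything proved — it can fail along the foliation of Proposition~\ref{marginally stable foliation}, whose parameter is $\int_\Si w$ rather than the expansion — though it is not load-bearing. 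The remainder of Step~1, showing every $x\in\Om\setminus\mathcal{U}$ lies on a closed CES via Theorem~\ref{thm: local convergence} plus the avoidance property, is a slightly more explicit rendering of the paper's density argument and is fine.
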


\begin{proof}[Proof of Theorem \ref{thm: structure theorem}]
For simplicity, we identify and then relabel the objects in $\{U_{r_k}\}$ as $\{U_m\}_{m=1}^{N_1}$ such that $U_m \cap U_n = \emptyset$ if $m \neq n$. 

We prove that $1\leq N_1 < \infty$. Theorem \ref{thm:interior of level set} implies that $N_1\geq 1$. Suppose $N_1 = \infty$. By compactness of $\bar{\Om}$, avoidance property of ${U_m}$ and local estimates in Proposition \ref{Prop: LocalEst}, there exists a subsequence $\{U_{m'}\}$ and an accumulation CES $\Si^*$ such that $\tau(U_{m'})<R_0$ as in Proposition \ref{prop: thin domain} and boundary components $\pa_\pm U_{m_k}$ converge to $\Si$ from one side smoothly as $m\to \infty$. Proposition \ref{prop: unstable CES in thin maximal domain} gives the constant $R$ depending only on the geometry of $\Si^*$ in $(M,g)$ and $k$. For large enough $m'$, components $\pa_\pm U_{m'}$ can be written as graphs over $\Si^*$ with very small sup-norm, say less than $R$. This means that $\tau(U_{m'})\leq R$. By the virtue of estimate (\ref{Schauder 3}), we may assume $R$ is also applicable to $\pa_\pm U_{m'}$ for sufficiently large $m'$. By finiteness of the number of marginally stable CES in $\Om$, components of $\pa_\pm U_{m'}$ are strictly stable except finitely many. By Proposition \ref{prop: unstable CES in thin maximal domain}, for every sufficiently large $m'$ there exists a closed smooth marginally stable CES $\tilde{\Si}_{m'}$ which lies between $\Si^*$ and the further boundary component of $U_{m'}$ such that $U_{m'}\cap \tilde{\Si}_{m'}\neq \emptyset$. These CES $\Si_{m'}$ are distinct because of avoidance property of $\{U_{m'}\}$ and $\tilde{\Si}_{m'}$'s relative position to $U_{m'}$ and $\Si^*$. This means that there are infinitely many distinct closed smooth marginally stable CES in $\bar{\Om}$, a contradiction to our assumption. Thus, $N_1<\infty$. As a consequence, we have rather simple topological relations: $\mathrm{Int}(\bar{\Om} - \bigcup_{m = 1}^{N_1} U_{m}) = \Om - \bigcup_{m = 1}^{N_1} \bar{U}_{m}$ and $\pa(\bar{\Om} - \bigcup_{m = 1}^{N_1} U_{m}) = \bigcup_{m = 1}^{N_1} \pa U_{m} \cup \pa \Om$.

By Proposition \ref{prop: stability of CES}, for every $r_k\in \Om - \bigcup_{m = 1}^{N_1} \bar{U}_{m}$ where $k\in B$, $\Si_{r_k}$ is a stable with expansion $u(r_k)$. Then we can use Proposition \ref{prop: local stable foliation} for strictly stable CES or \ref{marginally stable foliation} for marginally stable CES to construct a unique local foliation of CES around $\Si_{r_k}$. Since $D\cap (\Om - \bigcup_{m = 1}^{N_1} \bar{U}_{m})$ is a dense subset in $\Om - \bigcup_{m = 1}^{N_1} \bar{U}_{m}$,  we conclude that each (open) connected component of $\Om - \bigcup_{m = 1}^{N_1} \bar{U}_{m}$ is a foliation of CES. All such foliations can be uniquely and smoothly extended to CES in $\bigcup_{m = 1}^{N_1} \pa U_{m} \cup \pa \Om$ by connectness of $\Om$. There may also exist some isolated CES's which are either the common boundaries of two adjacent maximal domains or components of $\pa \Om$. These isolated CES can be express as degenerate foliations. Therefore, the total number of components of foliations of CES is bounded by the number of components of $\bigcup_{m = 1}^{N_1} \pa U_{m}\cup \pa \Om$.
\end{proof}

\begin{rmk}
\begin{enumerate}
	\item[(1)] Without the assumption of finite marginally stable CES in compact sets, there may be infinitely many disjoint maximal domains. This would causes the complexity of the topology of blowup region and capillary blowdown limit.
	\item[(2)] Although we have $u\in C^{0,1}$ from the construction in Section 3, $u$ is generally not $C^1$.
\end{enumerate}
\end{rmk}

\begin{cor}\label{cor: everywhere convergence subsequence}
Under the assumption of Theorem \ref{thm: structure theorem}, there exists a sequence $s_j\to 0^+$ such that the sequence of graphs of translated functions $\tilde{f}_{s_j}^{(x_0)}$ converges to a smooth submanifold in an open neighborhood of $(x_0, 0)\in M\times\R$ for all $x_0\in \bar{\Om}$. 
\end{cor}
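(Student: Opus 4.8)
The plan is to show that the very sequence $\{s_j\}$ fixed at the start of the proof of the Structure Theorem \ref{thm: structure theorem} already does the job. Recall that after the diagonal extraction there, $\gr(\tilde f^{(r_k)}_{s_j})$ converges in $C^\infty_{loc}$ for every $r_k$ in the countable dense set $D\subset\bar\Om$, to a maximal graph $\gr(\tilde f^{(r_k)}_0,U_{r_k})$ when $k\in A$ and to a cylinder $\Si_{r_k}\times\R$ when $k\in B$. The goal is to promote this to $C^\infty_{loc}$ convergence of $\gr(\tilde f^{(x_0)}_{s_j})$ near $(x_0,0)$ for \emph{every} $x_0\in\bar\Om$, and the two tools are the partition $\bar\Om=(\bigcup_m U_m)\cup(\bigcup_n\Phi_n([0,b_n]\times\Si_n))$ produced by Theorem \ref{thm: structure theorem} and the conversion identity $\tilde f^{(x_0)}_{s_j}(\cdot)=\tilde f^{(y)}_{s_j}(\cdot)-\tilde f^{(y)}_{s_j}(x_0)$.

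First I would dispose of the case $x_0\in U_m$. Here I would pick $r_k\in D\cap U_m$ (possible since $U_m$ is open and $D$ dense), noting that $U_{r_k}=U_m$ by the identification in the proof of Theorem \ref{thm: structure theorem} (via Lemma \ref{avoidance property}(2)); since $\tilde f^{(r_k)}_{s_j}\to\tilde f^{(r_k)}_0$ in $C^\infty_{loc}(U_{r_k})$ we get $\tilde f^{(r_k)}_{s_j}(x_0)\to\tilde f^{(r_k)}_0(x_0)\in\R$, and the conversion identity exhibits $\gr(\tilde f^{(x_0)}_{s_j})$ as a bounded vertical translate of $\gr(\tilde f^{(r_k)}_{s_j})$, hence convergent in $C^\infty_{loc}$ near $(x_0,0)$ to $\gr(\tilde f^{(r_k)}_0-\tilde f^{(r_k)}_0(x_0))$.

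The remaining case is $x_0\notin\bigcup_m U_m$, so $x_0$ lies on a closed CES sheet $\Si^*=\Phi_n(\tau^*,\Si_n)$ with $u(x_0)=\theta_{\Si^*}=:\Th$. By Proposition \ref{Prop: LocalEst} and Theorem \ref{thm: local convergence}, along any subsequence $\gr(\tilde f^{(x_0)}_{s_j})$ has a further subsequence $\{s_{j'}\}$ converging near $(x_0,0)$ either graphically or cylindrically. I would rule out the graphical alternative: its maximal domain $U_{x_0}$ is open, hence contains some $r_k\in D$, and running the conversion identity along $\{s_{j'}\}$ makes $\tilde f^{(r_k)}_{s_{j'}}$ converge graphically near $r_k$; by uniqueness of $C^\infty_{loc}$ limits this is also the full-sequence limit, so $k\in A$ and $U_{r_k}=U_{x_0}$ by maximality, whence $U_{x_0}$ is one of the $U_m$ and $x_0\in U_m$ --- a contradiction. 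So the subsequential limit is cylindrical, $\gr(\tilde f^{(x_0)}_{s_{j'}})\to\hat\Si\times\R$ with $\hat\Si$ a closed CES through $x_0$ of expansion $\Th$, and Theorem \ref{thm: local convergence}(2) supplies $\hat\de>0$ with $\tilde f^{(x_0)}_{s_{j'}}\to\pm\infty$ on $\cN_{\hat\de}^{\pm}(\hat\Si,\hat\nu)$.

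What is left --- and what I expect to be the main obstacle --- is to show that the germ of $\hat\Si$ at $x_0$ is independent of the subsequence and coincides with that of $\Si^*$, so that the full sequence converges near $(x_0,0)$. The idea is to pin down the two sides of $\hat\Si$ using information valid along the \emph{full} sequence: by Lemma \ref{Lem:LevelSet} applied with the constant reference point $x_0$, $\tilde f^{(x_0)}_{s_j}(r)\to+\infty$ (resp.\ $-\infty$) for every $r\in D$ with $u(r)>\Th$ (resp.\ $u(r)<\Th$); by Corollary \ref{cor: local extrema} the foliation region contains no strict local extremum of $u$ and no maximal domain, so $u$ is strictly monotone transverse to the sheets near $x_0$ and the sets $\{u>\Th\}$, $\{u<\Th\}$ accumulate on $\Si^*$ from its two sides. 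Matching this against the $\pm\infty$ behavior on $\cN_{\hat\de}^{\pm}(\hat\Si,\hat\nu)$ forces every such $\hat\Si$ to separate $\{u>\Th\}$ from $\{u<\Th\}$ near $x_0$, hence to agree there with $\Si^*=u^{-1}(\Th)$. In the degenerate case where $u$ is locally constant around $x_0$ I would instead compare $\hat\Si$ with the sheets of $\Phi_n$ directly via the strong maximum principle for the constant expansion equation, using that the ordering and the unit normals of those sheets near $x_0$ are determined along the full sequence by Theorem \ref{thm: local convergence}(2); the bookkeeping of normal orientations here is the delicate point. Granting this, every subsequence of $\gr(\tilde f^{(x_0)}_{s_j})$ has a sub-subsequence converging near $(x_0,0)$ to one and the same smooth submanifold (a fixed graph in the first case, $\Si^*\times\R$ in the second), so the full sequence converges there, as required.
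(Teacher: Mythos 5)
Your overall structure matches the paper's: fix the diagonal subsequence from the Structure Theorem, split on whether $x_0$ lies in a maximal domain $U_m$ or not, in the first case use the conversion identity to reduce to a dense point $r_k\in U_m$, in the second case rule out graphical convergence and then identify the cylinder. The first two steps are fine. But your argument for uniqueness of the cylinder $\hat\Si$ has a genuine gap, and you partly sense it yourself.

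The issue is that your proposed mechanism -- that $u$ is strictly monotone transverse to the sheets near $x_0$, so $\hat\Si$ must separate $\{u>\Th\}$ from $\{u<\Th\}$ and hence coincide with $u^{-1}(\Th)$ -- is not available in general. The foliation regions in Theorem~\ref{thm: structure theorem} are assembled from local foliations obtained via Proposition~\ref{prop: local stable foliation} near strictly stable sheets, but also via Proposition~\ref{marginally stable foliation} (Galloway's construction) near marginally stable sheets; for the latter the expansion along the foliation need not be strictly monotone, and foliations can degenerate to a single sheet, so $u$ can be locally constant on an open set and there is simply no sign change to read off. Corollary~\ref{cor: local extrema} rules out strict local extrema of $u$ away from maximal domains, but that is not the same as strict monotonicity transverse to the sheets. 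Your fallback ``compare $\hat\Si$ directly with the sheets of $\Phi_n$ via the strong maximum principle'' is the right instinct, but to carry it out you need a reason why $\hat\Si$ cannot cross, and must be disjoint from or equal to, each sheet $\Si_{r_\ell}$ -- and that reason is exactly the avoidance property of Lemma~\ref{avoidance property}, which your proposal never invokes in this step.

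The paper closes the loop differently: since the subsequential limit $\Si_p$ (your $\hat\Si$) also arises from a translated-sequence limit along the same diagonal family, Lemma~\ref{avoidance property} forces it to avoid every $U_{r_k}$ and every $\Si_{r_\ell}$ in the dense collection. If $x_0$ is in the open interior of a foliation, the avoidance property plus the local uniqueness clause of Proposition~\ref{prop: local stable foliation}(3) (or Proposition~\ref{marginally stable foliation}(2) in the marginal case) forces $\hat\Si$ to be a sheet of $\Phi_n$, and a foliation has exactly one sheet through a given point. If instead $x_0\in\pa U_m$ or $x_0\in\pa\Om$, avoidance puts $\hat\Si$ on one side of $\pa U_m$ (resp.\ $\pa\Om$) with a contact at $x_0$, Theorem~\ref{thm: local convergence}(2) shows the unit normals agree there, and the strong maximum principle for the constant expansion equation gives $\hat\Si=\pa U_m$ (resp.\ a component of $\pa\Om$). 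Either way the limit is subsequence-independent. If you replace your level-set monotonicity argument with this avoidance-plus-local-uniqueness argument, your proof becomes correct and essentially coincides with the paper's.
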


\begin{proof}
We will check that the subsequence $s_j$ obtained by diagonal argument satisfies the claim. Suppose $p\in U_{r_k}$ for $k\in A$, then it follows from the argument in the proof of Lemma \ref{avoidance property} (ii) that $\tilde{f}_{s_j}^{(p)}$ converges to $\tilde{f}_0^{(r_k)} + C$ in $U_{r_k}$ for some constant $C$.

Suppose $p\in \bar{\Om} - \bigcup_{m = 1}^{N_1} U_{m}$. By passing to a further subsequence $s_{j'}$,  $\gr(\tilde{f}_{s_{j'}}^{(p)})$ converges to either a graph on maximal domain $U_p$ or a cylinder over a closed smooth CES $\Si_p$, either of which satisfies avoidance property together with $\{U_{r_k}, \Si_{r_\ell}\}$. Note $(\bigcup_{k\in A} U_{r_k})\cup\,( \bigcup_{\ell\in B} \Si_{r_\ell})$ is a dense subset of $\bar{\Om}$. Thus, the limit submanifold of $\gr(\tilde{f}_{s_{j'}}^{(p)})$ must be a cylinder over a closed smooth CES $\Si_p$ containing $p$. Then we may encounter two scenarios: $p\in \pa (\bar{\Om} - \bigcup_{m = 1}^{N_1} U_{m})$ and $p\in \mathrm{Int}(\bar{\Om} - \bigcup_{m = 1}^{N_1} U_{m})$. In the first scenario, either $p\in \pa U_{m}$ for some $m$ or $p\in \pa \Om$. By avoidance property, $\Si_p$ lies outside of $U_{m}$ but inside of $\bar{\Om}$ and contacts $\pa U_m$ or $\pa \Om$ respectively at a point $p$. It follows from Theorem \ref{thm: local convergence} that at the point $p$ the unit normal vectors of $\Si_p$ and $\pa U_m$ or $\pa \Om$ respectively are identical. Since $\Si_p$ and $\pa U_m$ or $\pa \Om$ respectively both satisfy $\rH-\rK \equiv u(p)$ with a contact point, strong maximum principle implies that $\Si_p$ is a connected component of $\pa U_{m}$ or $\pa Om$ respectively. In the second scenario, $p$ lies in the interior of one foliation of CES $\Phi_n\big( (0, b_n)\times \Si_n)$ in Theorem \ref{thm: structure theorem}. The avoidance property and local uniqueness of foliation around stable CES imply that $\Si_p$ is a sheet of the foliation $\Phi_n$. In both scenarios, we found that $\Si_p$ is uniquely determined. Thus, we can drop the dependence of the choice of subsequence and the convergence holds true for the original sequence $s_j$.
\end{proof}

The pair $(u,\eta)$ of capillary blowdown limit and its companion vector field preserve the geometric information of regularized solutions when the blowup occurs.
\begin{cor}\label{cor: companion vector field}
Assume that any compact subset of $M$ contains only finitely many marginally stable CES. Let $u$ be a capillary blowdown limit of regularized solutions. Then there exists a continuous, piecewise smooth vector field $\eta$ on $M$ satisfying the following properties.
\begin{enumerate}
    \item[(1)] $|\eta(x)| \leq 1$ for all $x\in M$.
    \item[(2)] If $x$ lies in a maximal domain $U$ of a solution $f$ to constant expansion equation in the Structure Theorem \ref{thm: structure theorem} or $\Om_0$ associated with Jang equation, then $\eta$ is the horizontal projection of Gauss map on $\gr(f)$:
\begin{align*}
\eta(x) = \frac{\D f(x)}{\sqrt{1+|\D f(x)|^2}}.
\end{align*}
    \item[(3)] If $x$ lies in a foliation of CES in the Structure Theorem \ref{thm: structure theorem}, then $\eta(x)$ is the unit normal to the CES which contains $x$.
    \item[(4)] The pair $(u,\eta)$ satisfies the equation
    \begin{align}\label{Eq: companion equation}
        \Div_M (\eta) - \tr_g k + k(\eta,\eta) = u \quad \mbox{in $M$}.
    \end{align}
\end{enumerate}
\end{cor}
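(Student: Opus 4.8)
The plan is to use the structure theorem (Theorem~\ref{thm: structure theorem}) inside each blowup region $\Om$, the description of $\Om_0$ from Proposition~\ref{prop: SY Jang equation}, and the local convergence theorem (Theorem~\ref{thm: local convergence}) to define $\eta$ piecewise, then to verify continuity across the pieces using Corollary~\ref{cor: everywhere convergence subsequence}. First I would fix the subsequence $s_j$ produced by Corollary~\ref{cor: everywhere convergence subsequence}, so that $\gr(\tilde f_{s_j}^{(x_0)})$ converges smoothly near $(x_0,0)$ for every $x_0\in M$ (the argument works verbatim on each component of $\Om_\pm$, and on $\Om_0$ the sequence already converges smoothly by Proposition~\ref{prop: SY Jang equation}). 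Then I would define
\[
\eta(x) := \lim_{j\to\infty}\frac{\D f_{s_j}(x)}{\sqrt{1+|\D f_{s_j}(x)|^2}},
\]
noting that $\D f_{s_j}(x)=\D\tilde f_{s_j}^{(x_0)}(x)$ is independent of the choice of reference point, so this limit is exactly the horizontal component of the downward unit normal of the limiting submanifold through $(x,0)$, hence well-defined and of length $\le 1$. This immediately gives property~(1). On $\Om_0$ and on each maximal domain $U_m$ the limiting submanifold is $\gr(f)$ for $f=f_0$ or $f=\tilde f$, so $\eta(x)=\D f(x)/\sqrt{1+|\D f(x)|^2}$, giving (2); on a sheet of a foliation of CES the limiting submanifold is a cylinder $\Si_x\times\R$ and Theorem~\ref{thm: local convergence}(2) identifies $\eta(x)$ with the unit normal of $\Si_x$, giving (3).

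Next I would establish that $\eta$ is continuous and piecewise smooth. On the interior of each maximal domain $U_m$ (and on $\Om_0$) it is smooth because $f$ is smooth with $|\D f|<\infty$; on a foliation region $\Phi_n((0,b_n)\times\Si_n)$ it is smooth because the foliation is smooth and the normal field depends smoothly on the sheet. For continuity across a boundary CES $\Si=\pa U_m\cap(\text{foliation region})$ or a component of $\pa\Om_\pm$, I would use that on approach to such a $\Si$ from the graphical side the graphs $\gr(\tilde f_{s_j}-a)$ converge to the cylinder $\Si\times\R$ as $a\to\pm\infty$ (the last clause of Proposition~\ref{prop: SY Jang equation}(2), resp.\ the $C^{2,\al}_{loc}$ convergence in Theorem~\ref{thm:interior of level set}), so the horizontal Gauss map converges to the unit normal of $\Si$, which matches the value of $\eta$ coming from the foliation side by Corollary~\ref{cor: everywhere convergence subsequence}. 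Across $\pa\Om_0$ one also matches with the cylinder limits. Finally, across the transition between two adjacent foliation regions sharing an isolated CES, both sides give the unit normal of that CES, so there is nothing to check. This patches $\eta$ into a globally continuous, piecewise-smooth field; note that outside $\bar\Om_+\cup\bar\Om_-$ we are simply on $\Om_0$ and $\eta$ is smooth there.

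For property~(4) I would argue equation-by-equation on each piece, then invoke continuity to conclude the identity holds in the sense of distributions (or classically away from the gluing loci, which form a measure-zero union of smooth surfaces). On $\Om_0$, $\eta=\D f_0/\sqrt{1+|\D f_0|^2}$ and the Jang equation $\rH(f_0)-\rK(f_0)=0$ rewrites exactly as $\Div_M(\eta)-\tr_g k+k(\eta,\eta)=0=u$; on a maximal domain $U_m$ the constant expansion equation $\rH(\tilde f)-\rK(\tilde f)=u(U_m)$ rewrites identically as $\Div_M(\eta)-\tr_g k+k(\eta,\eta)=u(U_m)$; on a sheet of a foliation of CES through $x$, $\eta$ is the unit normal of the level-set CES $\Si_x$ and $\Div_M(\eta)-\tr_g k+k(\eta,\eta)=\rH_{\Si_x}-\rK_{\Si_x}=u(\Si_x)=u(x)$, which is precisely equation~(\ref{Eq:ConstExpLevel}) along the foliation. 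Since $\eta$ is continuous and the pieces tile $M$, the identity~(\ref{Eq: companion equation}) holds weakly on all of $M$.

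The main obstacle I expect is the continuity of $\eta$ at the gluing surfaces, i.e.\ at $\pa U_m$, $\pa\Om_0$, and at isolated CES between foliation pieces: one must verify that the horizontal Gauss map of the divergent graphs $\gr(\tilde f)$ really limits onto the unit normal of the boundary CES \emph{with the correct orientation}. This is exactly what the cylinder-convergence statements (Proposition~\ref{prop: SY Jang equation}(2), Theorem~\ref{thm:interior of level set}, and the orientation-compatibility clauses of Theorem~\ref{thm: local convergence}) and the diagonal-subsequence stability in Corollary~\ref{cor: everywhere convergence subsequence} are designed to supply, so the work is mainly in assembling these ingredients carefully rather than proving anything new; but the bookkeeping of which unit normal each piece carries, and checking all sign conventions agree, is where the care is needed.
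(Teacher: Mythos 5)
Your proposal is correct and follows essentially the same route as the paper: fix the diagonal subsequence from Corollary~\ref{cor: everywhere convergence subsequence}, define $\eta$ as the pointwise limit of the horizontal Gauss map $\D f_{s_j}/\sqrt{1+|\D f_{s_j}|^2}$, and read off properties (1)--(4) from Theorem~\ref{thm: local convergence}, Proposition~\ref{prop: SY Jang equation}, and the piecewise form of the constant expansion equation. The paper's write-up is much terser (it simply asserts continuity and that (4) ``records the fact'' that each piece satisfies $\theta=u$), whereas you spell out the boundary matching and the weak/distributional sense in which the global identity holds; those details are consistent with, and fill in, what the paper leaves implicit.
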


\begin{proof}
We know from Proposition \ref{prop: SY Jang equation} that both $\Om_+$ and $\Om_-$ have only finitely many connected components (blowup regions). Applying Corollary \ref{cor: everywhere convergence subsequence} to all blowup regions, there exists a decreasing subsequence $s_{j'} \to 0$ such that
\begin{align*}
    \eta(x):= \lim_{j' \to \infty} \frac{\D f_{s_{j'}}(x)}{\sqrt{1+ |\D f_{s_{j'}}(x)|^2}} \quad \text{exists for all $x\in \bar{\Om}_+\cup \bar{\Om}_-$.}
\end{align*}
Proposition \ref{prop: SY Jang equation} implies that the above limit $\eta(x)$ also exists for $x\in \Om_0$ with the same subsequence $s_{j'}$ and 
\begin{equation}
	\eta(x) = \frac{\D f_0(x)}{\sqrt{1+ |\D f_0(x)|^2}}
\end{equation} where $f_0$ is the solution to Jang equation in Proposition \ref{prop: SY Jang equation}. Claim (1) is clear. Claim (2) and claim (3) follow from Proposition \ref{thm: local convergence}. Therefore, $\eta$ is continuous everywhere, and smooth except across boundaries of maximal domains and $\pa \Om_0$. Claim (4) records the fact that the solution $f$ in (2) and CES in (3) satisfy constant expansion equation $\th = u$.
\end{proof}

\begin{cor}[Volume estimate for blowup region] Let 
\begin{equation*}
    I = I(M,g) = \inf \frac{|\pa R|^{\frac32}}{|R|}
\end{equation*}
be the isoperimetric constant of $(M,g)$ where $R$ is any bounded domain whose boundary is nice enough to define area. Suppose $\Om$ is a component of $\Om_-$ or $\Om_+$. Then we have the volume estimate for $\Om$:
\begin{equation}
    |\Om| \geq I^2 \|k\|_{0;\Om}^{-3}
\end{equation}
and the area estimate for $\pa \Om$:
\begin{equation}
    |\pa\Om| \geq I^2 \|k\|_{0;\Om}^{-2}.
\end{equation}
\end{cor}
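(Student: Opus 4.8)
The plan is to combine the isoperimetric inequality for the bounded region $\Om$ with the elementary bound $|\pa\Om|\le C\,\|k\|_{0;\Om}\,|\Om|$, where $C$ is an explicit dimensional constant ($C=3$ if $\|\cdot\|_{0;\Om}$ denotes $\sup_{\bar\Om}$ of the operator norm), obtained by integrating the regularized expansion identity over $\Om$. Recall that $\bar\Om$ is compact and $\pa\Om$ is a union of closed smooth apparent horizons, so the isoperimetric ratio of $R=\Om$ is defined and at least $I=I(M,g)$, i.e. $|\pa\Om|^{3/2}\ge I\,|\Om|$.

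To establish the bound on $|\pa\Om|$, first I would fix $j$ large and consider the smooth horizontal vector field $X_j:=\D f_{s_j}/\sqrt{1+|\D f_{s_j}|^2}$ on $\bar\Om$. It satisfies $|X_j|<1$ and, using $\Div_M X_j=\rH(f_{s_j})$ together with the regularized equation \eqref{eq: regularized eq} (so that $\rH(f_{s_j})-\rK(f_{s_j})=u_{s_j}$),
\[
  \Div_M X_j=\rK(f_{s_j})+u_{s_j}=\tr_{\gr(f_{s_j})}k+u_{s_j}.
\]
Since the inverse of the induced metric on $\gr(f_{s_j})$ is a positive endomorphism of trace $2+(1+|\D f_{s_j}|^2)^{-1}\le 3$, we get $|\rK(f_{s_j})|\le 3\|k\|_{0;\Om}$, hence $\Div_M X_j\ge u_{s_j}-3\|k\|_{0;\Om}$ on $\Om$. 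Then I would identify the boundary term: by Proposition \ref{prop: SY Jang equation} the graphs $\gr(f_{s_j})$ converge smoothly near $\pa\Om$ to the vertical cylinder $\pa\Om\times\R$, so their downward unit normals become horizontal and $X_j\to\nu$ uniformly on $\pa\Om$, where $\nu$ points \emph{into} $\Om$ when $\Om\subset\Om_+$ (there $f_{s_j}\to+\infty$) and \emph{out of} $\Om$ when $\Om\subset\Om_-$. For $\Om\subset\Om_+$ the divergence theorem gives $\int_\Om\Div_M X_j\,dV=\int_{\pa\Om}\langle X_j,\nu_{\mathrm{out}}\rangle\,dA$; letting $j\to\infty$ and using $\langle X_j,\nu_{\mathrm{out}}\rangle\to-1$, $u_{s_j}\to u$ and $u\ge0$ on $\Om_+$, one obtains $-|\pa\Om|\ge -3\|k\|_{0;\Om}|\Om|$, i.e. $|\pa\Om|\le 3\|k\|_{0;\Om}|\Om|$. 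The case $\Om\subset\Om_-$ is identical with all signs reversed ($u\le0$ there), giving the same inequality. When the finiteness hypothesis of the Structure Theorem holds one may equivalently run this with the companion vector field $\eta$ of Corollary \ref{cor: companion vector field}, which on $\pa\Om$ equals this same unit normal and satisfies $\Div_M\eta=u+\tr_g k-k(\eta,\eta)$.

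Finally I would feed $|\pa\Om|\le 3\|k\|_{0;\Om}|\Om|$, i.e. $|\Om|\ge |\pa\Om|/(3\|k\|_{0;\Om})$, into $|\pa\Om|^{3/2}\ge I|\Om|$, obtaining $|\pa\Om|^{1/2}\ge I/(3\|k\|_{0;\Om})$, hence $|\pa\Om|\gtrsim I^2\|k\|_{0;\Om}^{-2}$, and substituting this back $|\Om|\gtrsim I^2\|k\|_{0;\Om}^{-3}$ — the asserted estimates, up to the constant $C$ (which can be absorbed into the chosen convention for $\|k\|_{0;\Om}$).

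The step I expect to be the main obstacle is the boundary term: justifying that $X_j$ — equivalently the downward Gauss map of $\gr(f_{s_j})$ — converges to the inward (resp. outward) unit normal along \emph{all} of $\pa\Om$, including components of $\pa\Om$ shared with $\Om_0$ rather than with $\Om_\mp$, where the limiting hypersurface $S$ of Proposition \ref{prop: SY Jang equation} is a graph that only becomes vertical ``at infinity.'' Everything else is routine, apart from reconciling the precise numerical constant with the choice of tensor norm for $k$.
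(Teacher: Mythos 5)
Your plan is essentially the paper's proof: integrate a divergence identity for the (limit of the) horizontal Gauss map over $\Om$, identify the boundary flux with $-|\pa\Om|$, bound the bulk term by $C\|k\|_{0;\Om}|\Om|$ using $u\ge 0$ (resp.\ $u\le 0$) on $\Om_+$ (resp.\ $\Om_-$), and feed this into the isoperimetric inequality. The only real difference is that you run the argument at finite $j$ with $X_j=\D f_{s_j}/\sqrt{1+|\D f_{s_j}|^2}$ and try to pass to the limit, whereas the paper integrates equation \eqref{Eq: companion equation} directly, i.e.\ works with the companion vector field $\eta$ already constructed in Corollary \ref{cor: companion vector field}. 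The obstacle you flag — showing $X_j\to -\nu_{\mathrm{out}}$ along \emph{all} of $\pa\Om$, including components bordering $\Om_0$ — is exactly what Corollary \ref{cor: everywhere convergence subsequence} and Corollary \ref{cor: companion vector field} settle: the pointwise limit $\eta$ of $X_j$ exists on all of $\bar\Om$, equals the CES unit normal on $\pa\Om$ chosen as in Theorem \ref{thm: local convergence}(2), and this normal necessarily points into $\Om$ for $\Om\subset\Om_+$ because $f_{s_j}\to+\infty$ on the $\Om$-side; this holds regardless of whether the other side is $\Om_0$ or $\Om_-$. Proposition \ref{prop: SY Jang equation}(3) alone indeed does not give you this (cylinders in $S$ only sit over $\pa\Om_+\cap\pa\Om_-$), so the right citation at that step is Corollary \ref{cor: companion vector field}, not Proposition \ref{prop: SY Jang equation} — and with it your limiting argument and the paper's integration of \eqref{Eq: companion equation} become the same computation. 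Two minor remarks: (i) like Corollary \ref{cor: companion vector field}, this implicitly carries the finiteness hypothesis on marginally stable CES from Theorem \ref{thm: structure theorem}; (ii) your constant $3$ from $|\rK(f_{s_j})|\le 3\|k\|$ is correct, and the paper simply absorbs such dimensional constants into the chosen convention for $\|k\|_{0;\Om}$, consistent with your closing caveat.
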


\begin{proof}
Suppose $\Om\subset \Om_+$ is a connected component. Integrating (\ref{Eq: companion equation}) over $\Om$ and use divergence theorem,
\begin{equation*}
     |\pa \Om| + \int_\Om u = -\int_{\pa \Om}  \lan \eta, \nu \ran +\int_\Om u = -\int_\Om \big\{\tr k - k(\eta, \eta)\big\}.
\end{equation*}
Since $u\geq 0$, by isoperimetric inequality we have
\begin{equation*}
    I^{\frac23}|\Om|^{\frac23} \leq|\pa \Om| \leq |\Om|\|k\|_{0;\Om}.
\end{equation*}
Thus,
\begin{equation*}
     |\Om| \geq I^2 \|k\|_{0;\Om}^{-3}
\end{equation*}
and
\begin{equation}
    |\pa\Om| \geq I^2 \|k\|_{0;\Om}^{-2}.
\end{equation}
If $\Om$ is a connected component of $\Om_-$, then we have
\begin{equation*}
-|\pa \Om| + \int_\Om u = -\int_\Om \big\{\tr k - k(\eta, \eta)\big\}
\end{equation*}
where $u \leq 0$ in $\Om$. Thus, we conclude the same result as for $\Om_+$.
\end{proof}

\section{Trival capillary blowdown limit}
\noindent This section is contributed to the discussion of a very special blowup phenomenon. Given a blowup sequence of regularized solutions, if the speed of caps escaping to infinity is much slower than the contractive rescaling factor $s$, then it is likely that the rescaled sequence ends up with the trivial capillary blowdown limit which is identically zero. There is no obvious evidence excluding this possibility. Nevertheless, trivial capillary blowdown limit is rigid and gives a topological restriction on blowup regions.
\subsection{Rigidity of trivial capillary blowdown limit} In general, uniqueness of capillary blowdown limits on a given blowup region is not clear. Whereas trivial capillary blowdown limit has the following rigidity property. 
\begin{prop}\label{rigidity theorem}
If there exists a sequence $s_j\to 0^+$ such that 
\begin{align*}
    \lim_{j\to \infty} \sup_{x\in M} |u_{s_j}(x)| = 0,
\end{align*}
then
\begin{align*}
    \lim_{s\to 0^+} \sup_{x\in M} |u_{s}(x)| = 0.
\end{align*}
\end{prop}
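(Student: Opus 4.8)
The plan is to argue by contradiction, reducing the statement to a uniqueness property — comparison for compactly supported viscosity solutions — of the geometric equation (\ref{Eq:ConstExpLevel2}). Suppose the conclusion fails. Then there are $\delta>0$ and a sequence $t_k\to 0^+$ with $\sup_M|u_{t_k}|\geq\delta$ for every $k$. Using the universal bounds $|u_{t_k}|\leq\mu_1$, $|\D u_{t_k}|\leq\mu_2$ together with Arzela-Ascoli, and Proposition \ref{prop: SY Jang equation} applied to $\{f_{t_k}\}$, I would pass to a subsequence along which $f_{t_k}$ determines a decomposition $\Om_+',\Om_-',\Om_0'$ and $u_{t_k}\to v$ in $C^0_{loc}(M)$, so that $v$ is a capillary blowdown limit of $f_{t_k}$. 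The first point to settle is that $v\not\equiv 0$: the blowup of $f_{t_k}$ is confined to the fixed compact set $\bar\Om_+'\cup\bar\Om_-'$, while on $\Om_0'$ the $f_{t_k}$ converge locally uniformly to a finite Jang solution and the capillary terms $u_{t_k}=t_k f_{t_k}$ are uniformly small near the asymptotically flat ends; hence the points $x_k$ with $|u_{t_k}(x_k)|\geq\delta$ lie in a fixed compact set once $k$ is large, and after a further subsequence $x_k\to x_*$ with $|v(x_*)|\geq\delta>0$.

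Next I would invoke Theorem \ref{thm:viscosity solution}: $v$ is a viscosity solution of (\ref{Eq:ConstExpLevel2}) on all of $M$, and so is the constant function $0$, which is the uniform blowdown limit of the hypothesised sequence $\{f_{s_j}\}$ — and which is in any case directly a viscosity solution, since $\cG(x,0,A)=-\tr_g A$ has the sign demanded by the degenerate alternative ($A$ being nonnegative, resp. nonpositive, in the closed superjet, resp. subjet, of the zero function, so one may take $\xi=0$). Moreover $0$ is everywhere zero and $v$ is supported in the compact set $\bar\Om_+'\cup\bar\Om_-'$ (Proposition \ref{prop: SY Jang equation}), since each vanishes on the region where the regularized solutions stay bounded. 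Fix a bounded open set $W\subset M$ containing $\bar\Om_+'\cup\bar\Om_-'$; then $v\equiv 0$ on $\pa W$, and on $W$ each of $v$ and $0$ is simultaneously a viscosity subsolution and supersolution of (\ref{Eq:ConstExpLevel2}). Applying the comparison principle for this singular degenerate elliptic geometric equation — an Evans--Spruck type result \cite{ES}, in the manifold formulation of \cite{AFS} — once with $v$ as subsolution and $0$ as supersolution, and once with the roles reversed, gives $v\equiv 0$ on $W$, hence on $M$. This contradicts $v(x_*)\neq 0$ and proves the proposition.

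The crux of this argument is the comparison principle for (\ref{Eq:ConstExpLevel2}). The operator $\cF$ is genuinely singular along $\{\D u=0\}$ — which is precisely why viscosity sub/supersolutions are defined via the two-alternative condition — and it depends on $x$ through $k$; one must therefore run the Crandall--Ishii--Lions doubling-of-variables argument in this singular setting, checking that the first-order term $|\zeta|\big(\tr_g k-k(\zeta/|\zeta|,\zeta/|\zeta|)\big)$ is a bounded perturbation that is Lipschitz in the direction of $\zeta$, and that the zeroth-order term $|\zeta|\,u$ enters with the monotone (``proper'') sign, so that the comparison theorems of \cite{ES} and \cite{AFS} apply with only cosmetic changes. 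A more routine, but still necessary, point is the uniform confinement of the near-maximal points $x_k$ to a fixed compact set, which rests on the uniform-in-$s$ decay of $f_s$ at the ends together with the a priori estimates of Proposition \ref{Prop: LocalEst}.
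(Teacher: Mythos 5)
Your proof takes a genuinely different route from the paper's, but there is a real gap at exactly the point you flag as the crux. The paper's own argument is entirely elementary: by applying the maximum principle to $f_t - f_s$ at its extremum (Lemma \ref{GapEst}), one gets the gap estimate $\sup_M(f_t-f_s)\leq\frac{s-t}{st}\min\{\max u_s,\max u_t\}$, from which Corollary \ref{Monotonicity of tips} deduces that $\sup_M|u_s|$ is monotone nondecreasing in $s$. The proposition is then immediate: a monotone function whose $\liminf$ is $0$ along some sequence $s_j\to 0^+$ must tend to $0$. No viscosity theory is involved.

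The gap in your argument is the comparison principle for the singular degenerate elliptic equation (\ref{Eq:ConstExpLevel2}). This is not a cosmetic variant of Evans--Spruck or of the manifold results in \cite{AFS}. Evans--Spruck establish comparison for the \emph{parabolic} level-set equation, where the $u_t$ term supplies the strict monotonicity that the doubling-of-variables argument requires; here the zeroth-order term is $|\D u|\,u$, which vanishes wherever $\D u = 0$, so $\cF$ is only weakly proper in $r$. The standard Crandall--Ishii--Lions machinery then does not close without additional structure, and the elliptic level-set equation is notorious for non-uniqueness ("fattening"). If the Dirichlet comparison you invoke were available, it would force uniqueness of the capillary blowdown limit on a fixed blowup region (any two blowdown limits agree on $\pa W\subset\Om_0$), but the paper states explicitly in Section 6.1 that "uniqueness of capillary blowdown limits on a given blowup region is not clear"; indeed Theorem \ref{thm: comparison theorem} is only a \emph{one-sided} inequality $u\leq v$, proved via the geometry of level sets (Corollary \ref{cor:CES in level set}) and the strong maximum principle on the boundary sheet, precisely because no two-sided viscosity comparison is on hand. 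The remaining steps of your argument (compact support of $v$, non-triviality of $v$, and that $0$ is a viscosity solution via the degenerate alternative with $\xi = 0$) are fine, but they hang on the missing comparison theorem, which is not a routine verification and would be a substantial result in its own right.
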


The proof is based on the monotonicity property of $\max_M |u_s|$. To show this, we need the following gap estimate.
\begin{lem}[Estimate of gap]\label{GapEst}
Suppose $0< t < s$ and suppose $f_s$, $f_t$ are solutions to (\ref{eq: regularized eq}) and converge to 0 at each infinity end. Denote $u_s = sf_s$ and $u_t = tf_t$.
\begin{enumerate}
    \item[(1)] If $\min \{\max_M u_s, \max_M u_t\}>0$, then $\sup_M (f_t-f_s) \leq \frac{s-t}{st}\min \{\max_M u_s, \max_M u_t\}$. 
    \item[(2)] If $\max\{\min_M u_s, \min_M u_t\}<0$, then $\frac{s-t}{st} \max\{\min_M u_s, \min_M u_t\}< \inf_M (f_t-f_s)(x)$. 
\end{enumerate}
\end{lem}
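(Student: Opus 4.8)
The plan is a comparison-at-an-interior-extremum argument for the difference $w := f_t - f_s$, exploiting that the operator $Q(f) := \rH(f)-\rK(f)$ appearing on the left of (\ref{eq: regularized eq}) depends only on $\D f$ and $\D^2 f$, not on $f$ itself. The first step is to record what this structural fact gives at a point $x_0$ where $\D f_t(x_0)=\D f_s(x_0)=:p$: there the coefficient matrix $a^{ij}(p):=g^{ij}-p^ip^j/(1+|p|^2)$ is positive definite and, together with the positive scalar $\lambda(p):=(1+|p|^2)^{-1/2}$, is the same for $f_s$ and for $f_t$, so subtracting the two copies of the equation yields
\[
    t\,f_t(x_0)-s\,f_s(x_0)=Q(f_t)(x_0)-Q(f_s)(x_0)=\lambda(p)\,a^{ij}(p)\,\D_i\D_j w(x_0).
\]

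For part (1) I would argue as follows. If $\sup_M w\le 0$ there is nothing to prove, since the right-hand side of the claim is positive (recall $s>t>0$ and $\min\{\max_M u_s,\max_M u_t\}>0$); so suppose $\sup_M w>0$. Because $f_s$ and $f_t$ tend to $0$ at every infinity end, $w$ does too, so the supremum is attained at an interior point $x_0$, where $\D w(x_0)=0$ (justifying the matching-gradient hypothesis) and $\D^2 w(x_0)\le 0$ as a symmetric bilinear form, hence $a^{ij}\D_i\D_j w(x_0)\le 0$ and therefore $t f_t(x_0)\le s f_s(x_0)$. From this single scalar inequality, rewriting $s f_s(x_0)-t f_t(x_0)\ge 0$ in two ways gives $t\,w(x_0)\le(s-t)f_s(x_0)$ and $s\,w(x_0)\le(s-t)f_t(x_0)$; since $w(x_0)>0$ and $s-t>0$ these force $f_s(x_0),f_t(x_0)>0$, whence $f_s(x_0)\le\max_M u_s/s$, $f_t(x_0)\le\max_M u_t/t$, and finally $\sup_M w=w(x_0)\le\tfrac{s-t}{st}\min\{\max_M u_s,\max_M u_t\}$. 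Part (2) is the mirror image: assuming $\inf_M w<0$ (otherwise the negative right-hand side makes the claim trivial), at an interior minimum $x_0$ one has $\D^2 w(x_0)\ge 0$, hence $t f_t(x_0)\ge s f_s(x_0)$, and running the same algebra with reversed inequalities gives $\inf_M w\ge\tfrac{s-t}{st}\max\{\min_M u_s,\min_M u_t\}$; the strict inequality then follows from the strong maximum principle applied to $w$, which solves a linear elliptic equation $Lw-t\,w=-(s-t)f_s$ with $L$ a second-order elliptic operator with no zeroth-order term (obtained by integrating the derivative of $Q$ along the segment joining $f_s$ to $f_t$), the coefficient $-t<0$ having the sign favorable for comparison — and in any event the non-strict bound already suffices for the intended application to monotonicity of $\max_M|u_s|$.

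The only point demanding care is the assertion that the relevant extremum of $w$ is attained at an interior point of $M$; this rests on the normalization $f_s,f_t\to 0$ at the infinity ends built into (\ref{eq: regularized eq}) and is where asymptotic flatness enters. The other structural ingredient — and the reason the two equations can be compared with no loss — is that $Q$ carries no zeroth-order dependence on $f$, so that its principal and first-order coefficients at the contact point $x_0$ coincide exactly for $f_s$ and $f_t$, leaving no error to absorb. I do not expect either point to be a serious obstacle: everything past the displayed identity is elementary manipulation of the two relations between $t f_t(x_0)$ and $s f_s(x_0)$.
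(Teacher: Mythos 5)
Your argument is essentially the paper's: pass to an interior extremum of $w=f_t-f_s$ (available because both regularized solutions decay at infinity), subtract the two regularized equations there so that the second-order term has a definite sign and the coefficients match by $\D w(x_0)=0$, and then rearrange $tf_t(x_0)\le sf_s(x_0)$ in the same two ways to obtain both bounds. One small remark: the paper proves (1) with $\le$ and says (2) ``follows analogously,'' which delivers only the non-strict inequality; your strong-maximum-principle sketch for upgrading to $<$ is not actually carried out, but as you correctly observe the non-strict bound is all that Corollary~\ref{Monotonicity of tips} requires.
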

\begin{proof}
We may assume $\sup_M (f_t-f_s)>0$; otherwise, there is nothing to prove. Since $f_t-f_s$ is smooth and decays to zero near infinity, there is $x_0 \in M$ such that $(f_t-f_s)(x_0) = \max_M (f_t-f_s)$. By derivative test, we have
\begin{align*}
    \D (f_t-f_s)(x_0) = 0,\quad \D^2(f_t-f_s)\leq 0.
\end{align*}
By subtracting regularized equations (\ref{eq: regularized eq}) associated with $s$ and $t$, we obtain
\begin{align}\label{ineq: GapEst 1}
    0&\geq \Big(g^{ij}- \frac{f_s^if_s^j}{1+|\D f_s|^2}\Big)\frac{\D_i\D_j (f_t-f_s)}{\sqrt{1+|\D f_s|^2}}(x_0) = tf_t(x_0)- sf_s(x_0).
\end{align}
There are two ways to split the difference. Firstly, we have
\begin{align*}
	0 &\geq  tf_t(x_0)- sf_s(x_0)\\
	&= t f_t(x_0) - t f_s(x_0) + t f_s(x_0) - s f_s(x_0)\\
	&=t \big(f_t(x_0) - f_s(x_0)\big) + (t - s) f_s(x_0).
\end{align*}
Thus,
\begin{align}\label{ineq: GapEst 2}
    f_t(x_0) - f_s(x_0) \leq \frac{s-t}{t}f_s(x_0) \leq \frac{s-t}{t}\max f_s = \frac{s-t}{st}\max u_s.
\end{align}
Secondly, we have
\begin{align*}
	0 &\geq  tf_t(x_0)- sf_s(x_0)\\
	&= t f_t(x_0) - s f_t(x_0) + s f_t(x_0) - s f_s(x_0)\\
	&=(t - s) f_t(x_0) + s \big(f_t(x_0) - f_s(x_0)\big).
\end{align*}
Thus,
\begin{align}\label{ineq: GapEst 3}
    f_t(x_0) - f_s(x_0) \leq \frac{s-t}{s}f_t(x_0)\leq \frac{s-t}{s}\max f_t = \frac{s-t}{st}\max u_t.
\end{align}
Therefore, $f_t(x) - f_s(x) \leq f_t(x_0) - f_s(x_0) \leq \frac{s-t}{st}\min\{\max u_s, \max u_t\}$.

The result (2) follows analogously.
\end{proof}

\begin{cor}\label{Monotonicity of tips}
Suppose $0< t < s$ and suppose $f_s$, $f_t$ are solutions to (\ref{eq: regularized eq}) and converge to 0 at each infinity end. Denote $u_s = sf_s$ and $u_t = tf_t$. Then
\begin{enumerate}
    \item[(1)] If $\max_M u_s > 0$, then $\max_M u_t \leq \max_M u_s$.
    \item[(2)] If $\min_M u_s < 0$, then $\min_M u_s \leq \min_M u_t$.
\end{enumerate}
\end{cor}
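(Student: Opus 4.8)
The plan is to deduce both statements directly from Lemma~\ref{GapEst}; I describe statement (1), as (2) follows from the verbatim symmetric argument applied to minima. First I would dispose of the trivial case: if $\max_M u_t \leq 0$, then by the hypothesis $\max_M u_s > 0$ we get $\max_M u_t \leq 0 < \max_M u_s$ and there is nothing to prove. Hence I may assume $\max_M u_t > 0$, so that $\min\{\max_M u_s, \max_M u_t\} > 0$, which is precisely the hypothesis needed to invoke Lemma~\ref{GapEst}(1).

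Next I would pick a point $x_1 \in M$ at which $f_t$ attains its maximum (equivalently $u_t$, since $u_t = t f_t$ with $t>0$); such a point exists because $f_t$ is smooth and decays to $0$ at each infinity end. Using $f_s(x_1) \leq \max_M f_s$, I have
\[
\max_M f_t - \max_M f_s \leq f_t(x_1) - f_s(x_1) \leq \sup_M (f_t - f_s) \leq \frac{s-t}{st}\min\{\max_M u_s, \max_M u_t\} \leq \frac{s-t}{st}\max_M u_s,
\]
the third inequality being Lemma~\ref{GapEst}(1). Substituting $\max_M f_t = \tfrac1t \max_M u_t$ and $\max_M f_s = \tfrac1s \max_M u_s$ and clearing denominators ($st>0$) yields $s\max_M u_t \leq (s-t)\max_M u_s + t\max_M u_s = s\max_M u_s$, i.e. $\max_M u_t \leq \max_M u_s$. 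For (2) one repeats this with a minimum point of $f_t$ and the bound from Lemma~\ref{GapEst}(2).

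There is no substantive obstacle here: the corollary is a short bookkeeping consequence of the gap estimate. The only points requiring a little care are the initial case split that guarantees the sign hypothesis of Lemma~\ref{GapEst} is actually satisfied before applying it, and keeping the direction of the inequalities straight when converting between the $f$'s and the rescaled quantities $u = sf$.
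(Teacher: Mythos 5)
Your proof is correct and follows essentially the same route as the paper's: pick a point where $u_t$ (equivalently $f_t$) is maximized, invoke Lemma~\ref{GapEst}(1) to bound $f_t - f_s$ there, use $f_s(\bar x)\le\max_M f_s$, and rearrange; the initial case-split to secure $\max_M u_t>0$ before invoking the lemma also mirrors the paper. The only difference is cosmetic bookkeeping (you clear denominators at the end, while the paper keeps things scaled by $t$).
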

\begin{proof}
Suppose $u_t$ achieves its maximum at $\bar{x}$. We may also assume that $\max_M u_t > 0$; otherwise, there is nothing to prove. Then Lemma \ref{GapEst} implies that
\begin{align*}
    \max_M u_t &= tf_t(\bar{x}) = t(f_t(\bar{x})-f_s(\bar{x})) + tf_s(\bar{x})\\
    &\leq \frac{s-t}{s}\max u_s + \frac{t}{s}u_s(\bar{x})\\
    &\leq \max_M u_s.
\end{align*}

The result (2) follows analogously.
\end{proof}

\begin{proof}[Proof of Proposition \ref{rigidity theorem}]
It follows from Corollary \ref{Monotonicity of tips} that $\sup_M |u_s|$ is increasing in $s$. Therefore, $\sup_M |u_s|$ converges to zero as $s \to 0^+$ if one sequence does. 
\end{proof}

\subsection{Topology of blowup region with trivial capillary blowdown limit}
The main theorem of this subsection asserts that when the dominant energy condition holds strictly for the initial data set, if a capillary blowdown limit of $f_s$ is trivial in some blowup region $\Om$, then $\Om$ has rather simple topology.
\begin{thm}\label{thm: topological condition}
Suppose the dominant energy condition holds strictly, i.e. $\mu - |J|_g >0$. Let $u$ be a capillary blowdown limit of $f_s$ and $\Om$ be  a connected component of $\Om_+$ or $\Om_-$ with boundary components $\Si_1, \ldots, \Si_l$. Suppose $u = 0$ in $\Om$. Then the compactification $\Om\cup \{P_1,\ldots, P_l\}$ by adding a point to each boundary component is homeophorphic to a connected sum of finite number of spherical space forms $S^3/\Ga$ and $S^2\times S^1$. 
\end{thm}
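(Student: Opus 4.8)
The plan is to reduce the statement to the model case of a single maximal domain of a Jang solution, to run the Schoen--Yau conformal/stability argument there (now on a manifold with cylindrical ends in place of asymptotically flat ends), and then to assemble the pieces by a gluing lemma.

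\emph{Reduction via the Structure Theorem.} Since $u\equiv 0$ on $\Om$, every constant expansion surface inside $\Om$ has expansion $0$, i.e.\ is an apparent horizon, and every maximal domain furnished by the Structure Theorem \ref{thm: structure theorem} carries a solution of $\rH(f)-\rK(f)=0$, i.e.\ of the Jang equation. Thus (assuming, as in Theorem \ref{thm: structure theorem}, that compact subsets of $M$ contain only finitely many marginally stable CES) $\bar\Om$ is a finite union of Jang maximal domains $U_1,\dots,U_{N_1}$ together with finitely many foliated collars $\Phi_n([0,b_n]\times\Si_n)\cong\Si_n\times[0,b_n]$. The collars, and by Proposition \ref{prop: thin domain} also the \emph{thin} $U_m$'s, are topological products $\Si\times[0,1]$ or $\Si\times\R$, hence contribute trivially to the connected-sum type of the compactification. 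So it suffices to treat one \emph{thick} maximal domain $U$ of a Jang solution $f$, to be compactified at the horizon components of $\pa U$ lying on $\pa\Om$ and glued to its neighbours along the remaining, interior, horizon components of $\pa U$.

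\emph{The stability inequality on the Jang graph.} Let $\bar{g}$ be the induced metric on $\gr(f)\subset M\times\R$; this is a noncompact $3$-manifold with one cylindrical end over each component of $\pa U$. I would invoke the Schoen--Yau scalar curvature identity: there is a vector field $X$ on $\gr(f)$, bounded in terms of the a priori estimates of Proposition \ref{Prop: LocalEst}, such that
\begin{align*}
	R_{\bar{g}} = 2\big(\mu - J(w)\big) + |h_{\gr(f)}-k|^2_{\bar{g}} + 2|X|^2_{\bar{g}} - 2\,\Div_{\bar{g}} X,
\end{align*}
where $w$ is the horizontal part of the downward unit normal of $\gr(f)$, with $|w|_g<1$. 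Since $\bar\Om$ is compact and the dominant energy condition is strict, $\mu-|J|_g\ge 4\de>0$ on a neighbourhood of $\bar\Om$; because $|w|_g<1$ one has $J(w)\le|J|_g$, so $\mu-J(w)\ge 4\de$ \emph{everywhere} on $\gr(f)$, including along the cylindrical ends. Multiplying by $\varphi^2$, integrating by parts over $\gr(f)$, and absorbing the divergence term by Young's inequality yields, for every $\varphi\in C^\infty_c(\gr(f))$,
\begin{align*}
	\int_{\gr(f)} 8|\bar{\D}\varphi|^2 + R_{\bar{g}}\varphi^2 \;\ge\; \int_{\gr(f)} 6|\bar{\D}\varphi|^2 + 8\de\,\varphi^2 \;>\;0,
\end{align*}
so the conformal Laplacian $-8\De_{\bar{g}}+R_{\bar{g}}$ has positive bottom of spectrum, with a uniform gap; that is, $\gr(f)$ is of positive Yamabe type in the noncompact sense.

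\emph{Capping, gluing, and conclusion.} Each cylindrical end of $\gr(f)$ is asymptotic to $\Si_i\times\R$ over a stable apparent horizon $\Si_i$; restricting the stability inequality \eqref{stability inequality 2} to $\Si_i$ with $\varphi\equiv 1$ and using strict DEC and Gauss--Bonnet forces $\chi(\Si_i)>0$, hence $\Si_i\cong S^2$. I would then apply the gluing lemma of this section to: (i) cap each cylindrical end over a horizon component $\Si_i\subset\pa\Om$ with a smooth $3$-ball carrying a metric of positive Yamabe type matching the cylinder at infinity --- this glues in exactly the point $P_i$; and (ii) glue $\gr(f)$ to the Jang graphs of the adjacent maximal domains and to the foliated collars $\Si_n\times[0,b_n]$ (which themselves carry $\la_1(-8\De+R)\ge 0$ metrics) along their matching cylindrical ends, preserving positivity of $\la_1(-8\De+R)$. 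Performed over all pieces, this produces a \emph{closed} $3$-manifold $\hat M$ homeomorphic to $\Om\cup\{P_1,\dots,P_l\}$ and carrying a metric of positive Yamabe type; after checking orientability (the horizons are two-sided; otherwise pass to the orientation double cover), the classification of closed orientable $3$-manifolds of positive Yamabe type --- connected sums of spherical space forms $S^3/\Ga$ and copies of $S^2\times S^1$ --- completes the proof.

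\emph{Main obstacle.} The crux is the gluing lemma in Step 3: one must control the conformal Laplacian (equivalently, produce a metric of positive Yamabe type) through the gluing of long cylindrical necks over the horizon cross-sections and through the capping by $3$-balls, using only the one-sided data --- positive spectral gap and cylindrical asymptotics --- carried by each piece; and one must verify that the abstractly assembled closed manifold $\hat M$ is genuinely homeomorphic to $\Om\cup\{P_i\}$. The bookkeeping that thin maximal domains and foliated collars are topological products (so that the connected-sum type is governed only by the finitely many thick maximal domains), the sphere theorem for the horizons, and the reduction of the general case to the model case are the remaining, more routine, ingredients.
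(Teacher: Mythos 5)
Your proposal follows the same overall strategy as the paper: reduce via the Structure Theorem to thick maximal domains of Jang solutions, run the Schoen--Yau rearrangement of the stability identity on the Jang graph to get positivity of the conformal Laplacian, show the horizon cross-sections are spheres by Gauss--Bonnet, cap the ends, and invoke the classification of closed orientable Yamabe-positive 3-manifolds (Proposition \ref{prop: classification of Yamabe positive 3-manifold}). The ingredients are the same, but the final assembly differs in a way worth noting. The paper applies Proposition \ref{prop: maximal domain is Yamabe positive} to \emph{each} thick maximal domain $U_m$ in isolation, capping \emph{every} cylindrical end of $\gr(f|_{U_m})$ by a 3-ball to obtain a closed Yamabe-positive manifold $\hat U_m$, classifies each $\hat U_m$ separately, and then observes that the thin domains and foliated collars are $S^2\times[0,1]$ necks, so $\Om\cup\{P_i\}$ is a connected sum of the $\hat U_m$'s and the classification is inherited. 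You instead propose to glue all the Jang graphs, collars, and caps into a \emph{single} closed manifold $\hat M$ carrying one global Yamabe-positive metric, and classify once. That route is sound in principle but buys nothing here and costs more: you must propagate the spectral gap of $-8\De+R$ through the gluing at \emph{interior} (shared) horizon components, where you only control each side's metric up to $C^{2,\alpha}$-cylindrical asymptotics and where the collar pieces $\Phi_n([0,b_n]\times\Si_n)$ do not come equipped with a warped-product metric matching the hypotheses of the gluing lemma (Lemma \ref{lem: gluing}); the paper's piecewise route avoids this entirely because connected sum is topologically associative. You correctly flag this as the crux of your plan. One further, minor, discrepancy: you invoke the Structure Theorem together with its standing hypothesis that compact subsets of $M$ contain only finitely many marginally stable CES, whereas the paper's proof of this theorem is arranged precisely so as \emph{not} to rely on that hypothesis --- it notes that only finitely many $U_m$'s can be thick (since $\Om$ is bounded and thin is an open condition forced by Proposition \ref{prop: thin domain}), and that the possibly infinite list of thin domains and foliations are all $S^2\times\R$ cylinders that contribute trivially to the connected-sum type.
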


We begin with the model case where the entire blowup region $\Om = U$ is one maximal domain of a solution $f$ to Jang equation.
\begin{prop}\label{prop: maximal domain is Yamabe positive}
Let $U$ be a bounded maximal domain of solution $f$ to Jang equation with boundary components $\{\Si_1, \ldots, \Si_l\}$. Suppose the dominant energy condition holds strictly, i.e. $\mu - |J|_g \geq \de$ for some $\de > 0$. Then every boundary component of $U$ is a 2-sphere and the compactification $U\cup \{P_1,\ldots, P_l\}$ by adding a point to each boundary component is homeomorphic to a smooth manifold of positive Yamabe type, i.e. the manifold admits a metric such that the scalar curvature is positive.
\end{prop}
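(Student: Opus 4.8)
The plan is to transplant the Schoen--Yau scalar curvature computation for the Jang graph to the maximal domain $U$, to read off from strict dominant energy a positive lower bound for the bottom of the spectrum of the conformal Laplacian on that graph, and then to compactify the (cylindrical) ends of the graph with positive scalar curvature caps. First, by Theorem~\ref{thm:interior of level set} with $\Th=0$ and Proposition~\ref{prop: SY Jang equation}, $\pa U=\Si_1\sqcup\cdots\sqcup\Si_l$ is a disjoint union of closed embedded apparent horizons, $f\to\pm\infty$ on approach to $\pa U$, and $\gr(f-a,U)$ converges in $C^{2,\al}_{loc}$ to the product cylinder $\Si_i\times\R$ as $a\to\pm\infty$. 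Thus $\Si:=\overline{\gr(f,U)}\subset M\times\R$, equipped with the induced metric $\bar g=g+df\otimes df$, is a complete $3$-manifold whose ends $E_i$, after reparametrising the vertical coordinate by $\bar g$-arclength, are asymptotic with exponentially decaying error to the Riemannian products $(\Si_i\times[0,\infty),\,g_{\Si_i}+dt^2)$.

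Next I would invoke the Schoen--Yau identity for the Jang graph (section~4 of \cite{SY2}): there is a vector field $Y$ on $\Si$ with $|Y|_{\bar g}$ uniformly bounded such that
\begin{align*}
    R_{\bar g}\;=\;2\big(\mu-J(w)\big)+|h_\Si-k|_{\bar g}^2+2|Y|_{\bar g}^2-2\,\Div_{\bar g}(Y),
\end{align*}
where $w=\D f/\sqrt{1+|\D f|^2}$ and $h_\Si$ is the second fundamental form of $\Si$ in $M\times\R$; since $|w|_g<1$, strict dominant energy gives $\mu-J(w)\ge\mu-|J|_g\ge\de>0$ pointwise on $\Si$. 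Multiplying by $\varphi^2$ with $\varphi\in C_c^\infty(\Si)$, integrating, discarding the squared terms and applying $4\varphi\langle Y,\D\varphi\rangle_{\bar g}\ge-2|Y|_{\bar g}^2\varphi^2-2|\D\varphi|_{\bar g}^2$ to the divergence term yields
\begin{align*}
    \int_\Si\big(8|\D\varphi|_{\bar g}^2+R_{\bar g}\varphi^2\big)\;\ge\;\int_\Si\big(2|\D\varphi|_{\bar g}^2+R_{\bar g}\varphi^2\big)\;\ge\;2\de\int_\Si\varphi^2,
\end{align*}
so the conformal Laplacian $L_{\bar g}=-8\De_{\bar g}+R_{\bar g}$ is uniformly positive. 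Testing this with $\varphi(y,t)=\psi(y)\eta_T(t)$ supported in a long collar $\Si_i\times[T,2T]$ of $E_i$, dividing by $\int\eta_T^2$ and letting $T\to\infty$ (using $R_{\bar g}\to R_{\Si_i}$ along $E_i$, the scalar curvature of the cross-section) gives $\int_{\Si_i}(8|\D\psi|^2+R_{\Si_i}\psi^2)\ge2\de\int_{\Si_i}\psi^2$; with $\psi\equiv1$ and Gauss--Bonnet, $\int_{\Si_i}R_{\Si_i}=4\pi\chi(\Si_i)>0$, so each closed (two-sided, hence orientable) surface $\Si_i$ is a $2$-sphere. Consequently $\hat U:=\bar U/(\Si_i\sim P_i)$ is a closed topological $3$-manifold, homeomorphic to the closed manifold $\hat\Si$ obtained from $(\Si,\bar g)$ by truncating each end $E_i$ at a large height and filling in a $3$-ball.

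It remains to produce a positive scalar curvature metric on $\hat\Si$. Using the positivity of $L_{\bar g}$ I would construct a positive supersolution $\phi>0$ of $L_{\bar g}\phi\ge0$ that decays like $e^{-\al_i t}$ along each end (on the model cylinder $\phi=e^{-\al_i t}\psi_1^{(i)}$ with $\psi_1^{(i)}>0$ the principal eigenfunction of $-8\De_{\Si_i}+R_{\Si_i}$ and $8\al_i^2$ no larger than its positive principal eigenvalue works; a global $\phi$ comes from an exhaustion together with Harnack). Then $\bar g_1:=\phi^4\bar g$ has $R_{\bar g_1}=\phi^{-5}L_{\bar g}\phi\ge0$ and each end of $(\Si,\bar g_1)$ has finite length, being asymptotically conical over $\Si_i$. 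Finally I would apply the gluing lemma announced in the introduction (in the spirit of \cite{SY2}): cut each such end and attach a standard positive scalar curvature cap on the $3$-ball, smoothly interpolating over a short collar on which $\bar g_1$ is $C^2$-close to the model; because $\la_1$ of the conformal Laplacian is positive on the truncated $\Si$ (Dirichlet) and on each cap, and the gluing necks can be taken small, the resulting closed metric $\hat g$ on $\hat\Si$ satisfies $\la_1(-8\De_{\hat g}+R_{\hat g})>0$, which is equivalent to $\hat\Si\cong\hat U$ being of positive Yamabe type.

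The scalar curvature identity and the two integral inequalities are routine once imported from \cite{SY2} and the earlier sections; the real difficulty is the last step, passing from the \emph{complete, asymptotically cylindrical} Jang graph to a \emph{closed smooth} manifold. One must choose the conformal factor so that the ends shrink at exactly the rate needed to become fillable, keep the cross-sectional geometry of $\bar g_1$ under control along the gluing collars so that the standard caps attach, and prove that this surgery does not destroy positivity of the first eigenvalue of the conformal Laplacian; this is precisely the content of the gluing lemma, and stating and proving it carefully is where the work lies. A secondary point, already handled above, is the topological identification of $\hat\Si$ with $U\cup\{P_1,\dots,P_l\}$, which uses the sphere-ness of the $\Si_i$.
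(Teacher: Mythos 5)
Your first two stages — invoking the Schoen--Yau identity on the Jang graph, converting it into a stability inequality for the conformal Laplacian, and then using Gauss--Bonnet on the cylindrical ends to conclude that each $\Si_i$ is a $2$-sphere — match the paper's argument closely in substance, modulo cosmetic differences (the paper truncates at height $t_0$, perturbs $\bar g$ to an exact product cylinder near the truncation, and controls the boundary integrals via a trace theorem; you instead test with long-collar cutoffs and send the collar length to infinity). This part is sound.

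The compactification step is where you diverge from the paper and where there is a genuine gap. You propose to conformally shrink the ends by a decaying positive supersolution $\phi$ so that each end becomes finite-length and ``asymptotically conical over $\Si_i$,'' and then to ``attach a standard positive scalar curvature cap'' by interpolating over a short collar. The trouble is that the cross-sectional metric $\ga^{(i)}=g|_{\Si_i}$ (and likewise $\psi_1^4\ga^{(i)}$ after the conformal change) is not the round metric, so the tip of your cone is a genuine conical singularity and there is no ``standard'' cap that fits; one must explicitly deform the cross-section from $\ga^{(i)}$ to a round metric through a family that keeps the first eigenvalue of the $2$-dimensional conformal Laplacian bounded below, and this deformation is the entire technical content — not something one can defer to an unstated model. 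You acknowledge that ``stating and proving'' such a gluing lemma ``is where the work lies,'' but you do not carry it out and your setup does not reduce to it. Additionally, your final sentence infers positivity of $\la_1$ on the glued manifold from its positivity on the pieces without a splitting argument; this needs the Rayleigh quotient to decompose as in the paper's final estimate, which in turn relies on the uniform eigenvalue bound for the whole family of cross-sections.

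The paper resolves both issues simultaneously, and without your global conformal change. It attaches to each truncated cylindrical end an explicit warped-product cap $(S^2\times(0,3),\,\ga_s^{(i)}+ds^2)$ where $\ga_s^{(i)}=e^{2\eta(s)w_i+2a(s)}\ga_*$ interpolates from the flat punctured $3$-ball in spherical coordinates (so that $P_i$ is a smooth interior point) to the cylinder over $(\Si_i,\ga^{(i)})$; this is the Mantoulidis--Schoen trick cited as \cite{MS}, not \cite{SY2}. The paper's Lemma \ref{lem: gluing} then shows the first Neumann eigenvalue of the $3$-dimensional conformal Laplacian on the cap is bounded below by a quarter of the uniform lower bound $\la_*$ for $\la_1(-\De_{\ga_s^{(i)}}+\ka_{\ga_s^{(i)}})$, and this uniform bound is verified directly by splitting the bilinear form between the round piece and the piece controlled by the stability inequality (\ref{ineq: stability of cylinder}). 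Your decaying-supersolution conformal change is an interesting alternative idea, but as written it doesn't replace this cross-section deformation and eigenvalue-splitting argument; it would still need something equivalent to Lemma \ref{lem: gluing}.
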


\begin{rmk}\label{rmk: boundary is 2-sphere}
The claim that every boundary component of $U$ is a 2-sphere will follow the same argument of the statement that every boundary component of $\Om_0$ in Proposition \ref{prop: SY Jang equation} is a 2-sphere provided the strict dominant energy condition holds in \cite{SY2}.
\end{rmk}

To construct a compact smooth manifold out of $U$, we need the following gluing lemma to cap off the openings $\pa U$ by topological half 3-spheres. Note that the function $u$ in the conformal factor here no longer represents a blowdown limit. 
\begin{lem}\label{lem: gluing}
Suppose $(\Si, \ga_*)$ is a 2-dimensional compact manifold with or without boundary. Let $\ga_s(x) = e^{2w(x,s)} \ga_*(x)$ for $s\in (a,b)$ be a smooth path in the conformal class of $\ga_*$. Suppose for each $s\in (a,b)$ the first (Neumann if $\Si$ has boundary) eigenvalue of $2$-dimensional conformal Laplacian $\la_1(-\De_{\ga_s}+ \ka_{\ga_s})\geq \la_*$ for some $\la_*>0$ where $\ka_{\ga_s}$ is the Gaussian curvature of $\Si$ with respect to $\ga_s$. Suppose $w$ satisfies the boundary condition
\begin{align}\label{boundary condition}
    \frac{d}{ds}\Big|_{s=a^+} e^{2w(s, x)} = \frac{d}{ds}\Big|_{s=b^-} e^{2w(x,s)} =0 \quad \mbox{for all $x\in \Si$},
\end{align}
and 
\begin{align*}
	\sup_{s\in (a,b), x\in \Si}\, \Big|\frac{d^2}{ds^2} e^{2w}\Big| < \infty.
\end{align*}
Then the first Neumann eigenvalue of 3-dimensional conformal Laplacian $\la_1(-\De_g + \frac{1}{8}\rR_{g})$ on the cylinder $\cC: = \Si\times (a,b)$ equipped with the warped product $g(x,s) = \ga_s(x) + ds^2$ is positive $(\geq \frac{\la_*}{4}>0)$.
\end{lem}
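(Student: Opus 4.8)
The plan is to estimate the Neumann Rayleigh quotient of $L_g := -\De_g + \tfrac18\rR_g$ directly. By the variational characterization, $\la_1(L_g)$ with Neumann conditions is the infimum of $\mathcal{Q}(\phi)\big/\!\int_\cC\phi^2\,dV_g$ over nonzero $\phi\in H^1(\cC)$, where $\mathcal{Q}(\phi) = \int_\cC\big(|\D_g\phi|^2 + \tfrac18\rR_g\phi^2\big)\,dV_g$; the hypothesis $\sup|\pa_s^2 e^{2w}| < \infty$ guarantees $\rR_g\in L^\infty(\cC)$, so $\mathcal{Q}$ is well defined on $H^1(\cC)$. Since $g = \ga_s + ds^2$ is a generalized cylinder, $dV_g = dV_{\ga_s}\,ds$ and $|\D_g\phi|^2 = |\D_{\ga_s}\phi|_{\ga_s}^2 + (\pa_s\phi)^2$, so the whole task reduces to a slicewise estimate in which the assumption on $\la_1(-\De_{\ga_s}+\ka_{\ga_s})$ can be used.

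First I would compute the scalar curvature. The slice $\Si\times\{s\}$ has unit normal $\pa_s$ and second fundamental form $B_s = \tfrac12\pa_s\ga_s = (\pa_s w)\ga_s$, which is umbilic because the family is conformal; hence $H_s = \tr_{\ga_s}B_s = 2\pa_s w$ and $|B_s|^2 = 2(\pa_s w)^2$. Combining the Gauss equation with the Riccati (Raychaudhuri) equation $\Ric_g(\pa_s,\pa_s) = -\pa_s H_s - |B_s|^2$ yields the standard warped-product formula $\rR_g = \rR_{\ga_s} - 2\pa_s H_s - H_s^2 - |B_s|^2 = 2\ka_{\ga_s} - 4\pa_s^2 w - 6(\pa_s w)^2$ (sign conventions are easily checked against $ds^2 + \sin^2 s\,\ga_{S^2}$ and $ds^2 + f^2\ga_*$). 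Thus $\tfrac18\rR_g = \tfrac14\ka_{\ga_s} - \tfrac12\pa_s^2 w - \tfrac34(\pa_s w)^2$.

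The decisive step is to move the $-\tfrac12(\pa_s^2 w)\phi^2$ contribution by integrating by parts in $s$. Using $dV_{\ga_s} = e^{2w}\,dV_{\ga_*}$, the boundary term $\big[(\pa_s w)\,\phi^2 e^{2w}\big]_{s=a}^{s=b}$ vanishes --- this is exactly where the hypothesis $\pa_s e^{2w}\big|_{s=a,b}=0$, equivalently $\pa_s w\big|_{s=a,b}=0$, enters, and note that it kills the boundary term for \emph{every} $\phi$, so no boundary condition on $\phi$ at $s=a,b$ is needed --- and one is left with $\int_\cC\big[(\pa_s w)\phi\,\pa_s\phi + (\pa_s w)^2\phi^2\big]\,dV_{\ga_s}\,ds$. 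Collecting the $s$-derivative terms of $\mathcal{Q}(\phi)$ then produces a perfect square:
\begin{align*}
(\pa_s\phi)^2 + (\pa_s w)\phi\,\pa_s\phi + \tfrac14(\pa_s w)^2\phi^2 \;=\; \big(\pa_s\phi + \tfrac12(\pa_s w)\phi\big)^2 \;\geq\; 0,
\end{align*}
which leaves $\mathcal{Q}(\phi) \geq \int_\cC\big(|\D_{\ga_s}\phi|^2 + \tfrac14\ka_{\ga_s}\phi^2\big)\,dV_{\ga_s}\,ds$.

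To finish, for a.e.\ $s$ I would apply the hypothesis to $\phi(\cdot,s)\in H^1(\Si)$ --- the ``Neumann if $\Si$ has boundary'' clause is precisely what makes $\int_\Si(|\D_{\ga_s}\phi|^2 + \ka_{\ga_s}\phi^2)\,dV_{\ga_s} \geq \la_*\int_\Si\phi^2\,dV_{\ga_s}$ valid for every $H^1(\Si)$ function --- and split $|\D_{\ga_s}\phi|^2 + \tfrac14\ka_{\ga_s}\phi^2 = \tfrac14(|\D_{\ga_s}\phi|^2 + \ka_{\ga_s}\phi^2) + \tfrac34|\D_{\ga_s}\phi|^2$; discarding the nonnegative Dirichlet term gives the slicewise bound $\geq \tfrac{\la_*}{4}\int_\Si\phi^2\,dV_{\ga_s}$, and integrating in $s$ yields $\mathcal{Q}(\phi) \geq \tfrac{\la_*}{4}\int_\cC\phi^2\,dV_g$, hence $\la_1(L_g) \geq \la_*/4 > 0$. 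Conceptually there is no real obstacle here; the routine care is bookkeeping the Gauss/Riccati signs in $\rR_g$ and justifying the integration by parts for general $\phi\in H^1(\cC)$ by density from smooth functions, using the $L^\infty$ bounds on $w, \pa_s w, \pa_s^2 w$ that the hypotheses supply. The one point that must fall into place is the completion of the square, which forces the precise matching between the warping terms in $\rR_g$ and the $(\pa_s\phi)^2$ term and is also why the final constant degrades to $\la_*/4$ rather than $\la_*$.
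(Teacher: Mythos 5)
Your proof is correct and follows essentially the same route as the paper: compute $\rR_g = 2\ka_{\ga_s} - 4 w'' - 6(w')^2$ for the warped product, integrate the $w''$ term by parts in $s$ (killing the boundary term with the hypothesis $\pa_s e^{2w}\big|_{s=a,b}=0$), complete the square to isolate a nonnegative term, keep $\tfrac14$ of the slicewise $2$-dimensional conformal Laplacian quadratic form, and apply the eigenvalue hypothesis fiberwise. The only differences are cosmetic (you phrase the perfect square as $(\pa_s\phi+\tfrac12 w'\phi)^2$ rather than $e^{-w/2}[(\phi e^{w/2})']^2$, and you add brief remarks on the Gauss/Riccati derivation of $\rR_g$ and on density for $H^1$ test functions, neither of which changes the argument).
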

\begin{proof}
Let $i_s: \Si \hookrightarrow \Si\times (a,b)$ denote the inclusion map $i_s(x) = (x, s)$ for $s\in (a,b)$ and $x\in \Si$. Then
\begin{align*}
    \int_{\cC} |d\phi|_g^2 dV_g &= \int_a^b\int_\Si \Big\{|i_s^*d\phi|_{\ga_s}^2 + |\phi'|^2 \Big\} dA_{\ga_s}ds\\
    &= \int_a^b\int_\Si |i_s^*d\phi|_{\ga_s}^2 dA_{\ga_s}ds + \int_a^b\int_\Si |\phi'|^2 e^{2w} dA_{\ga_*}ds
\end{align*}
where $\cdot'$ denotes $\frac{d}{ds}\cdot$. By direct computation, the scalar curvature $\rR_g$ of the warped product metric is
\begin{align*}
    \rR_g = 2\ka_{\ga_s} -4(w'') - 6(w')^2.
\end{align*}
Let $\phi\in C^1(\cC)$ be bounded. Then 
\begin{align*}
    &\quad \frac{1}{8}\int_\cC \rR_g \phi^2 dV_g \\
    &= \frac{1}{4} \int_a^b\int_\Si \ka_{\ga_s}\phi^2 dA_{\ga_s}ds + \int_a^b\int_\Si \Big\{ -\frac{1}{2}w''\phi^2 e^{2w}  - \frac{3}{4} (w')^2 \phi^2 e^{2w}\Big\} dA_{\ga_*}ds\\
    &= \frac{1}{4} \int_a^b\int_\Si \ka_{\ga_s}\phi^2 dA_{\ga_s}ds + \int_a^b\int_\Si \Big\{ w'\phi\phi'e^{2w}  + \frac{1}{4} (w')^2\phi^2 e^{2w}\Big\} dA_{\ga_*}ds.
\end{align*}
In the last equality, we integrate the second term by parts and use the boundary condition (\ref{boundary condition}). Putting above computations together gives
\begin{align*}
    &\quad \int_\cC \Big\{|d\phi|_g^2 + \frac{1}{8} \rR_g \phi^2 \Big\} dV_g \\
    &\geq \frac{1}{4}\int_a^b\int_\Si \Big\{|i_s^*d\phi|_{\ga_s}^2 + \ka_{\ga_s}\phi^2 \Big\} dA_{\ga_s}ds + \int_a^b\int_\Si e^{-\frac{w}{2}}[(\phi e^{\frac{w}{2}})']^2 dA_{\ga_s}ds\\
    &\geq \frac{1}{4} \int_a^b \la_1(-\De_{\ga_s}+ \ka_{\ga_s})\int_\Si \phi^2 dA_{\ga_s}ds\\
    &\geq \frac{\la_*}{4} \int_\cC \phi^2 dV_g.
\end{align*}
Consequently, $\la_1(-\De_g + \frac{1}{8}\rR_g)\geq \frac{\la_*}{4}>0$.
\end{proof}

\begin{rmk}
The boundary condition (\ref{boundary condition}) is weaker than the condition that $\lim_{s\to a^+} \rH_{i_s(\Si)} = \lim_{s\to b^-} \rH_{i_s(\Si)} = 0$ where $H_{i_s(\Si)}$ is the mean curvature of $i_s(\Si)$ in $\cC$ with respect to $\frac{\pa}{\pa s}$. In the following application of this lemma, we will take the cylinder $S^2\times  (0,b)$ to be a flat punctured 3-ball in spherical coordinate near the origin. In this case, $w(x,s) = \log s$ near $s=0$ and the mean curvature of sphere actually blows up near the origin, whereas $\frac{d}{ds} e^{2w}$ converges to zero near the origin.
\end{rmk}

\begin{proof}[Proof of Proposition \ref{prop: maximal domain is Yamabe positive}]
Let $G = \gr(f, U)\subset (M\times \R, g + dt^2)$ endowed with induced metric $\bar{g} = g + df\otimes df$. Observe that vertical translations generate a Jacobi vector field whose normal component is 
\begin{align*}
    \beta:= \lan -\pa t, \nu_G \ran = (1 + |Df|_g^2)^{-\frac{1}{2}}.
\end{align*}
Using identities $\cL_G \beta = 0$ and (\ref{op2}) we find 
\begin{align}\label{Schoen-Yau identity}
    2(\mu -J(\nu)) + |h - k|_{\bar{g}}^2 = -2\Div_G(\xi + \bar{\D}\log \beta ) - 2 |\xi + \bar{\D}\log \beta|_{\bar{g}}^2 + \rR_{\bar{g}}
\end{align}
where $\xi = \big(k(\nu,\cdot)^\sharp\big)^\top$. Choose $t_0>0$ sufficiently large to be determined. Let $\phi\in C^1(G)$. Multiplying (\ref{Schoen-Yau identity}) by $\phi^2$, integrating by parts and using the pointwise Cauchy-Schwartz inequality
\begin{align*}
    2\lan X ,\bar{\D}\phi\ran_{\bar{g}} \phi - |X|_{\bar{g}}^2\phi^2 \leq 2|X|_{\bar{g}}\,|d\phi|_{\bar{g}} |\phi| - |X|_{\bar{g}}^2\phi^2  \leq |d\phi|_{\bar{g}}^2,
\end{align*}
we find
\begin{align}\label{YP: ineq 1}
\begin{split}
    \quad \int_{G\cap \big(M\times (-t_0, t_0)\big)} 2(\mu -J(\nu))\phi^2dV_{\bar{g}} &\leq 2\int_{G\cap (\{\pm t_0\}\times M)} \phi^2 |\lan \xi + \bar{\D}\log \beta, \eta_\pm \ran_{\bar{g}}|dA_{\bar{g}} \\
    &\quad +\int_{G\cap \big(M\times (-t_0, t_0)\big)} 2|d\phi|_{\bar{g}}^2 +  \rR_{\bar{g}}\phi^2dV_{\bar{g}}
\end{split}
\end{align}
where $\eta_\pm = \pm\frac{\D f + |\D f|^2\pa_t}{|\D f|\sqrt{1+ |\D f|^2}}$ is the conormal on the section $G\cap (M\times \{\pm t_0\})$ pointing out of $G\cap \big(M\times (-t_0, t_0)\big)$ and $dA_{\bar{g}}$ is the area element induced by $\bar{g}|_{G\cap (M\times \{\pm t_0\})}$. Translating $G$ vertically as in Proposition \ref{prop: SY Jang equation} (2), $G$ has infinity ends that are $C^{2,\al}$-asymptotic to $(\pa U \times \R)$. Therefore, $G\cap (M \times \{\pm t_0\})$ converges uniformly to $\pa U$ as $t_0\to +\infty$. Then the trace theorem implies that there exists constants $C$, $T>0$ depending only on geometry of $\pa U$ such that for all $t_0> T$
\begin{align*}
    \int_{G\cap \big(M\times \{\pm t_0\}\big)} \phi^2 dA_{\bar{g}} \leq C \int_{G\cap \big(M\times (-t_0, t_0)\big)} |d\phi|_{\bar{g}}^2 + \phi^2 dV_{\bar{g}}.
\end{align*}
Since $\lim_{t_0\to \infty}\eta_\pm = \pm \pa_t$ and $|\bar{\D}\log \beta|\leq c_4$ in Proposition \ref{Prop: LocalEst}, we have
\begin{align*}
    \lim_{t_0 \to \infty}\lan \xi + \bar{\D}\log \beta, \eta_\pm\ran_{\bar{g}} = k(\nu_{\pa U}, \pm\pa_t) \pm \pa_t \log \beta = 0.
\end{align*}
We also perturb $G$ to exact cylinders $\pa U\times \R$ with a new metric $\tilde{g} = g|_{\pa U} + dt^2$ for $t_0-1\leq |t|\leq t_0$ and keep $\tilde{g} = \bar{g}$ for $|t|\leq t_0-2$. By choosing $t_0>0$ large enough, the error term due to perturbation and the boundary integral in (\ref{YP: ineq 1}) are bounded by $\vare$ times $W^{1,2}$-norm of $\phi$ on $G\cap \big(M\times (-t_0, t_0)\big)$ for a very small $\vare>0$. By using the strong dominant energy condition $\mu - |J|_g \geq \de$, the inequality (\ref{YP: ineq 1}) implies
\begin{align}\label{ineq: stability of graph}
    \de \int_{G\cap \big(M\times (-t_0, t_0)\big)} \phi^2 dV_{\tilde{g}}\leq \int_{G\cap \big(M\times (-t_0, t_0)\big)} 3|d\phi|_{\tilde{g}}^2 +\rR_{\tilde{g}}\phi^2dV_{\tilde{g}}.
\end{align}

Let $\Si_i\subset \pa U$ be a connected component and let $\ga^{(i)} = g\big|_{\Si_i}$. By Proposition \ref{prop: stability for MOTS}, we know that $\Si_i$ is a closed stable apparent horizon. Following the same computation for (\ref{YP: ineq 1}) without the presence of boundary integral (since $\Si_i$ is closed), we have for any $\xi\in C^1(\Si_i)$, 
\begin{align}\label{ineq: stability of cylinder}
    \de \int_{\Si_i} \xi^2 dA_{\ga^{(i)}} \leq \int_{\Si_i} (\mu -J(\nu))\xi^2dA_{\ga^{(i)}} \leq \int_{\Si_i} |d\xi|_{\ga^{(i)}}^2 dA_{\ga^{(i)}} + \ka_{\ga^{(i)}}\xi^2 dA_{\ga^{(i)}}.
\end{align}
It follows that the first eigenvalue of 2-dimensional conformal Laplacian on $(\Si_i,\ga^{(i)})$ is positive. Taking $\xi \equiv 1$, we find
\begin{align*}
    0< \int_{\Si_i} \ka_{\ga^{(i)}} dV_{\ga^{(i)}}.
\end{align*}
By Gauss-Bonnet theorem, $\Si_i$ is homeomorphic to $S^2$.

Next we will fill up the opening of $G\cap \big(M\times (-t_0, t_0)\big)$ by gluing a 3-ball to obtain a closed manifold homeomorphic to $U\cup \{P_1, \ldots, P_l\}$ using the trick of path of conformal metrics in \cite{MS}. Recall that each $\Si_i$ is homeomorphic to $S^2$. By abuse of notation, we will identify $\Si_i$ as $S^2$ equipped with metric $\ga^{(i)}$ in the following discussion. By uniformization theorem, there exists $w_i\in C^\infty(S^2)$ such that $\ga^{(i)} = e^{2w_i}\ga_*$ where $\ga_*$ is the standard round metric on $S^2$. Let $\eta(s)$ and $a(s)$ be smooth functions on $(0,3)$ such that $0\leq\eta(s)\leq 1$ for all $s\in (0,3)$,
\begin{align*}
    \eta(s) = \left\{
    \begin{array}{ccc}
        0 & , &\mbox{if $s\in (0,1]$}, \\
        1 & , &\mbox{if $s\in [2,3)$},
    \end{array}\right.
\end{align*}
and $a(s)\leq 0$ for all $s\in (0,3)$,
\begin{align*}
    a(s) = \left\{
    \begin{array}{ccc}
        \log s & , &\mbox{if $s\in (0,\frac{1}{2}]$}, \\
        0 & , &\mbox{if $s\in [2,3)$}.
    \end{array}\right.
\end{align*}
Set $\ga_s^{(i)}(x) = e^{2\eta(s)w_i(x) + 2a(s)}\ga_*(x)$ so that $\ga_s^{(i)} = \ga^{(i)}$ for $s\in [2, 3)$. Then the cylinder $\cC_i:= S^2\times (0,3)$ equipped with the warped product $\ga_s^{(i)} + ds^2$ coincides with flat punctured 3-ball in spherical coordinates for $s\in (0,\frac{1}{2})$, and coincides with $(G\cap \big(\Si_i\times (-t_0, t_0)\big), \tilde{g})$ for $s\in (2,3)$ with the orientation $\pa_s$ pointing into $G\cap \big(M\times (-t_0, t_0)\big)$. In such a way, we can patch up the opening by gluing a 3-ball $\cC_i\cup {P_i}$ where $P_i$ is the origin in spherical coordinates. We repeat the surgery at other cylindrical ends and then we obtain a new smooth closed manifold $(\hat{M}, \hat{g})$ which is homeomorphic to $G\cup\{P_1, \ldots, P_l\}\cong U\cup\{P_1, \ldots, P_l\}$.

To complete the proof, we need to show that $(\hat{M}, \hat{g})$ is of positive Yamabe type. It suffices to show that $\la_1(-\De_{\hat{g}}+\frac{1}{8}\rR_{\hat{g}})$ is positive, since this implies that there exists a smooth positive eigenfunction $u$ of $-\De_{\hat{g}}+\frac{1}{8}\rR_{\hat{g}}$ on $\hat{M}$ such that 
\begin{align*}
    \rR_{u^4\hat{g}} = 8u^{-5}(-\De_{\hat{g}}u+\frac{1}{8}\rR_{\hat{g}} u) = 8u^{-4}\la_1(-\De_{\hat{g}}+\frac{1}{8}\rR_{\hat{g}})>0,
\end{align*}
and therefore $\hat{M}$ admits a metric $u^4\hat{g}$ with positive scalar curvature.
Let $\phi\in C^1(\hat{M})$. The relevant bilinear form can be split as the sum of integrals on several portions
\begin{align*}
    \int_{\hat{M}} |d\phi|_{\hat{g}}^2 + \frac{1}{8} \rR_{\hat{g}}\phi^2 dV_{\hat{g}} 
    = \sum_{i=1}^\ell\int_{\cC_i\cup P_i} + \int_{\hat{M}\bsls \bigcup_j(\cC_j\cup P_j)} |d\phi|_{\hat{g}}^2 + \frac{1}{8} \rR_{\hat{g}}\phi^2 dV_{\hat{g}}.
\end{align*}
For each integral on $\cC_i\cup P_i$, we will use Lemma \ref{lem: gluing} to get a positive lower bound. By definition of $\ga_s^{(i)}$, it is clear that the conditions for Lemma \ref{lem: gluing}  
$\frac{d}{ds}\big|_{s=0^+} e^{2(\eta w_i + a)} = \frac{d}{ds}\big|_{s=3^-} e^{2(\eta w_i + a)} = 0$ and $\sup_{C_i} \big|\frac{d^2}{ds^2}e^{2(\eta w_i + a)}\big|< \infty$ hold true. Recall for any $\varphi\in C^\infty(S^2)$ the Gaussian curvature of conformal metric $e^{2\varphi}\ga_*$ on $S^2$ is given by
\begin{equation}\label{conformal Gaussian curvature}
	\ka_{e^{2\varphi}\ga_*} = e^{-\varphi}\Big(\ka_{\ga_*}- \De_{\ga_*} \varphi \Big).
\end{equation}
Let $\xi\in C^\infty(S^2)$. Using (\ref{conformal Gaussian curvature}), for $s\in (0,3)$ the bilinear form related to 2-dimensional conformal Laplacian on $(S^2, \ga_s^{(i)})$ can be rewritten as
\begin{align*}
    &\quad  \int_{S^2} |d\xi|_{\ga_s^{(i)}}^2 + \ka_{\ga_s^{(i)}}\xi^2 dA_{\ga_s^{(i)}}\\
    &= \int_{S^2} |d\xi|_{\ga_*}^2 + \big\{\ka_{\ga_*}- \De_{\ga_*} \big(\eta(s) w_i(x)+ a(s)\big)\big\}\xi^2 dA_{\ga_*}\\
    &= \int_{S^2} |d\xi|_{\ga_*}^2 + \big(1 - \eta(s) \De_{\ga_*}  w_i\big)\xi^2 dA_{\ga_*}\\
    &= \eta(s)\int_{S^2}  \Big\{ |d\xi|_{\ga_*}^2 + \big(1 - \De_{\ga_*} w_i(x)\big)\xi^2 \Big\} dA_{\ga_*} + (1-\eta(s)) \int_{S^2} \Big\{ |d\xi|_{\ga_*}^2 +  \xi^2\Big\} dA_{\ga_*}\\
    &=: \mathrm{I} + \mathrm{II}.
\end{align*}
To estimate I, we use (\ref{conformal Gaussian curvature}) and (\ref{ineq: stability of cylinder}) to obtain
\begin{align*}
	\mathrm{I} &= \eta(s)\int_{S^2}  \Big\{ |d\xi|_{\ga_*}^2 + \big(1 - \De_{\ga_*} w_i(x)\big)\xi^2 \Big\} dA_{\ga^{(i)}}\\
    	&= \eta(s)\int_{S^2}  \Big\{ |d\xi|_{\ga^{(i)}}^2 + \ka_{\ga^{(i)}}\xi^2 \Big\} dA_{\ga^{(i)}} \\
    &\geq \eta(s)\de\int_{S^2} \xi^2 dA_{\ga^{(i)}}\\
    &\geq \eta(s)\de \inf_{\cC_i} e^{2(1-\eta)w_i - 2a} \int_{S^2} \xi^2 dA_{\ga_s^{(i)}}.
\end{align*}
To estimate II, we use the fact that $\la_1(-\De_{\ga_*})=2$ to obtain
\begin{align*}
	&\quad (1-\eta(s)) \int_{S^2} \Big\{ |d\xi|_{\ga_*}^2 +  \xi^2\Big\} dA_{\ga_*}\\
    	&\geq 3(1-\eta(s))\int_{S^2} \xi^2 dA_{\ga_*}\\
    &\geq  3(1-\eta(s))\inf_{\cC_i} e^{-2\eta w_i - 2a} \int_{S^2} \xi^2 dA_{\ga_s^{(i)}}.
\end{align*}
Then we can conclude that
\begin{align*}
	\int_{S^2} |d\xi|_{\ga_s^{(i)}}^2 + \ka_{\ga_s^{(i)}}\xi^2 dA_{\ga_s^{(i)}}\geq \big( \eta(s)\de \inf_{\cC_i} e^{2(1-\eta)w_i - 2a} + 3(1-\eta)\inf_{\cC_i} e^{-2\eta w_i - 2a}\big) \int_{S^2} \xi^2 dA_{\ga_s^{(i)}}.
\end{align*}
Since $a\leq 0$ and $0\leq \eta\leq 1$, the coefficient of the integral on the right is positive for all $s\in (0, 3)$. It follows that there exists $\la_*>0$ such that $\la_1(-\De_{\ga_s^{(i)}} + \ka_{\ga_s^{(i)}})\geq \la_*$ for all $s\in (0,3)$. We repeat the argument on all $\cC_i\cup P_i$'s and we may assume $\la_*$ is a lower bound of $\la_1(-\De_{\ga_s^{(i)}} + \ka_{\ga_s^{(i)}})$ for all $\cC_i\cup P_i$'s. Using Lemma \ref{lem: gluing}, we conclude that
\begin{align*}
    \sum_{i=1}^\ell\int_{\cC_i\cup P_i} |d\phi|_{\hat{g}}^2 + \frac{1}{8} \rR_{\hat{g}}\phi^2 dV_{\hat{g}} \geq \frac{\la_*}{4} \sum_{i=1}^\ell \int_{\cC_i\cup P_i} \phi^2 dV_{\hat{g}}.
\end{align*}
From (\ref{ineq: stability of graph}), we find
\begin{align*}
    \int_{\hat{M}\bsls \bigcup_j(\cC_j\cup P_j)} |d\phi|_{\hat{g}}^2 + \frac{1}{8} \rR_{\hat{g}}\phi^2 dV_{\hat{g}} \geq \frac{1}{8}\int_{\hat{M}\bsls \bigcup_j(\cC_j\cup P_j)} 3|d\phi|_{\hat{g}}^2 +  \rR_{\hat{g}}\phi^2 dV_{\hat{g}}\geq \frac{1}{8}\de \int_{\hat{M}\bsls \bigcup_j(\cC_j\cup P_j)} \phi^2 dV_{\hat{g}}.
\end{align*}
Putting all together, we conclude that there exists $\al = \al(\la_*, \de)>0$ such that
\begin{align*}
    \int_{\hat{M}} |d\phi|_{\hat{g}}^2 + \frac{1}{8} \rR_{\hat{g}}\phi^2 dV_{\hat{g}} \geq \al\int_{\hat{M}} \phi^2 dV_{\hat{g}}.
\end{align*}
\end{proof}

The following remarkable theorem classifies the topology of connected, orientable, closed, Yamabe-positive 3-manifolds.
\begin{prop}[Gromov-Lawson, Schoen-Yau cf.\cite{C} Theorem 2.1]\label{prop: classification of Yamabe positive 3-manifold}
Let $X^3$ be a connected, orientable, compact manifold without boundary with positive Yamabe type. Then $X$ is homeomorphic to a connected sum of finite number of spherical space forms $S^3/\Ga$, where $\Ga$ is a finite subgroup of $SO(4)$ acting freely on $S^3$, and $S^2\times S^1$.
\end{prop}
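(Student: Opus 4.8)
The plan is to deduce this statement from three imported inputs -- the Kneser--Milnor prime decomposition, the Schoen--Yau/Gromov--Lawson stable-minimal-surface obstruction to positive scalar curvature, and Perelman's resolution of the geometrization, Poincar\'e and elliptization conjectures -- rather than to reprove any of them. First I would record that positive Yamabe type is equivalent to the existence of an honest positive scalar curvature (PSC) metric on $X$: if $\la_1(-\De_g + \tfrac18 \rR_g)>0$ for some $g$ in the conformal class, let $u>0$ be the first eigenfunction; then $\rR_{u^4 g} = 8u^{-5}(-\De_g u + \tfrac18 \rR_g u) = 8\la_1 u^{-4} > 0$. Fix such a PSC metric $g$ and write the prime decomposition $X = P_1 \# \cdots \# P_k$ with each $P_i$ closed, orientable and prime. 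A prime orientable closed $3$-manifold is either $S^2\times S^1$ or irreducible; for an irreducible $P_i$ the Sphere Theorem gives $\pi_2(P_i)=0$, so, passing to the universal cover, $P_i$ is either aspherical (a $K(\pi,1)$, when $\pi_1$ is infinite) or has finite fundamental group.

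Next I would rule out aspherical prime factors. Suppose some $P_i$ is aspherical. By geometrization, $P_i$ either contains a two-sided incompressible torus (the $\mathrm{Sol}$ case and every manifold with nontrivial JSJ splitting), or is Seifert-fibred over a Euclidean or hyperbolic $2$-orbifold, or is hyperbolic. In the first case the torus $T\subset P_i\setminus B^3\subset X$ is still incompressible in $X$, since $\pi_1(T)\hookrightarrow\pi_1(P_i)\hookrightarrow\pi_1(X)$; minimizing area in the free homotopy class of the inclusion produces (Schoen--Yau, Sacks--Uhlenbeck) a stable minimal surface $T_0$ of genus $\geq 1$ by $\pi_1$-injectivity, and the stability inequality with test function $\varphi\equiv 1$, combined with $2\Ric(\nu,\nu)=\rR_g-2K_{T_0}+|A|^2$ for a minimal surface, gives $\int_{T_0}(\tfrac32|A|^2+\tfrac12\rR_g)\le\int_{T_0}K_{T_0}=2\pi\chi(T_0)\le 0$, contradicting $\rR_g>0$. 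The remaining geometric types are handled either by passing to a finite cover of $X$ (still PSC) containing a circle bundle over a positive-genus surface, which again produces an incompressible torus, or by Gromov--Lawson enlargeability of nonpositively curved and hyperbolic $3$-manifolds, which obstructs PSC directly. Hence no $P_i$ is aspherical.

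It follows that each $P_i$ is either $S^2\times S^1$ or irreducible with finite fundamental group $\Ga_i:=\pi_1(P_i)$. In the latter case the universal cover $\widetilde{P_i}$ is a closed simply connected $3$-manifold, hence a homotopy $3$-sphere, hence $S^3$ by the Poincar\'e Conjecture (Perelman); therefore $P_i\cong S^3/\Ga_i$ with $\Ga_i$ acting freely, and by the Elliptization (Spherical Space Form) Theorem the action is conjugate to an orthogonal one, so $\Ga_i$ may be taken inside $SO(4)$. Reassembling the connected sum gives $X\cong(\#_a S^3/\Ga_a)\,\#\,(\#_b\, S^2\times S^1)$, as claimed. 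For completeness, the reverse implication -- every such connected sum has positive Yamabe type -- follows by equipping each $S^3/\Ga_a$ with the round metric, each $S^2\times S^1$ with a product of a thin round $S^2$ with $S^1$, and invoking the Gromov--Lawson surgery theorem that PSC is preserved under connected sum in dimension $\ge 3$.

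The main obstacle is that essentially all of the substance is imported: the argument above is only a bookkeeping scheme organizing the Schoen--Yau/Gromov--Lawson minimal-surface technology, Gromov--Lawson enlargeability, and -- unavoidably -- Perelman's Ricci-flow-with-surgery proof of geometrization, the Poincar\'e conjecture and elliptization. An alternative route that bypasses the case analysis over Thurston geometries is to run Ricci flow with surgery on $(X,g)$ directly: since $\tfrac{\pa}{\pa t}\rR_{\min}\ge\tfrac23\rR_{\min}^2$ the flow becomes extinct in finite time, and Perelman's description of the topology of finite-time-extinct flows identifies $X$ at once with a connected sum of spherical space forms and $S^2$-bundles over $S^1$, which in the orientable case are precisely the $S^2\times S^1$ summands. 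Either way there is no self-contained elementary proof; what the proof amounts to is the reduction just described together with citations to these theorems.
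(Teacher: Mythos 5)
The paper does not prove this proposition at all: it is stated as a known theorem and attributed to Gromov--Lawson and Schoen--Yau, with a pointer to Carlotto's survey \cite{C}, Theorem 2.1. So there is no ``paper's own proof'' to compare against; the author simply imports the classification as a black box and uses it (together with the structure theorem) to deduce Theorem \ref{thm: topological condition}. Your proposal is an honest reconstruction of the standard argument behind that citation, and it correctly identifies that the substance is Kneser--Milnor prime decomposition, the Schoen--Yau/Gromov--Lawson stable minimal surface obstruction (or enlargeability) to rule out aspherical primes, and Perelman's Poincar\'e and elliptization theorems to identify the finite-$\pi_1$ primes as $S^3/\Gamma$ with $\Gamma\subset SO(4)$. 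The alternative route you mention -- finite extinction time of Ricci flow with surgery on a PSC metric, giving the decomposition in one step -- is indeed the cleanest modern path and is essentially what \cite{C} records.

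One small slip worth flagging: for a minimal surface $T_0$ in a $3$-manifold, the traced Gauss equation reads $2\Ric(\nu,\nu)=\rR_g-2K_{T_0}-|A|^2$, not $+|A|^2$. Carrying this through the stability inequality with $\varphi\equiv 1$ gives $\int_{T_0}\big(\tfrac12|A|^2+\tfrac12\rR_g\big)\le 2\pi\chi(T_0)$, with coefficient $\tfrac12$ rather than your $\tfrac32$ on $|A|^2$. Since both coefficients are positive the contradiction with $\rR_g>0$ and $\chi(T_0)\le 0$ goes through unchanged, so this does not affect the conclusion, only the bookkeeping. With that corrected, your outline is a correct (citation-heavy, as is unavoidable) proof of the stated proposition.
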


Now we are ready to combine Proposition \ref{prop: classification of Yamabe positive 3-manifold} for the special case, the Classification Theorem \ref{prop: classification of Yamabe positive 3-manifold} together with the Structure Theorem \ref{thm: structure theorem} of blowup regions to prove Theorem \ref{thm: topological condition}.
\begin{proof}[Proof of Theorem \ref{thm: topological condition}.] Without the assumption that there are only finitely many closed smooth marginally stable CES in compact sets in $(M, g, k)$, the Structure Theorem \ref{thm: structure theorem} implies that
\begin{align*}
    \bar{\Om} = (\bigcup_{m = 1}^{N_1} U_m ) \cup (\bigcup_{n = 1}^{N_2} \Phi_n([0, b_n]\times \Si_n))
\end{align*}
where $1\leq N_1, N_2 \leq \infty$, $U_m$ is a maximal domain of solution to Jang equation for all $m$ and $\Phi_n$ is a smooth foliation of closed MOTS or MITS for all $n$. There may be infinitely many maximal domains $U_m$'s. But since the blowup region $\Om$ is bounded, all except finitely many $U_m$'s are \emph{thin} as defined in Proposition \ref{prop: thin domain}. Proposition \ref{prop: thin domain} implies that each \emph{thin} $U_m$ is homeomorphic to a cylinder over its boundary component and Proposition \ref{prop: maximal domain is Yamabe positive} implies that the boundary components of \emph{thin} $U_m$ are 2-spheres. Therefore, all \emph{thin} $U_m$'s are homeomorphic to round cylinder $S^2 \times \R$ and contribute nothing to the topological structure of entire connected sum. 

Every boundary component of $\Phi_n([0, b_n]\times \Si_n)$ is a connected component of $\pa U_m$ or $\pa \Om$ which is a 2-sphere by Proposition \ref{prop: maximal domain is Yamabe positive} and Remark \ref{rmk: boundary is 2-sphere}. Thus, each foliation $\Phi_n([0, b_n]\times \Si_n)$ is homeomorphic to a round cylinder $[0, b_n]\times S^2$ (which may degenerate to $\{0\}\times S^2$).

The main contributions to the topological structure of blowup region come from finitely many \emph{thick} maximal domains. Combining Proposition \ref{prop: maximal domain is Yamabe positive} and Proposition \ref{prop: classification of Yamabe positive 3-manifold}, the compactification of each \emph{thick} maximal domain $U_m$ by adding a point to each boundary component is homeomorhpic to a connected sum of finite number of spherical space forms $S^3/\Ga$ and $S^2\times S^1$. On the other hand, we may view thin maximal domains and foliations as cylindrical necks connecting finitely many \emph{thick} maximal domains in the entire connected sum. Consequently, the compactification $\Om\cup\{P_1, \ldots, P_l\}$ is homeomorphic to a connected sum of finite number of spherical space forms $S^3/\Ga$ and $S^2\times S^1$.
\end{proof}

\begin{cor}
Suppose the dominant energy condition holds strictly, i.e. $\mu > |J|$. Let $u$ be a capillary blowdown limit of $f_s$ and $\Om\subset \Om_+$ with boundary components $\Si_1, \ldots, \Si_l$. If the compactification $\Om\cup \{P_1,\ldots, P_l\}$ by adding a point to each boundary component is not homeophorphic to a connected sum of finite number of spherical space forms $S^3/G$ and $S^2\times S^1$, then $u$ is not trivial in $\Om$.
\end{cor}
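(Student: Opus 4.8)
The plan is to observe that this corollary is nothing more than the contrapositive of Theorem~\ref{thm: topological condition}. I would argue by contradiction: suppose that the conclusion fails, i.e.\ that $u$ is trivial in $\Om$ (meaning $u\equiv 0$ on $\Om$ in the sense discussed in Section~6). The hypotheses of Theorem~\ref{thm: topological condition} are then satisfied --- $(M,g,k)$ carries the strict dominant energy condition, $\Om$ is a connected component of $\Om_+$ (or $\Om_-$) with boundary components $\Si_1,\dots,\Si_l$, and $u=0$ in $\Om$ --- so that theorem applies verbatim and yields that the compactification $\Om\cup\{P_1,\dots,P_l\}$ is homeomorphic to a connected sum of finitely many spherical space forms $S^3/\Ga$ and copies of $S^2\times S^1$. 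This directly contradicts the standing hypothesis of the corollary, which asserts the negation of precisely this statement. Hence $u$ cannot be trivial in $\Om$.

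The only points worth spelling out are bookkeeping items rather than genuine obstacles. First, I would note that $\mu>|J|_g$ pointwise together with compactness of $\bar{\Om}$ (Proposition~\ref{prop: SY Jang equation}) gives a uniform gap $\mu-|J|_g\ge \de$ on $\bar{\Om}$ for some $\de>0$, which is the form of the strict dominant energy condition actually used in Proposition~\ref{prop: maximal domain is Yamabe positive} and in the proof of Theorem~\ref{thm: topological condition}. Second, I would recall that the proof of Theorem~\ref{thm: topological condition} does \emph{not} require the finiteness assumption on marginally stable CES in compact sets: it invokes the general form of the Structure Theorem, uses Proposition~\ref{prop: thin domain} to show all but finitely many maximal domains are thin round cylinders $S^2\times\R$, and builds the closed positive-Yamabe-type manifold from the finitely many thick maximal domains via Proposition~\ref{prop: maximal domain is Yamabe positive} and the gluing Lemma~\ref{lem: gluing}, concluding with the classification Proposition~\ref{prop: classification of Yamabe positive 3-manifold}. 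So nothing extra is needed here.

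Since the argument is a one-line logical transposition, there is no substantive obstacle; the ``hard part'' is entirely contained in Theorem~\ref{thm: topological condition} and its supporting results (Structure Theorem, thin-domain analysis, the construction of the Yamabe-positive metric, and the Gromov--Lawson/Schoen--Yau classification), all of which are already established above. I would therefore keep the proof to two or three sentences: assume $u\equiv 0$ on $\Om$, apply Theorem~\ref{thm: topological condition}, and contradict the hypothesis.
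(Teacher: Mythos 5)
Your proposal is correct: the corollary is exactly the contrapositive of Theorem~\ref{thm: topological condition}, and the paper accordingly gives no separate proof for it. Your two bookkeeping remarks (the uniform gap $\mu-|J|_g\ge\de$ on the compact set $\bar{\Om}$, and the fact that Theorem~\ref{thm: topological condition} already dispenses with the finiteness hypothesis of the Structure Theorem) are both accurate and faithfully track how the paper actually uses these results.
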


\begin{bibdiv}
\begin{biblist}

\bib{AEM}{article}{
   author={Andersson, Lars},
   author={Eichmair, Michael},
   author={Metzger, Jan},
   title={Jang's equation and its applications to marginally trapped surfaces},
   journal={J. Differential Equations},
   date={2008},
}

\bib{AFS}{article}{
   author={Azagra, Daniel},
   author={Ferrera, Juan},
   author={Sanz, Beatriz},
   title={Viscosity solutions to second order partial differential equations
   on Riemannian manifolds},
   journal={J. Differential Equations},
   volume={245},
   date={2008},
   number={2},
   pages={307--336},
   issn={0022-0396},
}

\bib{AM}{article}{
   author={Andersson, Lars},
   author={Metzger, Jan},
   title={The area of horizons and the trapped region},
   journal={Comm. Math. Phys.},
   volume={290},
   date={2009},
   number={3},
   pages={941--972},
   issn={0010-3616},
}

\bib{AMS}{article}{
   author={Andersson, Lars},
   author={Mars, Marc},
   author={Simon, Walter},
   title={Stability of marginally outer trapped surfaces and existence of
   marginally outer trapped tubes},
   journal={Adv. Theor. Math. Phys.},
   volume={12},
   date={2008},
   number={4},
   pages={853--888},
   issn={1095-0761},
}

\bib{BR}{article}{
   author={Berestycki, Henri},
   author={Rossi, Luca},
   title={Generalizations and properties of the principal eigenvalue of
   elliptic operators in unbounded domains},
   journal={Comm. Pure Appl. Math.},
   volume={68},
   date={2015},
   number={6},
   pages={1014--1065},
   issn={0010-3640},
}

\bib{BK1}{article}{
   author={Bray, Hubert L.},
   author={Khuri, Marcus A.},
   title={A Jang equation approach to the Penrose inequality},
   journal={Discrete Contin. Dyn. Syst.},
   volume={27},
   date={2010},
   number={2},
   pages={741--766},
   issn={1078-0947},
}

\bib{BK2}{article}{
   author={Bray, Hubert L.},
   author={Khuri, Marcus A.},
   title={P.D.E.'s which imply the Penrose conjecture},
   journal={Asian J. Math.},
   volume={15},
   date={2011},
   number={4},
   pages={557--610},
   issn={1093-6106},
}

\bib{C}{article}{
   author={Carlotto, Alessandro},
   title={A survey on positive scalar curvature metrics},
   journal={Boll. Unione Mat. Ital.},
   volume={14},
   date={2021},
   number={1},
   pages={17--42},
   issn={1972-6724},
}

\bib{E1}{article}{
   author={Eichmair, Michael},
   title={The Plateau problem for marginally outer trapped surfaces},
   journal={J. Differential Geom.},
   volume={83},
   date={2009},
   number={3},
   pages={551--583},
   issn={0022-040X},
}

\bib{E2}{article}{
   author={Eichmair, Michael},
   title={The Jang equation reduction of the spacetime positive energy
   theorem in dimensions less than eight},
   journal={Comm. Math. Phys.},
   volume={319},
   date={2013},
   number={3},
   pages={575--593},
   issn={0010-3616},
}

\bib{ES}{article}{
   author={Evans, L. C.},
   author={Spruck, J.},
   title={Motion of level sets by mean curvature. I},
   journal={J. Differential Geom.},
   volume={33},
   date={1991},
   number={3},
   pages={635--681},
   issn={0022-040X},
}

\bib{HS}{article}{
   author={Hoffman, David},
   author={Spruck, Joel},
   title={Sobolev and isoperimetric inequalities for Riemannian
   submanifolds},
   journal={Comm. Pure Appl. Math.},
   volume={27},
   date={1974},
   pages={715--727},
   issn={0010-3640},
}

\bib{Ga}{article}{
   author={Galloway, Gregory J.},
   title={Rigidity of marginally trapped surfaces and the topology of black
   holes},
   journal={Comm. Anal. Geom.},
   volume={16},
   date={2008},
   number={1},
   pages={217--229},
   issn={1019-8385},
}

\bib{G}{article}{
   author={Giusti, Enrico},
   title={On the equation of surfaces of prescribed mean curvature.
   Existence and uniqueness without boundary conditions},
   journal={Invent. Math.},
   volume={46},
   date={1978},
   number={2},
   pages={111--137},
   issn={0020-9910},
}

\bib{J}{article}{
   author={Jang, Pong Soo},
   title={Erratum: ``On the positivity of energy in general relativity'' (J.
   Math. Phys. {\bf 19} (1978), no. 5, 1152--1155)},
   journal={J. Math. Phys.},
   volume={20},
   date={1979},
   number={1},
   pages={217},
   issn={0022-2488},
}

\bib{Mo}{book}{
   author={Morrey, Charles B., Jr.},
   title={Multiple integrals in the calculus of variations},
   series={Die Grundlehren der mathematischen Wissenschaften, Band 130},
   publisher={Springer-Verlag New York, Inc., New York},
   date={1966},
   pages={ix+506},
}

\bib{M1}{article}{
   author={Metzger, Jan},
   title={Foliations of asymptotically flat 3-manifolds by 2-surfaces of
   prescribed mean curvature},
   journal={J. Differential Geom.},
   volume={77},
   date={2007},
   number={2},
   pages={201--236},
   issn={0022-040X},
}

\bib{MS}{article}{
   author={Mantoulidis, Christos},
   author={Schoen, Richard},
   title={On the Bartnik mass of apparent horizons},
   journal={Classical Quantum Gravity},
   volume={32},
   date={2015},
   number={20},
   pages={205002, 16},
   issn={0264-9381},
}

\bib{SSY}{article}{
   author={Schoen, Richard},
   author={Simon, Leon},   
   author={Yau, Shing Tung},
   title={Curvature estimates for minimal hypersurfaces},
   journal={Acta Math.},
   volume={134},
   date={1975},
   number={3-4},
   pages={275--288},
   issn={0001-5962},
}

\bib{SY1}{article}{
   author={Schoen, Richard},
   author={Yau, Shing Tung},
   title={On the proof of the positive mass conjecture in general
   relativity},
   journal={Comm. Math. Phys.},
   volume={65},
   date={1979},
   number={1},
   pages={45--76},
   issn={0010-3616},
}

\bib{SY1'}{article}{
   author={Schoen, Richard},
   author={Yau, Shing Tung},
   title={On the structure of manifolds with positive scalar curvature},
   journal={Manuscripta Math.},
   volume={28},
   date={1979},
   number={1-3},
   pages={159--183},
   issn={0025-2611},
}

\bib{SY2}{article}{
   author={Schoen, Richard},
   author={Yau, Shing Tung},
   title={Proof of the positive mass theorem. II},
   journal={Comm. Math. Phys.},
   volume={79},
   date={1981},
   number={2},
   pages={231--260},
   issn={0010-3616},
}

\bib{SY3}{article}{
   author={Schoen, Richard},
   author={Yau, Shing Tung},
   title={Positive Scalar Curvature and Minimal Hypersurface Singularities},
   journal={arXiv:1704.05490},
   date={2017},
}

\end{biblist}
\end{bibdiv}

\end{document}